\newcommand{\eps}{\varepsilon}
\newcommand{\N}{\mathbb{N}}
\newcommand{\R}{\mathbb{R}}
\DeclareMathOperator{\diam}{diam}
\title{Recurrence rates for observations of flows}
\author{J\'er\^ome Rousseau}
\address{Universidade Federal de Alagoas, Instituto de Matem\'atica, 57072-900 Macei\'o, AL, Brasil}
\email{jerome.rousseau@univ-brest.fr}
\urladdr{http://pageperso.univ-brest.fr/~rousseau}
\date{\today}
\keywords{Poincar\'e recurrence, dimension theory, flow, decay of correlations,  geodesic flow}
\subjclass[2000]{Primary: 37C45, 37B20, 37C10; Secondary: 37A25, 37D, 37M25, 37D40}
\begin{document}
\newtheorem{lem}{Lemma}[section]
\newtheorem{lemma}[lem]{Lemma}
\newtheorem{theorem}[lem]{Theorem}
\newtheorem{defi}[lem]{Definition}
\newtheorem{prop}[lem]{Proposition}
\newtheorem{corollary}[lem]{Corollary}
\newtheorem{example}[lem]{Example}
\newtheorem*{remark}{Remark}
\newtheorem{definition}[lem]{Definition} 
\newtheorem{proposition}[lem]{Proposition}  

\maketitle

\begin{abstract}
We study Poincar\'e recurrence for flows and observations of flows. For Anosov flow, we prove that the recurrence rates are linked to the local dimension of the invariant measure. More generally, we give for the recurrence rates for the observation an upper bound depending on the push-forward measure. When the flow is metrically isomorphic to a suspension flow for which the dynamic on the base is rapidly mixing, we prove the existence of a lower bound for the recurrence rates for the observation. We apply these results to the geodesic flow and we compute the recurrence rates for a particular observation of the geodesic flow, i.e. the projection on the manifold.
\end{abstract}

\section{Introduction}
\subsection{Poincar\'e Recurrence}
One of the fundamental theorem at the origin of dynamical system and ergodic theory is the Poincar\'e Recurrence Theorem. It states that, in a dynamical system, almost every orbit returns as closely as you wish to the initial point. The time needed for a point to come back, called the return time, has been widely studied in the last few years for discrete dynamical systems (e.g. \cite{MR1211492,MR1833809,MR1997964,MR2152396,MR1835750,MR1996808}).

A noteworthy work on the quantitative study of Poincar\'e recurrence is the article of Boshernitzan \cite{MR1231839}. In this work, Boshernitzan throw the idea of studying Poincar\'e recurrence for observations of dynamical systems. Indeed, he proved that for a dynamical system $(X,T,\mu)$ and an observation $f$ from $X$ to a metric space $(Y,d)$, whenever the $\alpha$-dimensional Hausdorff measure is $\sigma$-finite on $Y$ we have
\begin{equation}\label{bosh}\liminf_{n\rightarrow\infty}n^{1/\alpha}d\left(f(x),f(T^nx)\right)<\infty\qquad\textrm{for $\mu$-almost every $x$}.\end{equation}

Following this idea and the work of \cite{MR1833809,MR2191396}, recurrence rates for the observation  have been linked to the pointwise dimensions of the push-forward measure \cite{prfo}. An example of application of observations of dynamical systems is the study of recurrence for random dynamical systems \cite{RFRDS}.

It is natural to wonder if these results can be extend to continuous time, if one can obtain quantitative results of recurrence for flows and more generally for observations of flows. Barreira and Saussol \cite{MR1833809} proved that for a suspension flow over an Anosov diffeomorphism and such that the invariant measure is an equilibrium state of an H\"older potential, the return time of $\nu$-almost every point $y$ in the ball $B(y,r)$ behaves like $r^{-\dim_H\nu+1}$ when $r$ goes to zero (similar results have been proved for hitting time of Lorenz like flows \cite{galatolopacifico}).

P\`ene and Saussol \cite{MR2566164} studied the billiard flow in the plane with periodic configuration of scatterers, they proved that, almost everywhere, the return time of a point $(p,v)$ in the ball $B((p,v),r)$ is of the order $\exp(\frac{1}{r^2})$ and that the return time of the position of a point in $B(p,r)$, i.e. the return time of the projection of the flow on the billiard, is of the order $\exp(\frac{1}{r})$ almost everywhere.

Following these works and the idea of Boshernitzan we study the recurrence rates for flows and observations of flows, in particular we obtain some results for the geodesic flow.

\subsection{Statement of the principal results}
Let $M$ a compact Riemannian manifold and $d$ its induced metric. Let $\Psi$ a flow on $M$. Let $\nu$ a probability measure on $M$ invariant for the flow $\Psi$. We introduce the notion of return time and recurrence rates for flows: 
\begin{definition}We define for $x\in M$ the return time of the flow $\Psi$:
\[\tau_{r}^{\Psi}(x):=\inf\{t>\eta_r(x)\,:\,\Psi_t(x)\in B(x,r)\}\]
where $B(x,r)$ is the ball centered in $x$ and of radius $r$ and $\eta_r(x)$ is the first escape time of the ball $B(x,r)$, i.e. $\eta_r(x)=\inf\{t>0,\Psi_tx\notin B(x,r)\}$.
We define also the lower and upper recurrence rates:
\[\underline{R}^{\Psi}(x):=\liminf_{r\rightarrow0}\frac{\log\tau_{r}^{\Psi}(x)}{-\log r}\qquad\textrm{and}\qquad\overline{R}^{\Psi}(x):=\limsup_{r\rightarrow0}\frac{\log\tau_{r}^{\Psi}(x)}{-\log r}.\]
\end{definition}
We will show that these recurrence rates are linked to the local dimension of the invariant measure. We recall that \emph{the lower and upper pointwise or local dimension} of a Borel probability measure $\mu$ on a metric space $X$ at a point $x\in X$ are defined by
\[\underline{d}_\mu(x)=\underset{r\rightarrow0}{\liminf}\frac{\log\mu\left(B\left(x,r\right)\right)}{\log r}\qquad\textrm{and}\qquad\overline{d}_\mu(x)=\underset{r\rightarrow0}{\limsup}\frac{\log\mu\left(B\left(x,r\right)\right)}{\log r}.\]
 Firstly, we will prove a theorem satisfied for any flow:
\begin{theorem}\label{artoosftheseintroid}
Let $\Psi$ a differentiable flow on $M$ and $\nu$ an invariant probability measure for $\Psi$. For $\nu$-almost every $x\in M$ which is not a fixed point
\[\underline{R}^{\Psi}(x)\leq \underline{d}_\nu(x)-1\qquad\textrm{and}\qquad\overline{R}^{\Psi}(x)\leq \overline{d}_\nu(x)-1.\]
\end{theorem}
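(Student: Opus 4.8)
The plan is to run a continuous-time analogue of the covering argument of Barreira--Saussol \cite{MR1833809}: the gain of one unit compared with the discrete case comes from the fact that flowing a small set for time $t$ sweeps out a ``tube'' whose $\nu$-measure grows linearly in $t$, so that only of order $\nu(B(x,r))/r$ disjoint time-translates of such a set (rather than of order $\nu(B(x,r))$) can fit inside $M$.

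First I would set up reductions. Since $\Psi$ is differentiable, $v(x):=\partial_t\Psi_t(x)|_{t=0}$ is defined and continuous, and $v(x)\neq0$ precisely when $x$ is not a fixed point; moreover $d(\Psi_t x,x)\geq\tfrac12|v(x)|\,t$ for $t$ small. Let $S_L$ be the set of $x$ with $|v(x)|\geq1/L$ and with this linear lower bound valid for $t\in[0,1/L]$; the $S_L$ are closed, increasing, $\bigcup_L S_L=M\setminus\mathrm{Fix}$, and for $x\in S_L$ and $n$ large one gets the escape-time bound $\eta_{4\rho_n}(x)\leq 8L\rho_n=:T_{0,n}$, where $\rho_n:=2^{-n}$. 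Fix also rationals $d>0$, $\eps>0$, put $T_n:=\rho_n^{1-d-\eps}$ and $D_n:=\{x:\nu(B(x,\rho_n))\geq\rho_n^d\}$, and introduce the auxiliary quantity $\tilde\tau_r^{\Psi}(x):=\inf\{t>\eta_r(x):\Psi_t(x)\in B(x,r/2)\}$; directly from the definitions one checks $\tau_{r'}^{\Psi}(x)\leq\tilde\tau_r^{\Psi}(x)$ whenever $r/2\leq r'\leq r$, which will let me pass from dyadic radii to all radii.

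The core of the proof is the estimate, for $n$ large,
\[
\nu\!\left(S_L\cap D_n\cap\{x:\tilde\tau_{4\rho_n}^{\Psi}(x)>T_n\}\right)\leq 16KL\,\rho_n^{\eps},
\]
where $K=K(\dim M)$. To prove it, fix a maximal $\rho_n$-separated set $\{z_i\}$; then $\{B(z_i,2\rho_n)\}$ covers $M$ with multiplicity $\leq K$, so $\sum_i\nu(B(z_i,2\rho_n))\leq K$. Choose $i(x)$ with $x\in B(z_{i(x)},\rho_n)$; since then $\Psi_t(x)\in B(z_{i(x)},\rho_n)$ implies $\Psi_t(x)\in B(x,2\rho_n)$, the event $\{\tilde\tau_{4\rho_n}^{\Psi}(x)>T_n\}$, intersected with $S_L$ and for $n$ large, is contained in $\bigcup_i C_i$ with $C_i:=\{x\in B(z_i,\rho_n):\Psi_t(x)\notin B(z_i,\rho_n)\ \text{for all}\ t\in[T_{0,n},T_n]\}$. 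If $y\in\Psi_{-s}(C_i)\cap\Psi_{-s'}(C_i)$ with $s'-s\in[T_{0,n},T_n]$, then $\Psi_s(y)\in C_i$ while $\Psi_{s'-s}(\Psi_s(y))\in B(z_i,\rho_n)$, contradicting the definition of $C_i$; hence the $\lfloor T_n/T_{0,n}\rfloor+1$ sets $\Psi_{-kT_{0,n}}(C_i)$ are pairwise disjoint, and invariance of $\nu$ gives $\nu(C_i)\leq 2T_{0,n}/T_n$. Finally, $C_i\cap D_n\neq\emptyset$ forces $\nu(B(z_i,2\rho_n))\geq\rho_n^d$, so at most $K\rho_n^{-d}$ indices $i$ contribute, and summing yields $K\rho_n^{-d}\cdot 2T_{0,n}/T_n=16KL\,\rho_n^{\eps}$.

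Since the right-hand side is summable in $n$, Borel--Cantelli gives: for $\nu$-a.e.\ $x\in S_L$ and all large $n$, $x\in D_n\Rightarrow\tilde\tau_{4\rho_n}^{\Psi}(x)\leq T_n$. Intersecting over $L\in\N$ and over rationals $d,\eps>0$, and using $\tau_{r'}^{\Psi}\leq\tilde\tau_{4\rho_n}^{\Psi}$ for $r'\in[2\rho_n,4\rho_n]$ together with $\log T_n=(d-1+\eps)\log(1/\rho_n)$, a routine computation then gives, for $\nu$-a.e.\ non-fixed $x$: if $\overline d_\nu(x)<d$ then $\nu(B(x,\rho_n))\geq\rho_n^d$ for all large $n$, hence $\overline R^{\Psi}(x)\leq d-1+\eps$; and if merely $\underline d_\nu(x)<d$, then rounding the infinitely many radii with $\nu(B(x,\rho))>\rho^d$ up to dyadic ones shows $x\in D_n(d+\delta)$ for infinitely many $n$ (for any fixed $\delta>0$), whence $\underline R^{\Psi}(x)\leq d+\delta-1+\eps$ along those scales. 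Letting $d\downarrow\overline d_\nu(x)$, resp.\ $d\downarrow\underline d_\nu(x)$, and $\delta,\eps\downarrow0$ proves the theorem. The one genuine point is that the pigeonhole must be run with time step $T_{0,n}$ of order $\rho_n$ rather than of fixed size --- this is exactly what turns ``about $T_n/\rho_n$ disjoint translates'' into the extra factor $\rho_n$, and hence into the $-1$ --- and this rests on the escape-time bound $\eta_{4\rho_n}(x)\lesssim\rho_n$, which is where non-fixedness and differentiability of the flow enter, and what forces the stratification into the sets $S_L$.
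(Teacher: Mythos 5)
Your proof is correct, and it follows the same overall strategy as the paper --- discretize time at a scale comparable to the escape time (which is $\lesssim r$ at non-fixed points by differentiability), cover by balls around a maximal $r$-separated set, use a packing argument to bound the measure of the set of points with long return times, and finish by Borel--Cantelli over dyadic scales. Where your route genuinely differs is in two places. First, the paper does not prove this statement directly: it derives it as Corollary~\ref{artoosfthesecorid} of the general observation result Theorem~\ref{artoosftheseth1}, applied with $f=\mathrm{id}$ and escape function $\rho_r(x)=r|\log r|$, and then uses the flow-box theorem to show $\eta_r(x)\leq r|\log r|$ for small $r$, whence $\tau_r^\Psi\leq\tau_{r,\rho}^{\Psi,\mathrm{id}}$. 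Your argument skips the abstraction of escape functions and weakly diametrically regular measures and is self-contained, at the cost of not handling general observations. Second, the paper's packing step is implemented as Markov's inequality applied to the Kac-lemma bound $\int_{D_r}\tau_{D_r}^{T_r}\,d\nu\lesssim\rho_{4r}(f(x))r^{-\eps}/r$ for the time-$r$ map $T_r=\Psi_r$, whereas you run the pigeonhole directly: the preimages $\Psi_{-kT_{0,n}}(C_i)$ for $0\leq k\leq\lfloor T_n/T_{0,n}\rfloor$ are pairwise disjoint, giving $\nu(C_i)\lesssim T_{0,n}/T_n$. These two implementations of the packing bound are of course closely related (yours is essentially an elementary proof of the weak form of Kac plus Markov), but your version makes the mechanism producing the ``$-1$'' --- namely that the admissible time step is $T_{0,n}\sim\rho_n$ rather than $O(1)$ --- entirely explicit, whereas in the paper it sits inside the choice $T_r=\Psi_r$ and the normalization of $\rho_{4r}$. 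One small technical point to tidy up: your inclusion into $\bigcup_i C_i$ needs the escape time to be \emph{strictly} below $T_{0,n}$, so either shrink $T_{0,n}$ slightly in the definition of $S_L$, or define $C_i$ with the half-open interval $(T_{0,n},T_n]$; this does not affect the estimate.
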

To obtain an equality between recurrence rates and dimensions, we need more assumptions on the system:
\begin{theorem}\label{artoosftheseintroideg}
Let $\Psi$ an Anosov flow on $M$. If $\nu$ is an equilibrium state of an H\"older potential, then 
\[\underline{R}^{\Psi}(x)= \underline{d}_\nu(x)-1\qquad\textrm{and}\qquad\overline{R}^{\Psi}(x)= \overline{d}_\nu(x)-1\]
for $\nu$-almost every $x\in M$.
\end{theorem}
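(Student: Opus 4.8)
The plan is to prove Theorem \ref{artoosftheseintroideg} by combining the universal upper bound of Theorem \ref{artoosftheseintroid} with a matching lower bound, using the symbolic coding of the Anosov flow as a suspension over a subshift of finite type. The upper bound $\overline{R}^\Psi(x)\le\overline{d}_\nu(x)-1$ and $\underline{R}^\Psi(x)\le\underline{d}_\nu(x)-1$ is already granted (the set of fixed points of an Anosov flow is empty, or at least has zero measure for an equilibrium state with full support, so the exceptional set is irrelevant). So the real content is the reverse inequality: for $\nu$-a.e. $x$,
\[
\underline{R}^\Psi(x)\ge\underline{d}_\nu(x)-1\qquad\text{and}\qquad\overline{R}^\Psi(x)\ge\overline{d}_\nu(x)-1.
\]

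First I would invoke Bowen's (and Ratner's) result that an Anosov flow is the quotient of a suspension flow $\Psi^g$ over a mixing subshift of finite type $(\Sigma,\sigma)$ with a Hölder roof function $g$, and that the equilibrium state $\nu$ of the Hölder potential $\phi$ lifts to the equilibrium state $\mu$ of the corresponding Hölder potential on the suspension, which in turn is $\bar\mu\times\mathrm{Leb}/\int g\,d\bar\mu$ where $\bar\mu$ is a Gibbs measure on $\Sigma$. The key dynamical input is that Gibbs measures for Hölder potentials on subshifts of finite type have exponential decay of correlations, hence are "rapidly mixing" in the sense needed for the lower-bound machinery the paper presumably develops for suspension flows. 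So the strategy is: (1) transport the problem from $M$ to the suspension via the metric isomorphism, checking that recurrence rates are preserved (the semiconjugacy is finite-to-one and bi-Hölder off a measure-zero set, so local dimensions and return-time exponents are unchanged); (2) apply the general lower-bound theorem for suspension flows over rapidly mixing bases — presumably stated later in the paper as the flow analogue of the Barreira–Saussol / Rousseau–Saussol recurrence results — which gives $\underline R^{\Psi^g}\ge\underline d_\mu-1$; (3) relate $\underline d_\mu$ on the suspension to $\underline d_\nu$ on $M$, again using the bi-Hölder-off-null-set property, to conclude.

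The main technical obstacle will be step (2), controlling the return time of the flow from below. The difficulty is that $\tau_r^\Psi(x)$ measures the first return to $B(x,r)$ after leaving it, and for a flow the orbit can "slide along itself" — the presence of the flow direction means the ball $B(x,r)$ intersects the orbit in a whole interval of times near $0$, which is exactly why the "$-1$" appears. To get a lower bound one must show that, for small $r$, the set of $y$ for which the flow returns to $B(y,r)$ before a large time $t$ has small $\nu$-measure; this is a hitting-time estimate, obtained by covering a suitable "tube" of radius $r$ and length proportional to $t$ by dynamical balls of the base map, using the Gibbs property to estimate each piece and the exponential mixing to control the sum over return times (a Borel–Cantelli argument along $r=r_k\to 0$ geometrically). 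I would also need to handle the escape time $\eta_r(x)$: showing $\eta_r(x)$ is not too small for a.e. $x$ (so that the "$>\eta_r(x)$" constraint is genuinely eliminating the trivial returns and not more), which follows from the flow being differentiable with bounded speed, giving $\eta_r(x)\gtrsim r$, combined with a lower bound on the angle between the orbit and the boundary sphere — this is where differentiability and the non-singularity of $\Psi$ (no fixed points) enter.

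A secondary point, worth a sentence in the proof, is the reduction of local dimensions across the semiconjugacy $\pi\colon\Sigma^g\to M$: since $\pi$ is Hölder with a Hölder "inverse" on the complement of a $\mu$-null set, and since $\bar\mu$ is a Gibbs state hence has well-defined (possibly distinct) upper and lower local dimensions a.e., one gets $\underline d_\mu(y)=\underline d_\nu(\pi y)$ and $\overline d_\mu(y)=\overline d_\nu(\pi y)$ for $\mu$-a.e.\ $y$, and the product structure $\mu=\bar\mu\times\mathrm{Leb}$ shows the local dimension on the suspension is that of $\bar\mu$ plus $1$ — but this "$+1$" is cancelled by the "$-1$" in the suspension recurrence theorem, which is the arithmetic that makes the final answer come out to $\underline d_\nu(x)-1$ on the nose. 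The genuinely new work, then, is the flow lower bound of step (2); the rest is careful bookkeeping of how the isomorphism interacts with balls, return times, and dimensions.
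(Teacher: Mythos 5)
Your architecture matches the paper's: reduce to a suspension flow over a mixing subshift of finite type via the Bowen--Ratner symbolic coding (Lipschitz after an appropriate choice of metric), observe that the lifted equilibrium state on the base is Gibbs with super-polynomial decay of correlations, apply the paper's suspension-flow lower-bound theorem, and cancel the $+1$ coming from the product structure against the $-1$ in the return-time exponent; the upper bound is the universal Theorem~\ref{artoosftheseintroid}. Your description of the tube cover and Borel--Cantelli mechanism for the hitting-time estimate is also faithful to what the paper does in Section 7.

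There is, however, a genuine gap that your summary waves away as ``careful bookkeeping.'' The lower-bound machinery (Corollary~\ref{artoosfthesecoroid}, derived from Theorems~\ref{artoosfthesethrd} and~\ref{artoosftheseth2}) is a \emph{conditional} statement: it yields $\underline R^\Psi(x)\geq\underline d_\nu(x)-1$ only at points where $\underline d_\nu(x)>1$, i.e.\ where the projection dimension $\underline d_\mu^{id,g}(x)=\underline d_\nu(x)-1$ is strictly positive. This positivity is not cosmetic: the Borel--Cantelli argument you sketch selects a scale $r_n$ tied to a lower bound $a>0$ on that dimension (in the paper $r_n=n^{-4/a}$ in Lemma~\ref{artoosfthesepositive}, and the exponent $a\eps/2$ in Lemma~\ref{artoosftheseintervalle}); with $a=0$ no summable choice of scales exists and the argument collapses. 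To promote the conditional lower bound to a statement for $\nu$-a.e.\ $x$, the paper proves a separate and nontrivial Lemma~\ref{artoosftheselemdun}: any $C^1$ flow with $h_\nu(\Psi)>0$ satisfies $\underline d_\nu(x)>1$ for $\nu$-a.e.\ $x$. Its proof builds a Poincar\'e section whose boundary has controlled $\nu$-measure, runs a Borel--Cantelli estimate against the partition boundaries, and combines the Lipschitz constant of the return map with a Shannon--McMillan--Breiman estimate to extract the strict inequality. Without that lemma (or some substitute argument showing $\underline d_\nu>1$ a.e., here supplied by positivity of entropy for Gibbs states on the subshift base), your proof only covers a subset of $M$. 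You should also record the short remark that periodic orbits of an Anosov flow form a countable, hence $\nu$-null, set, to discharge the non-periodicity hypothesis in Corollary~\ref{artoosfthesecoroid}.
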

The existence of the local dimension of the invariant measure for hyperbolic flows is still an open question. Indeed, even if for hyperbolic diffeomorphisms the existence of the local dimension has been proved \cite{MR819557,MR1709302} and there exists an explicit formula to compute it \cite{MR2237462}, for hyperbolic flows the existence has been proved only in the conformal case \cite{MR1814848,MR2257426}.

We can apply, for example, the previous theorem to the geodesic flow on a smooth manifold with striclty negative curvature. Since the geodesic flow is defined on the unit tangent bundle $T^1M$, we can also considered a particular observation of this flow: the position on the manifold $M$. Let $\Pi$ the canonical projection: 
\begin{eqnarray*}
\Pi\,\,:T^1M&\longrightarrow& M\\
(p,v)&\longmapsto& p.
\end{eqnarray*}
We study the return time for the canonical projection on the manifold $M$:
\[\tau_{r}^{\Psi,\Pi}(p,v):=\inf\{t>r\,:\,\Pi(\Psi_t(p,v))\in B(p,r)\}.\] 
Since $\Psi$ is the geodesic flow on $T^1M$, the first escape time of the projection of the flow on the manifold of the ball $B(p,r)$ is equal to $r$ for $r$ small enough. We define the recurrence rates for the canonical projection:
\[\underline{R}^{\Psi,\Pi}(p,v):=\liminf_{r\rightarrow0}\frac{\log\tau_{r}^{\Psi,\Pi}(p,v)}{-\log r}\qquad\textrm{and}\qquad\overline{R}^{\Psi,\Pi}(p,v):=\limsup_{r\rightarrow0}\frac{\log\tau_{r}^{\Psi,\Pi}(p,v)}{-\log r}.\]
\begin{theorem}\label{artoosfthesethgeogenintro}Let $\Psi$ the geodesic flow defined on $T^1M$ and $\nu$ an invariant probability measure for $\Psi$. Then for $\nu$-almost every $(p,v)\in T^1M$
\[\underline{R}^{\Psi,\Pi}(p,v)\leq\underline{d}_{\Pi_*\nu}(p)-1\qquad and \qquad \overline{R}^{\Psi,\Pi}(p,v)\leq\overline{d}_{\Pi_*\nu}(p)-1.\]
Moreover, if $M$ has a strictly negative curvature and if $\nu$ is an equilibrium state of an H\"older potential then
\[\underline{R}^{\Psi,\Pi}(p,v)=\underline{d}_{\Pi_*\nu}(p)-1\qquad and \qquad
\overline{R}^{\Psi,\Pi}=\overline{d}_{\Pi_*\nu}(p)-1\]
for $\nu$-almost every $(p,v)\in T^1M$ non-multiple such that $\underline{d}_{\Pi_*\nu}(p)>1$.
\end{theorem}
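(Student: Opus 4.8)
The plan is to obtain Theorem~\ref{artoosfthesethgeogenintro} by specializing our general bounds for recurrence rates of observations of flows to the observation $\Pi$, the real work being the verification of their hypotheses for $\Pi$ and for the geodesic flow. Observe first that the ambient-space equality (Theorem~\ref{artoosftheseintroideg}) does \emph{not} suffice here: returning with the position inside $B(p,r)\subset M$ is strictly easier than returning inside $B((p,v),r)\subset T^1M$, so the relevant dimension is that of the push-forward $\Pi_*\nu$ on $M$, not that of $\nu$ on $T^1M$, and the observation machinery is genuinely needed.

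\textbf{Upper bounds.} I would apply the general upper bound for observations of differentiable flows (which, taking $f=\mathrm{id}$, reduces to Theorem~\ref{artoosftheseintroid}) with $f=\Pi$. The point to check is that $\Pi$ is an admissible observation: it is smooth and $1$-Lipschitz, $\Pi_*\nu$ is a Borel probability on $M$, and --- this is the only geodesic-flow-specific input --- the escape time of the projected orbit from $B(p,r)$ equals $r$ for every sufficiently small $r$. Indeed $t\mapsto\Pi(\Psi_t(p,v))=\gamma_v(t)$ is the unit-speed geodesic with initial condition $(p,v)$, and since geodesics are locally minimizing one has $d(p,\gamma_v(t))=t$ for $t$ below the injectivity radius of $M$; hence $\gamma_v(t)$ lies in $B(p,r)$ precisely when $t<r$, and the first exit time is exactly $r$. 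Feeding this into the general bound yields $\underline R^{\Psi,\Pi}(p,v)\leq\underline d_{\Pi_*\nu}(p)-1$ and $\overline R^{\Psi,\Pi}(p,v)\leq\overline d_{\Pi_*\nu}(p)-1$ for $\nu$-almost every $(p,v)$ and an arbitrary invariant $\nu$ --- this is the first assertion and the ``$\leq$'' halves of the second.

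\textbf{Lower bound.} There remains the reverse inequality under the additional hypotheses. Here I would invoke the general lower bound for observations of flows, valid whenever $\Psi$ is metrically isomorphic to a suspension flow over a rapidly mixing base transformation, and which --- under a non-degeneracy assumption on the observation and the requirement $\underline d_{f_*\nu}(f(x))>1$ --- gives exactly $\underline R^{\Psi,f}(x)\geq\underline d_{f_*\nu}(f(x))-1$ and the analogue for $\overline R^{\Psi,f}$. Its hypotheses are met: on a compact manifold of strictly negative curvature the geodesic flow is Anosov, hence by the existence of Markov partitions it is metrically isomorphic to a suspension flow over a subshift of finite type with a H\"older roof function; under this isomorphism the equilibrium state of the H\"older potential corresponds to a Gibbs measure for the subshift, for which correlations of H\"older observables decay exponentially, so in particular the base is rapidly mixing. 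The ``multiple'' points that must be discarded are precisely those $(p,v)$ for which the geodesic $\gamma_v$ passes through the position $p$ at some positive time (for instance, closed geodesics through $p$), which would keep the return time of the projected orbit bounded and hence destroy any positive lower bound; this set is $\nu$-negligible, and at $\nu$-almost every $(p,v)$ in its complement with $\underline d_{\Pi_*\nu}(p)>1$ the general lower bound supplies the two reverse inequalities. Combining with the upper bounds gives the stated equalities.

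\textbf{Main obstacle.} The delicate part is the lower bound --- concretely, the mechanism behind the general lower bound, read off in the case of $\Pi$. One has the identity $\nu(\Pi^{-1}(B(p,r)))=\Pi_*\nu(B(p,r))\approx r^d$ with $d=\underline d_{\Pi_*\nu}(p)$, while each visit of the projected orbit to $B(p,r)$ lasts time $\approx r$ (the escape-time computation again), so the expected number of returns before time $T$ is of order $Tr^{d-1}$; converting ``expectation $\to0$'' into ``no return occurs'' requires making the successive visits quasi-independent through exponential decay of correlations on the base, which forces one to approximate the non-H\"older indicator of $\Pi^{-1}(B(p,r))$ by H\"older functions while keeping every constant uniform as $r\to0$ --- this is the technical heart --- and then a Borel--Cantelli argument along a suitable sequence $r_k\to0$, together with the bookkeeping needed to transfer the conclusion from the suspension (with its roof function) back to the geodesic flow. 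The threshold $T\approx r^{-(d-1)}$ is exactly what produces the exponent $d-1$, and it is informative only when $d>1$, which is why that hypothesis is imposed.
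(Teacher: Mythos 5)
Your upper bound is correct and exactly the paper's argument: apply the general Theorem~\ref{artoosftheseth1} with the escape function $\rho_r(p)=r$, which is legitimate because $t\mapsto\Pi(\Psi_t(p,v))$ is a unit-speed geodesic, so $d(p,\Pi(\Psi_t(p,v)))=t$ for small $t$ and the escape time from $B(p,r)$ is exactly $r$.

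The lower bound, however, has a genuine structural gap. You invoke a ``general lower bound for observations of flows'' of the form $\underline R^{\Psi,f}(x)\geq\underline d_{f_*\nu}(f(x))-1$, but no such theorem holds for general observations, and the paper does not state one. The actual general result (Theorem~\ref{artoosfthesethrd}) bounds the recurrence rate from below by the \emph{projection dimension} $\underline d^{f,g}_\mu(x)$ of the associated suspension flow, not by $\underline d_{f_*\nu}(f(x))-1$; and these genuinely differ. For instance, for the observation ``projection on the base'' of a suspension flow with height one, $\underline d^{f,g}_\mu(x,s)=\underline d_\mu(x)$ while $\underline d_{f_*\nu}(f(x,s))-1=\underline d_\mu(x)-1$, so a theorem in the form you cite would be false there. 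What makes the geodesic-flow case work is precisely the lemma you have not isolated (Lemma~\ref{artoosftheselempi} in the paper): for the observation $\Pi$ and a Lipschitz conjugacy $g$ to the symbolic suspension flow, $\underline d^{\Pi,g}_\mu(p,v)=\underline d_{\Pi_*\nu}(p)-1$ and likewise for $\overline d$. Its proof is a two-sided estimate of $\nu(\Pi^{-1}B(p,r))$ against $r\,\mu(\pi(\Pi\circ g)^{-1}B(p,r))$, using that the projected orbit moves at unit speed so each visit to $B(p,r)$ lasts a time comparable to $r$ in both directions. Your ``main obstacle'' paragraph contains the right heuristic for this (measure $\approx r^d$, visit length $\approx r$, so $T r^{d-1}$ expected returns), but this heuristic must be promoted to the dimension identity above and then combined with Theorem~\ref{artoosfthesethrd}; as written, the proof cites a general theorem that does not exist and skips the one non-formal step that is specific to $\Pi$.

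Two minor remarks. First, the hypothesis of Theorem~\ref{artoosfthesethrd} is $\underline d^{f,g}_\mu>0$, which after the dimension identity becomes $\underline d_{\Pi_*\nu}(p)>1$, so the threshold you correctly identify enters via this route rather than via $\underline d_{f_*\nu}>1$. Second, you assert that the set of multiple points is $\nu$-negligible; the paper explicitly leaves this as an open question, and the statement is phrased ``for $\nu$-a.e.\ non-multiple $(p,v)$'' precisely to avoid needing it, so that claim should be dropped. You also need, as in the paper, the observation that non-multiplicity forces $\tau_r^{\Psi,\Pi}(p,v)\to\infty$ (using compactness and the fact that $\tau_r^{\Psi,\Pi}\geq\beta$ for all small $r$), hence $R^{\Psi,\Pi}=R_\star^{\Psi,\Pi}$, so that the non-instantaneous lower bound of Theorem~\ref{artoosfthesethrd} indeed controls $R^{\Psi,\Pi}$.
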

Since the geodesic flow preserves the Lebesgue measure on $T^1M$, we can apply Theorem~\ref{artoosftheseintroideg} and Theorem~\ref{artoosfthesethgeogenintro} to obtain the following noteworthy result:
\begin{corollary} Let $M$ a $n$-dimensional manifold with strictly negative curvature. Let $\Psi$ the geodesic flow defined on $T^1M$. Then for almost every $(p,v)\in T^1M$
\[{R}^{\Psi}(p,v)=2n-2\]
and
\[{R}^{\Psi,\Pi}(p,v)=n-1.\]
\end{corollary}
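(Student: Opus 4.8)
The plan is to deduce the corollary directly from Theorems~\ref{artoosftheseintroideg} and~\ref{artoosfthesethgeogenintro}, once the relevant measures and their local dimensions are identified. First I would recall that the geodesic flow on $T^1M$ preserves the Liouville (Lebesgue) measure $\nu$, which in any local chart is absolutely continuous with respect to Lebesgue measure with a smooth positive density. Since $\dim T^1M=2n-1$, this forces $\underline d_\nu(p,v)=\overline d_\nu(p,v)=2n-1$ for $\nu$-almost every $(p,v)$ (a smooth measure on a $d$-dimensional manifold has local dimension $d$ at almost every point). Because $M$ has strictly negative curvature, the geodesic flow is an Anosov flow on the compact manifold $T^1M$, and the Liouville measure is the equilibrium state of the geometric potential $-\partial_t\big|_{t=0}\log\det\bigl(d\Psi_t|_{E^u}\bigr)$, which is H\"older continuous. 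Hence Theorem~\ref{artoosftheseintroideg} applies and yields $\underline R^{\Psi}(p,v)=\overline R^{\Psi}(p,v)=(2n-1)-1=2n-2$ for $\nu$-almost every $(p,v)$, i.e. $R^{\Psi}(p,v)=2n-2$.

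For the second equality I would compute the push-forward $\Pi_*\nu$. Disintegrating the Liouville measure over the base $M$, one has $d\nu(p,v)=d\mathrm{vol}_M(p)\,d\sigma_p(v)$, where $\sigma_p$ is the Riemannian measure on the unit sphere $S_pM$, which is isometric to the standard $S^{n-1}$; integrating out the fibre gives $\Pi_*\nu=c\cdot\mathrm{vol}_M$ with $c=\mathrm{vol}(S^{n-1})$ a constant independent of $p$. Thus $\Pi_*\nu$ is a smooth measure on the $n$-dimensional manifold $M$, so $\underline d_{\Pi_*\nu}(p)=\overline d_{\Pi_*\nu}(p)=n$ for almost every $p$. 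In particular $\underline d_{\Pi_*\nu}(p)=n\ge 2>1$, and the set of ``multiple'' points in $T^1M$ is $\nu$-null, so the equality case of Theorem~\ref{artoosfthesethgeogenintro} applies and gives $\underline R^{\Psi,\Pi}(p,v)=\overline R^{\Psi,\Pi}(p,v)=n-1$, i.e. $R^{\Psi,\Pi}(p,v)=n-1$, for $\nu$-almost every $(p,v)\in T^1M$.

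The only genuinely non-routine ingredients are the two structural facts used above: that the Liouville measure is indeed an equilibrium state of a H\"older potential for the geodesic flow in strictly negative curvature (a classical consequence of the Anosov property, together with H\"older regularity of the unstable Jacobian), and that the set of multiple points has zero measure, so that the ``non-multiple'' hypothesis of Theorem~\ref{artoosfthesethgeogenintro} is satisfied almost everywhere. Everything else is the elementary computation of local dimensions of smooth measures and the bookkeeping of the ambient dimensions, $2n-1$ for $T^1M$ and $n$ for $M$; I expect the verification that the multiple points form a null set to be the only place where a short argument specific to the geodesic flow is required.
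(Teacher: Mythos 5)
Your proposal is correct and follows exactly the route the paper intends: apply Theorems~\ref{artoosftheseintroideg} and~\ref{artoosfthesethgeogenintro} with $\nu$ equal to the (normalized) Liouville measure, using that a smooth measure on a $d$-dimensional manifold has local dimension $d$ almost everywhere, that the geodesic flow in strictly negative curvature is Anosov, and that Liouville is the equilibrium state of the H\"older-continuous geometric potential. The one point you flag but do not carry out, that the set of multiple points is $\nu$-null, is indeed short and worth writing down: the remark following Theorem~\ref{artoosfthesethgeoneg} records that for each fixed $p\in M$ there are only countably many $v\in T^1_pM$ with $(p,v)$ multiple (lift to the universal cover and note that each nontrivial deck transformation contributes exactly one direction), and since Liouville disintegrates as $d\mathrm{vol}_M(p)\,d\sigma_p(v)$ with $\sigma_p$ nonatomic, Fubini gives measure zero. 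With that observation inserted your argument is complete; note this argument is specific to the Liouville measure, whereas for a general equilibrium state the paper explicitly leaves the null-set question open.
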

The structure of the paper is as follows. In a first time, we introduce the notion of escape function and give an upper bound for the recurrence rate for the observation in a general setting.

Then, in Section \ref{artoosfthesesecsusp} , using suspension flows, we obtain a lower bound of the recurrence rate for the observation for flows presenting some hyperbolic behaviour.

In Section \ref{artoosfthesesecgeo} ,we used these results to compute the recurrence rates of the geodesic flow and for a particular observation of the geodesic flow, the projection on the manifold.

Then, we give, in Section \ref{artoosfthesesecexemple} , some simple but essential examples of observations to understand the concepts of escape function and projection dimension.

Finally, in the last two sections, we prove the principal theorems.

\section{Recurrence for observations of flows}
Let $M$ a Riemannian manifold and $d$ its induced metric. Let $\mathcal{B}(M)$ the Borel $\sigma$-algebra of $M$ and let $\nu$ a probability measure on $M$. Let $\Psi=\left\{\Psi_t\right\}_{t\in\R}$ be a measurable flow on $M$. We recall that the measure $\nu$ is invariant for $\Psi$ if
\[\nu(\Psi_t^{-1}(A))=\nu(A)\qquad \forall t\in \R, \forall A\in \mathcal{B}(M).\]
Let $N\in\N$. As in \cite{prfo}, we are interested in observations of dynamical systems, we consider an observation $f:M\rightarrow \R^N$ and we study the image by $f$ of the orbits of $\Psi$. In particular, we are going to study Poincar\'e recurrence for the observation of the flow $\Psi$. First of all, we need to introduce the notion of {\it escape function}:
\begin{definition}\label{artoosftheseacceptable}
A function $\rho$ is an escape function if for all $1>\xi_1>0$, for all $\xi_2>1$, for all $\eps>0$, for $f_\star\nu$-almost every $z_1\in\R^N$, it exists $\zeta>0$ such that if $r<\zeta$, then for $f_\star\nu$-almost every $z_2\in B(z_1, \min\{(1-\xi_1)r,(\xi_2-1)r\})$ we have $r^{\eps}\rho_{\xi_1r}(z_2)\leq\rho_r(z_1)\leq\rho_{\xi_2 r}(z_2)r^{-\eps}$.
\end{definition}
To understand this concept, we give some examples of escape functions and we also refer to Section~\ref{artoosfthesesecexemple}:
\begin{enumerate}
\item For $\alpha\in[0,1]$, $$\rho_r(z)=r^\alpha|\log r|.$$
\item $$\rho_r(z)=\frac{1}{f_\star\nu(B(z,r))}\int_{f^{-1}B(z,r)}\inf\left\{t>0, \Psi_ty\notin f^{-1}B(z,r)\right\}d \nu(y).$$
\item $$\rho_r(z)=\frac{|\log r|}{f_\star\nu(B(z,r))}\int_{f^{-1}B(z,r)}\inf\left\{t>0, \Psi_ty\notin f^{-1}B(z,r)\right\}d \nu(y).$$
\item $$\rho_r(z):=\underset{y\in f^{-1}B(z,r)}{\textrm{ess-sup}}\left\{\inf\left\{t>0:\Psi_ty\notin f^{-1}B(z,r)\right\}\right\}.$$
\end{enumerate}
We introduce now the notion of return time and recurrence rates for the observation of the flow:
\begin{definition}Let $f:M\rightarrow \R^N$ a measurable function and $\rho$ an escape function, we define for $x\in M$ the return time for the observation of the flow $\Psi$ with respect to $\rho$:
\[\tau_{r,\rho}^{\Psi,f}(x):=\inf\{t>\rho_r(f(x))\,:\,f(\Psi_t(x))\in B(f(x),r)\}\]
where $B(f(x),r)$ denotes the ball centered in $f(x)$ and of radius $r$. We also define the lower and upper recurrence rates for the observation of the flow $\Psi$ with respect to $\rho$:
\[\underline{R}^{\Psi,f}_\rho(x):=\liminf_{r\rightarrow0}\frac{\log\tau_{r,\rho}^{\Psi,f}(x)}{-\log r}\qquad\textrm{and}\qquad\overline{R}^{\Psi,f}_\rho(x):=\limsup_{r\rightarrow0}\frac{\log\tau_{r,\rho}^{\Psi,f}(x)}{-\log r}.\]
Let $p\in \N$. We also introduce the $p$-non-instantaneous return time for the observation of the flot $\Psi$:
\[\tau_{r,p}^{\Psi,f,\star}(x)=\inf\{t>p\,:\,f(\Psi_t(x))\in B(f(x),r)\}\]
and the lower and upper non-instantaneous recurrence rates for the observation of the flow $\Psi$:
\[\underline{R}_\star^{\Psi,f}(x)=\lim_{p\rightarrow+\infty}\liminf_{r\rightarrow0}\frac{\log\tau_{r,p}^{\Psi,f,\star}(x)}{-\log r}\]
and\[\overline{R}^{\Psi,f}_\star(x)=\lim_{p\rightarrow+\infty}\limsup_{r\rightarrow0}\frac{\log\tau_{r,p}^{\Psi,f,\star}(x)}{-\log r}.\]
\end{definition}
As it is explain in \cite{RFRDS}, it can be useful to work with non-instantaneous return time since, with the observation, there can be a loss of information.
We can notice that these recurrence rates can be equal:
\begin{proposition}\label{artoosftheseremreg} Let $\rho$ an escape function. If for $x\in M$, $\underline{R}^{\Psi,f}_{\rho}(x)>0$ then for any other escape function $\tilde\rho$ such that $\tilde\rho_r(f(x))\geq \rho_r(f(x))$ for all $r$ small enough and such that $\underline\lim_{r\rightarrow0}\frac{\log \tilde\rho_r(f(x))}{\log r}\geq0$, we have
\[\underline{R}^{\Psi,f}_\rho(x)=\underline{R}^{\Psi,f}_{\tilde\rho}(x)=\underline{R}_\star^{\Psi,f}(x)\]
and
\[\overline{R}^{\Psi,f}_\rho(x)=\overline{R}^{\Psi,f}_{\tilde\rho}(x)=\overline{R}_\star^{\Psi,f}(x).\]
\end{proposition}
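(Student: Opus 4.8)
The plan is to prove the two chains of equalities separately, in each case by comparing the return times \emph{at the same radius} $r$. Fix $\beta$ with $0<\beta<\underline{R}^{\Psi,f}_\rho(x)$; by definition of the lower limit there is $\zeta_0>0$ with $\tau_{r,\rho}^{\Psi,f}(x)>r^{-\beta}$ for all $r<\zeta_0$. This polynomial lower bound on the return time is the only consequence of the hypothesis $\underline{R}^{\Psi,f}_\rho(x)>0$ that I will use, and I will use it twice. For the first chain, the inequality $\underline{R}^{\Psi,f}_{\tilde\rho}(x)\ge\underline{R}^{\Psi,f}_\rho(x)$ (and its $\overline{R}$ analogue) is immediate: since $\tilde\rho_r(f(x))\ge\rho_r(f(x))$ for small $r$, the set of times $t>\tilde\rho_r(f(x))$ with $f(\Psi_tx)\in B(f(x),r)$ is contained in the set of times $t>\rho_r(f(x))$ with the same property, so $\tau_{r,\tilde\rho}^{\Psi,f}(x)\ge\tau_{r,\rho}^{\Psi,f}(x)$. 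For the reverse, the hypothesis $\liminf_{r\to0}\frac{\log\tilde\rho_r(f(x))}{\log r}\ge0$ makes $\tilde\rho_r(f(x))$ sub-polynomial, so $\rho_r(f(x))\le\tilde\rho_r(f(x))\le r^{-\beta/2}<r^{-\beta}<\tau_{r,\rho}^{\Psi,f}(x)$ for all small $r$; since $\tau_{r,\rho}^{\Psi,f}(x)$ is the infimum of the times $t>\rho_r(f(x))$ with $f(\Psi_tx)\in B(f(x),r)$ and this infimum is $>\tilde\rho_r(f(x))$, there is no such time in $(\rho_r(f(x)),\tilde\rho_r(f(x))]$, whence $\tau_{r,\tilde\rho}^{\Psi,f}(x)=\tau_{r,\rho}^{\Psi,f}(x)$ for all small $r$, and both equalities follow.

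For the equalities with the non-instantaneous rates, the key step is the lemma: \emph{there is $T>0$ such that, for all small $r$, no time $t\in(T,\rho_r(f(x))]$ satisfies $f(\Psi_tx)\in B(f(x),r)$}. (If no admissible $\tilde\rho$ exists the proposition is vacuous; otherwise $\rho_r(f(x))\le\tilde\rho_r(f(x))$ is sub-polynomial, i.e.\ $\le r^{-\delta}$ eventually for every $\delta>0$, which is all the lemma uses.) Suppose it fails; then one can pick $r_n\to0$ and $t_n\to+\infty$ with $t_n\le\rho_{r_n}(f(x))$ and $f(\Psi_{t_n}x)\in B(f(x),r_n)$. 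Fix $\gamma>0$ and put $s_n:=t_n^{-1/\gamma}$. Since $\rho_{r_n}(f(x))\ge t_n$ forces $r_n$ to be sub-polynomially small in $t_n$, one gets $s_n\ge r_n$ for $n$ large; since $s\mapsto\rho_s(f(x))$ is sub-polynomial near $0$, one gets $\rho_{s_n}(f(x))<t_n$ for $n$ large; and $s_n\to0$. Hence $t_n$ is a time beyond $\rho_{s_n}(f(x))$ at which $f(\Psi_\bullet x)$ lies in $B(f(x),s_n)\supseteq B(f(x),r_n)$, so $\tau_{s_n,\rho}^{\Psi,f}(x)\le t_n$ and $\frac{\log\tau_{s_n,\rho}^{\Psi,f}(x)}{-\log s_n}\le\frac{\log t_n}{-\log s_n}=\gamma$ for $n$ large. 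As $\gamma$ is arbitrary, $\underline{R}^{\Psi,f}_\rho(x)=\liminf_{s\to0}\frac{\log\tau_{s,\rho}^{\Psi,f}(x)}{-\log s}\le0$, contradicting the hypothesis.

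Granting the lemma, fix $p>T$. For all small $r$ one has $p<r^{-\beta}$, no time in $(T,\rho_r(f(x))]\supseteq(p,\rho_r(f(x))]$ at which $f(\Psi_\bullet x)$ hits $B(f(x),r)$ (by the lemma), and no such time in $(\rho_r(f(x)),r^{-\beta})$ (by the choice of $\beta$); hence the first time beyond $p$ at which $f(\Psi_\bullet x)$ returns to $B(f(x),r)$ coincides with the first such time beyond $\rho_r(f(x))$, i.e.\ $\tau_{r,p}^{\Psi,f,\star}(x)=\tau_{r,\rho}^{\Psi,f}(x)$. Consequently $\liminf_{r\to0}\frac{\log\tau_{r,p}^{\Psi,f,\star}(x)}{-\log r}=\underline{R}^{\Psi,f}_\rho(x)$ and $\limsup_{r\to0}\frac{\log\tau_{r,p}^{\Psi,f,\star}(x)}{-\log r}=\overline{R}^{\Psi,f}_\rho(x)$ for every $p>T$, and letting $p\to+\infty$ gives $\underline{R}_\star^{\Psi,f}(x)=\underline{R}^{\Psi,f}_\rho(x)$ and $\overline{R}_\star^{\Psi,f}(x)=\overline{R}^{\Psi,f}_\rho(x)$. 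The main obstacle is the lemma: because $\rho_r(f(x))$ may grow as $r\to0$ (for instance $\rho_r=|\log r|$), $\tau_{r,\rho}^{\Psi,f}$ disregards a lengthening initial window in which $\tau_{r,p}^{\Psi,f,\star}$ can still detect a return and hence be much smaller, and one must rule out that this happens at arbitrarily large times; this is exactly where $\underline{R}^{\Psi,f}_\rho(x)>0$ is used in a genuinely non-monotone way, via the change of radius from $r_n$ to $s_n$. When $\rho_r(f(x))\to0$, the situation in the intended applications, this window collapses and the lemma is immediate.
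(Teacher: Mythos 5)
Your argument is correct, and its core ingredient is exactly the one the paper points to: from $\underline{R}^{\Psi,f}_\rho(x)>0$ one extracts a polynomial lower bound $\tau_{r,\rho}^{\Psi,f}(x)\geq r^{-\beta}$ for small $r$, which, combined with the sub-polynomial growth of $\tilde\rho$ (hence of $\rho$), immediately forces $\tau_{r,\tilde\rho}^{\Psi,f}(x)=\tau_{r,\rho}^{\Psi,f}(x)$ eventually. Where you go beyond the paper's one-line ``remark'' is in the equality with the non-instantaneous rates: when $\rho_r(f(x))$ is unbounded (e.g.\ $\rho_r=|\log r|$, an escape function the paper actually uses in Section~5), the window $(p,\rho_r(f(x))]$ does not collapse for fixed $p$, and $\tau_{r,p}^{\Psi,f,\star}$ could in principle detect a return there that $\tau_{r,\rho}^{\Psi,f}$ ignores; your change-of-radius lemma (passing from $r_n$ to $s_n=t_n^{-1/\gamma}$ and contradicting $\underline{R}^{\Psi,f}_\rho(x)>0$) is precisely what rules this out, and I don't see how the paper's stated remark alone does. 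So this is the same approach, but you have identified and filled a gap that the published proof elides; that is a genuine improvement in rigor rather than a different route.
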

\begin{proof}We just need to remark that if for $x\in M$, $\underline{R}^{\Psi,f}_\rho(x)>0$ then it exists $a>0$ such that for all $r>0$ small enough we have $$\tau_{r,\rho}^{\Psi,f}(x)\geq r^{-a}.$$
\end{proof}

For discrete dynamical systems, it is proved in \cite{MR1833809} that recurrence rates are linked to the pointwise dimensions of the invariant measure and in \cite{prfo} that recurrence rates for the observation are linked to the pointwise dimension of the push-forward measure. For flows, the recurrence rates for the observation are linked to the pointwise dimension of the push-forward measure but intrinsically since theses rates depend also on the escape function. The idea is to find an optimal escape function, i.e. an escape function such that almost everywhere the escape time of the observation of the flow from $B(f(x),r)$ is smaller than the escape function when the radius is small enough and such that $\rho_r(f(x))$ is the smallest possible when $r\rightarrow0$ . So, the first theorem on recurrence for observations of flows is the following (it will be proved in Section~\ref{artoosfthesesecmajo}):

\begin{theorem}\label{artoosftheseth1}
Let $\Psi$ a flow on $M$ and $\nu$ a probability measure $\Psi$-invariant. Let $f$ a measurable observation from $M$ to $\R^N$ and let $\rho$ an escape function. Then for $\nu$-almost every $x\in M$
\[\underline{R}^{\Psi,f}_\rho(x)\leq\underset{r\rightarrow0}{\liminf}\left(\frac{\log f_\star\nu\left(B\left(f(x),r\right)\right)}{\log r}-\frac{\log \rho_r(f(x))}{\log r}\right)\]
and
\[\overline{R}^{\Psi,f}_\rho(x)\leq\underset{r\rightarrow0}{\limsup}\left(\frac{\log f_\star\nu\left(B\left(f(x),r\right)\right)}{\log r}-\frac{\log \rho_r(f(x))}{\log r}\right).\]
\end{theorem}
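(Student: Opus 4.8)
The plan is to reduce the statement to a Borel--Cantelli type argument at a sequence of geometric scales. Fix a small parameter $\theta>1$ and set $r_n=\theta^{-n}$. For the $\limsup$ (upper) bound, I would fix a value $s$ strictly larger than $\limsup_{r\to0}\bigl(\frac{\log f_\star\nu(B(f(x),r))}{\log r}-\frac{\log\rho_r(f(x))}{\log r}\bigr)$ on a positive-measure set, and show that on that set $\tau_{r,\rho}^{\Psi,f}(x)\le r^{-s}$ for all sufficiently small $r$; dividing by $-\log r$ and letting $r\to0$ then gives $\overline{R}^{\Psi,f}_\rho(x)\le s$, and since $s$ can be taken arbitrarily close to the $\limsup$, the conclusion follows (the $\liminf$ statement is handled symmetrically by working along a subsequence of scales). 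So the heart of the matter is: at scale $r$, bound the waiting time until the $f$-orbit re-enters $B(f(x),r)$ by roughly $f_\star\nu(B(f(x),r))^{-1}\rho_r(f(x))^{\pm\eps}$ times a log factor.

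The key estimate is a covering/pigeonhole argument in the spirit of Boshernitzan and of \cite{MR1833809,prfo}, adapted to continuous time. First I would push the problem down to $\R^N$ via $f$: it suffices to control, for $f_\star\nu$-typical $z=f(x)$, how long the orbit segment $\{f(\Psi_t x):t\in[0,T]\}$ can avoid $B(z,r)$ after leaving it. Partition the time interval $[0,T]$ into consecutive blocks whose lengths are governed by the escape function $\rho$: by the definition of the escape function (Definition~\ref{artoosftheseacceptable}), at a typical point $z_1$ and for $r$ small, the escape time $\rho_r$ is comparable up to $r^{\pm\eps}$ at all nearby centers $z_2$. Using that $\rho$ (in the relevant examples) dominates the actual escape time, one gets that during a block of length $\sim\rho_r(z)$ the orbit stays in a ball of radius $\sim r$; hence distinct blocks that avoid $B(z,r)$ land in disjoint sets each of $f_\star\nu$-measure $\gtrsim f_\star\nu(B(z,r))\cdot r^{\eps}$. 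Since total measure is $1$, the number of such blocks is $\lesssim f_\star\nu(B(z,r))^{-1}r^{-\eps}$, so the avoidance time is at most $\rho_r(z)\cdot f_\star\nu(B(z,r))^{-1}\cdot r^{-C\eps}$, which after taking logarithms and using arbitrariness of $\eps$ is exactly the claimed bound.

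To make the ``typical $z$'' quantifier uniform I would integrate: define the exceptional set at scale $r_n$ as those $x$ for which $\tau_{r_n,\rho}^{\Psi,f}(x)$ exceeds the target, estimate its $\nu$-measure by the above disjointness-of-iterates argument combined with invariance of $\nu$, show the measures are summable (after possibly passing to the geometric subsequence $r_n=\theta^{-n}$ and a monotonicity argument relating $\tau$ at radius $r$ to $\tau$ at the nearest $r_n$), and invoke Borel--Cantelli. A standard interpolation between consecutive scales $r_{n+1}\le r<r_n$, using that $\tau_{r,\rho}^{\Psi,f}$ is monotone in $r$ for fixed $x$ up to the $\rho$-dependence, upgrades the almost-sure bound along $r_n$ to the full $\liminf$/$\limsup$ as $r\to0$; here one uses the hypotheses on $\rho$ implicit in the escape-function definition to control the $\rho_r(f(x))$ term across scales.

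The main obstacle I anticipate is the interplay between the continuous-time escape behaviour and the escape function: one must ensure that the orbit genuinely spends time $\gtrsim\rho_r(f(x))$ inside each ``re-entry-free'' ball so that the disjoint pieces are large enough, while simultaneously the escape function is small enough that $\rho_r(f(x))/(-\log r)\to0$ does not spoil the rate --- and all of this has to hold off a null set uniformly enough to run Borel--Cantelli. The definition of escape function is precisely engineered to make the comparison $r^\eps\rho_{\xi_1 r}(z_2)\le\rho_r(z_1)\le\rho_{\xi_2 r}(z_2)r^{-\eps}$ available at nearby centers, and the care in the proof will be in choosing $\xi_1,\xi_2$ (close to $1$) and $\eps$ (small) in the right order so that these inequalities chain together cleanly with the measure-disjointness count.
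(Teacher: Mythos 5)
Your high-level plan (fix a target exponent, show it cannot be exceeded on a positive-measure set, sum along geometric scales, Borel--Cantelli) matches the paper's strategy, as does the use of the escape-function comparison and weak diametric regularity to pass between nearby centers. But the central measure estimate, which you describe as a ``covering/pigeonhole argument in the spirit of Boshernitzan,'' has a genuine gap, and it is not the route the paper takes.

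The gap is in the claim that ``distinct blocks that avoid $B(z,r)$ land in disjoint sets each of $f_\star\nu$-measure $\gtrsim f_\star\nu(B(z,r))\,r^{\eps}$,'' so that the number of blocks is bounded by the reciprocal measure. Nothing prevents the orbit from re-entering the same $r$-ball many times while still avoiding $B(f(x),r)$: a trajectory can loop arbitrarily often in a small region away from the center ball before finally re-entering it, so the balls $B(z_k,r)$ built from consecutive time blocks need not be pairwise disjoint, and the ``total measure $\le1$'' count does not bound the avoidance time. (Boshernitzan's original argument bounds the \emph{cardinality} of an $r$-separated set via finiteness of an $\alpha$-Hausdorff measure on $Y$; you have no separation of the $z_k$'s here and no such Hausdorff hypothesis.) You also implicitly assume that $\rho_r$ dominates the actual escape time so that the orbit stays in a fixed $r$-ball over a block of length $\sim\rho_r$; this is true for some of the sample escape functions but is not part of Definition~\ref{artoosftheseacceptable} and is not a hypothesis of the theorem.

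The paper circumvents exactly this difficulty by turning the problem into an averaged one rather than a counting one: it covers $f(M_\delta)$ by a maximal $r$-separated family of balls, freezes the center so that the escape-function comparison replaces $\rho_{3r}(f(y))$ by $\rho_{4r}(f(x))r^{-\eps}$, then discretizes time via $T_r=\Psi_r$ and applies Ka\v c's lemma to get $\int_{D_r}\tau_{D_r}^{T_r}\,d\nu\le \rho_{4r}(f(x))r^{-\eps}/r$, followed by Markov's inequality. The Ka\v c identity is precisely what supplies the ``disjointness'' your argument needs, but at the level of iterated preimages of first-return sets integrated over the ball, not of orbit segments of a single point in space. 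If you want to salvage your plan you should replace the pigeonhole step by this Ka\v c--Markov estimate on the integrated non-instantaneous return time of the discretized flow; the rest of your scaffolding (maximal separated set, escape-function comparison, weak diametric regularity, Borel--Cantelli along $e^{-n}$) is the correct surrounding machinery.
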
 
We emphasize that this result cannot be obtained using the time-1 map, i.e. $Tx=\Psi_1(x)$, which gives a non-optimal results. Indeed, using the results of \cite{prfo} with the time-1 map, we obtain that for $\nu$-almost every $x\in M$
\[\underline{R}^{\Psi,f}_\rho(x)\leq\underline{d}_{f_\star\nu}(f(x))\qquad and \qquad
\overline{R}^{\Psi,f}_\rho(x)\leq\overline{d}_{f_\star\nu}(f(x)).\]
This result is optimal when
\[\lim_{r\rightarrow0}\frac{\log \rho_r(f(x))}{\log r}=0\]
but it is a degenerate case as we will see in Section~\ref{artoosfthesesecexemple}.

We notice that when the observation is the identity, we obtain the following result, given in the introduction (Theorem~\ref{artoosftheseintroid}), for points which are not fixed points (it exists $t\in \R$ such that $\Psi_tx\neq x$):
\begin{corollary}\label{artoosfthesecorid}
Let $\Psi$ a differentiable flow on $M$ and $\nu$ an invariant probability measure for $\Psi$. For $\nu$-almost every $x\in M$ which is not a fixed point
\[\underline{R}^{\Psi}(x)\leq \underline{d}_\nu(x)-1\qquad\textrm{and}\qquad\overline{R}^{\Psi}(x)\leq \overline{d}_\nu(x)-1.\]
\end{corollary}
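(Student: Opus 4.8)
The plan is to derive Corollary~\ref{artoosfthesecorid} from Theorem~\ref{artoosftheseth1} by making the obvious choice of observation and escape function, and then controlling the escape function near a non-fixed point. Concretely, I would take $f=\mathrm{id}:M\to\R^N$ (identifying $M$ with a subset of some $\R^N$, or simply reading the theorem with $f$ the identity on the metric space $M$, so that $f_\star\nu=\nu$ and $B(f(x),r)=B(x,r)$), and I would take $\rho$ to be the escape function
\[
\rho_r(z):=\frac{1}{\nu(B(z,r))}\int_{B(z,r)}\inf\{t>0:\Psi_t y\notin B(z,r)\}\,d\nu(y),
\]
which is example (2) in the list of escape functions (with $f$ the identity). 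With this choice, the conclusion of Theorem~\ref{artoosftheseth1} reads
\[
\underline{R}^{\Psi}(x)\le\liminf_{r\to0}\left(\frac{\log\nu(B(x,r))}{\log r}-\frac{\log\rho_r(x)}{\log r}\right),
\]
and similarly for the $\limsup$, so the corollary follows once I show that for $\nu$-a.e.\ non-fixed $x$,
\[
\liminf_{r\to0}\frac{\log\rho_r(x)}{\log r}\ge 1,\qquad\text{equivalently}\qquad \rho_r(x)\ge r^{1+o(1)}\text{ is \emph{not} needed --- I need a lower bound }\rho_r(x)\gtrsim r^{1-o(1)},
\]
wait: since $\log r<0$, the inequality $-\log\rho_r(x)/\log r$ should \emph{decrease} the bound, so what I actually need is $\liminf_{r\to0}\frac{\log\rho_r(x)}{\log r}\ge 1$, i.e.\ a lower bound of the form $\rho_r(x)\ge c\,r$ for small $r$ (up to subpolynomial factors). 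So the heart of the matter is: near a non-fixed point, the average escape time from $B(x,r)$ is at least of order $r$.

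The key geometric input is that if $x$ is not a fixed point of the differentiable flow $\Psi$, then the speed $\|\frac{d}{dt}\Psi_t(x)|_{t=0}\|$ is a positive number, and by continuity (and compactness of $M$) the speed is bounded above by some constant $S<\infty$ on all of $M$; hence for $y$ in a neighbourhood of $x$ the flow cannot leave $B(y,r)$ in time less than roughly $r/(2S)$. More carefully: I would first use that the set of fixed points is measurable and that on its complement the local velocity is positive, partition the non-fixed part of $M$ into countably many pieces on each of which the speed is bounded below by $1/k$ and the orbit stays ``nearly straight'' for a controlled time, and then argue that for $\nu$-a.e.\ $x$ there is $r_0(x)>0$ and a constant $c(x)>0$ with $\inf\{t>0:\Psi_t y\notin B(y,r)\}\ge c(x) r$ for all $y$ in a fixed small ball around $x$ and all $r<r_0(x)$; this is exactly a uniform lower bound on the escape time, hence $\rho_r(x)\ge c(x) r$, giving $\frac{\log\rho_r(x)}{\log r}\ge 1+\frac{\log c(x)}{\log r}\to 1$.

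The main obstacle, and the step deserving the most care, is the uniformity: the escape-time lower bound must hold not just at $x$ itself but \emph{uniformly for $\nu$-almost every $y$ in the small ball $B(x,r)$}, because $\rho_r(x)$ is an average of escape times from $B(x,r)$ over those $y$, and a priori a positive-measure set of nearby points could have anomalously short escape times (for instance if they are close to the fixed-point set even though $x$ is not). I would handle this by a Fubini/Egorov-type argument: restrict to the $\nu$-conull set of non-fixed points, use the continuity of $(t,y)\mapsto\Psi_t(y)$ together with $\|\partial_t\Psi_t(y)|_{t=0}\|$ being bounded above globally to get that escape from $B(y,r)$ requires time $\ge r/(2S)$ for \emph{every} non-fixed $y$ once $r$ is small relative to nothing at all (the upper speed bound is global and uniform, so this step is actually clean), and only the lower bound on $\nu(B(x,r))$-type quantities needs the pointwise-dimension language already set up. Finally I would assemble: plugging $\rho_r(x)\ge r/(2S)$ into Theorem~\ref{artoosftheseth1} gives $\frac{\log\rho_r(x)}{\log r}\ge 1+\frac{\log(1/2S)}{\log r}\to1$ as $r\to0$, whence $\liminf\big(\tfrac{\log\nu(B(x,r))}{\log r}-\tfrac{\log\rho_r(x)}{\log r}\big)\le\underline d_\nu(x)-1$ and likewise for the $\limsup$, which is the assertion of the corollary.
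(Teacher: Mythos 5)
Your proposal has a genuine gap, and it is twofold.

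\textbf{Direction of the required bound on $\rho$.} Theorem~\ref{artoosftheseth1} with $f=\mathrm{id}$ gives
$\underline{R}^{\Psi,\mathrm{id}}_\rho(x)\le\liminf_{r\to0}\bigl(\tfrac{\log\nu(B(x,r))}{\log r}-\tfrac{\log\rho_r(x)}{\log r}\bigr)\le\underline{d}_\nu(x)-\liminf_{r\to0}\tfrac{\log\rho_r(x)}{\log r}$,
so you do need $\liminf_{r\to0}\tfrac{\log\rho_r(x)}{\log r}\ge1$. But because $\log r<0$, this is equivalent to an \emph{upper} bound $\rho_r(x)\le r^{1-o(1)}$, not the lower bound $\rho_r(x)\gtrsim r$ that you set out to prove. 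Your geometric input (a global upper bound $S$ on the flow speed) naturally yields a lower bound on the escape time, i.e.\ $\rho_r(x)\ge r/(2S)$, and the algebra $\rho_r\ge r/(2S)\Rightarrow\tfrac{\log\rho_r}{\log r}\ge1+\tfrac{\log(1/2S)}{\log r}$ has a sign error: dividing by the negative quantity $\log r$ flips the inequality, so you actually get $\tfrac{\log\rho_r}{\log r}\le1+o(1)$, which is the wrong direction and does not help. What you need is the \emph{opposite} geometric input, a \emph{lower} bound on the speed near a non-fixed point, which the paper gets from the flow box theorem: in a neighbourhood of a non-fixed $x$ one has $d(z,\Psi_tz)\ge\gamma t$, hence the escape time from $B(x,r)$ is at most $r/\gamma$.

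\textbf{Relating $\underline R^{\Psi,\mathrm{id}}_\rho$ to $\underline R^{\Psi}$.} Theorem~\ref{artoosftheseth1} bounds $\underline{R}^{\Psi,\mathrm{id}}_\rho$, whose return time $\tau_{r,\rho}^{\Psi,\mathrm{id}}$ uses the delay $\rho_r(x)$, whereas the corollary concerns $\underline{R}^\Psi$, whose return time $\tau_r^\Psi$ uses the delay $\eta_r(x)$ (the first escape time of $x$ from $B(x,r)$). To deduce the corollary you must check $\tau_r^\Psi(x)\le\tau_{r,\rho}^{\Psi,\mathrm{id}}(x)$, equivalently $\eta_r(x)\le\rho_r(x)$ for $r$ small. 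With your choice of $\rho$ (the \emph{average} escape time over $B(x,r)$) this is unclear: $\eta_r(x)$ is one of the quantities being averaged and could well exceed the average. Your proposal does not address this step at all. The paper sidesteps both issues by simply taking $\rho_r(x)=r|\log r|$, which makes $\tfrac{\log\rho_r}{\log r}\to1$ automatic, and then using the flow box theorem to show $\eta_r(x)\le r/\gamma\le r|\log r|=\rho_r(x)$ for $r$ small, giving $\tau_r^\Psi\le\tau_{r,\rho}^{\Psi,\mathrm{id}}$.
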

\begin{proof}
Using Theorem~\ref{artoosftheseth1} with $f$ the observation identity:
\begin{eqnarray*}
f:M&\longrightarrow& M\\
x&\longmapsto& x
\end{eqnarray*}
and choosing the escape function $\rho$ as follows:
\begin{eqnarray*}
\rho_r:M&\longrightarrow& \R\\
x&\longmapsto & \rho_r(x)=r|\log r|
\end{eqnarray*}
we obtain that
\[\underline{R}^{\Psi,f}_\rho(x)\leq \underline{d}_\nu(x)-1\qquad\textrm{and}\qquad\overline{R}^{\Psi,f}_\rho(x)\leq \overline{d}_\nu(x)-1.\]
We still need to prove that $\underline{R}^{\Psi}(x)\leq \underline{R}^{\Psi,f}_\rho(x)$ and that $\overline{R}^{\Psi}(x)\leq\overline{R}^{\Psi,f}_\rho(x)$. For this, we only need to prove that for almost every $x\in M$ and for $r$ small enough we have $\tau_r^\Psi(x)\leq\tau_{r,\rho}^\Psi(x)$. By the flow box theorem \cite{MR0361232}, and since the flow is differentiable, around a non-fixed point it exists a neightboorhood $U$, a constant $\gamma$ and a time $T$ such that fo all $z\in U$ and for all $0<t<T$, $d(z,\Psi_tz)\geq\gamma t$. Then, for all $x\in M$ which is not a fixed point, it exists $r_1(x)>0$ such that for all $0<r<r_1$, $\inf\{t>0, \Psi_tx\notin B(x,r)\}\leq r|\log r|$ and then  $\tau_r^\Psi(x)\leq\tau_{r,\rho}^\Psi(x)$.
\end{proof}
As in \cite{MR1833809,MR2191396,prfo}, to obtain a lower bound for recurrence rates, we need more assumptions on the system. For flows, an assumption on the speed of decay of correlations will not be optimal. Indeed, it is possible to construct Axiom A flow with an arbitrarily slow decay of correlations (e.g. \cite{MR692974}). That is the reason why we will use suspension flow and the assumption on the speed of mixing will be for the dynamic on the base.
\section{Recurrence for observations via suspension flows}\label{artoosfthesesecsusp}

From now on, we suppose that the manifold $M$ is compact.
To get a lower estimate of the recurrence rate we used the suspension or special flow.

Let $(X,\mathcal{A},\mu,d,T)$ a metric measure preserving system,  i.e. $\mathcal{A}$ is a $\sigma$-algebra, $\mu$ is a measure on $(X,\mathcal{A})$ with $\mu(X)=1$, $\mu$ is invariant for $T$ (i.e $\mu(T^{-1}A)=\mu(A)$ for all $A\in\mathcal{A}$) where $T:X\rightarrow X$ and $d$ is a metric.

 Let $\phi:X\rightarrow (0,+\infty)$ a measurable and integrable function. We define the space under the function $\phi$:
\[Y:=\left\{(u,s)\in X\times\R : 0\leq s\leq \phi(u)\right\}\]
where $(u,\phi(u))$ and $(Tu,0)$ are identified for all $u\in X$.
The {\it suspension flow} or the {\it special flow} over $T$ with height function $\phi$ is the flow $\Phi$ which acts on $Y$ by the following transformation
\[\Phi_t(u,s)=(u,s+t)\qquad\forall(u,s,t)\in X\times \R^+\times\R^+.\]
The metric on $Y$ is the Bowen-Walters distance (e.g. \cite{MR0341451}). We recall that the measure $\nu_\mu$ defined by $\mu\otimes Leb$ on $Y$ and normalized is invariant for the flow $\Phi$. Generally, flow and suspension flow are linked:
\begin{theorem}[\cite{MR0005800}]
Any flow $\Psi$ without fixed points is metrically isomorphic to a special flow $\Phi$.
\end{theorem}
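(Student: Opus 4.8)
The plan is to realise the flow as ``vertical translation'' over a cross-section. Concretely, I would construct a measurable set $X\subseteq Y$ which meets $\nu$-almost every orbit $\{\Psi_t y\}_{t\in\R}$ in a bi-infinite sequence of times $\cdots<t_{-1}(y)<t_0(y)<t_1(y)<\cdots$ that is \emph{discrete with gaps bounded below}, say $t_{k+1}(y)-t_k(y)\geq\delta$ for a fixed $\delta>0$. Granting such an $X$, one sets $T$ to be the first-return map of the flow to $X$ and $\phi:X\to(0,+\infty)$ the first-return time; Poincar\'e recurrence applied to $\Psi$ forces $\phi<\infty$ $\mu$-a.e. (where $\mu$ is the measure induced on $X$), and comparing the two invariant measures in flow-box coordinates yields the Kac-type identity $\int_X\phi\,d\mu=\nu(Y)$, so $\phi$ is integrable and $(X,\mu,T)$ with roof $\phi$ is a legitimate base for a special flow $\Phi$ on $Y_\phi=\{(u,s):0\le s\le\phi(u)\}$. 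The map $\pi:Y_\phi\to Y$, $\pi(u,s)=\Psi_s(u)$, is then a bijection mod null sets: its inverse sends $z$ to $(x,s)$ where $x$ is the last point of $X$ on the orbit of $z$ at or before ``time $0$'' and $s$ is the elapsed time, which is well defined precisely because the hitting times are discrete. Since $\pi$ conjugates $\Phi$ to $\Psi$ by construction, after normalising $\mu$ to a probability measure this is the desired metric isomorphism.

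The heart of the matter is the construction of $X$. Since $\Psi$ has no fixed points, for $\nu$-a.e. $y$ there is $t\neq0$ with $\Psi_t y\neq y$; by joint measurability of $(t,y)\mapsto\Psi_t y$ and continuity of the metric, for every small $\eta>0$ one finds a positive-measure set on which $d(y,\Psi_t y)$ stays bounded away from $0$ for all $t$ in an interval of length $\eta$, that is, genuine \emph{flow boxes}. I would then invoke the flow version of Rokhlin's lemma: for each $n$ there is a measurable $A_n\subseteq Y$ with $\{\Psi_t A_n:0\le t<1/n\}$ pairwise disjoint $\pmod\nu$ and $\nu\bigl(\bigcup_{0\le t<1/n}\Psi_t A_n\bigr)>1-1/n$, so up to error $1/n$ the space is partitioned into disjoint flow boxes of height $1/n$ over $A_n$. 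The difficulty is that a naive limit of the bases $A_n$ typically degenerates to a null set, so one must choose the towers \emph{compatibly} and assemble $X$ as a countable measurable union of slices of these flow-box bases, arranged so that each orbit is met on a set that is nonempty infinitely often forwards and backwards (again by recurrence) and $\delta$-separated. This is exactly the delicate exhaustion/diagonal argument of Ambrose and Kakutani, and I would follow it rather than attempt a shortcut.

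With $X$ in hand the remaining verifications are routine. Discreteness of the hitting times makes $T=\Psi_{\phi(\cdot)}$ a well-defined bimeasurable bijection of $X$; invariance of $\mu$ follows from invariance of $\nu$ together with the fact that, in the flow-box picture, $\nu$ restricted to a box of height $h$ over a piece $B\subseteq X$ equals $\mu|_B\otimes\mathrm{Leb}|_{[0,h]}$ up to the global normalising constant. That same identity identifies $\pi_*(\mu\otimes\mathrm{Leb})$ with $\nu$ on $Y_\phi$ and yields $\int_X\phi\,d\mu<\infty$. Measurability of $\pi$ and of $\pi^{-1}$ is immediate from joint measurability of the flow and measurability of the hitting-time functions $t_k$. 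Hence $\pi$ is a metric isomorphism intertwining $\Phi$ and $\Psi$, as claimed.

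The main obstacle, as indicated, is purely the \emph{measurable} construction of a cross-section with gaps bounded below: one must rule out orbits accumulating on $X$ and glue infinitely many flow-box bases into a single measurable set hitting $\nu$-a.e. orbit correctly. Once this is done, Kac integrability of the roof and the isomorphism property are straightforward computations in flow-box coordinates.
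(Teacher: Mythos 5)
The paper does not prove this statement; it is cited directly from Ambrose \cite{MR0005800} (extended in Ambrose--Kakutani), so there is no proof in the paper to compare your proposal against. That said, your sketch is the standard Ambrose--Kakutani construction and you correctly isolate the technical heart: building a measurable cross-section $X$ whose orbit hitting times are discrete with gaps bounded below by a fixed $\delta>0$, then taking the first-return map and first-return time as the base transformation and roof, with the flow-box identification $\pi(u,s)=\Psi_s u$ and the Kac-type formula $\int_X\phi\,d\mu=\nu(Y)$ giving integrability of the roof and the metric isomorphism. One wrinkle worth fixing in your heuristic: Rokhlin towers of height $1/n$ do not move you toward a \emph{fixed} lower gap $\delta$ --- they only give separation at least $1/n\to 0$. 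The towers should have a fixed height of order $\delta$ with the uncovered set shrinking to zero measure, and even so the naive limit of the bases degenerates, which is exactly the obstacle you flag and for which you would (rightly) follow the Ambrose--Kakutani exhaustion rather than shortcut it. With that correction your outline is a faithful account of the cited theorem rather than an independent proof, which is appropriate since the paper only invokes it as a black box.
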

It means that for a flow $\Psi$ on M with invariant measure $\nu$, it exists a suspension flow $\Phi$ over $T$ with height function $\phi$ and an invariant measure $\nu_\mu$ such that: it exists a set $M'$ such that $\nu(M')=1$, a set $Y'$ such that $\nu_\mu (Y')=1$ and a function $g: Y\rightarrow M$ which is one-to-one from $Y'$ to $M'$, such that for all $t\geq0$:
\[\Psi_t \circ g=g\circ \Phi_t\qquad \textrm{and}\qquad \Psi_t \circ g^{-1}=g^{-1}\circ\Phi_t\]
and such that for all measurable subset $A\subset M$ and all measurable subset $B\subset Y$:
\[\nu(A)=\nu_\mu(g^{-1}A)\qquad \textrm{and}\qquad \nu_\mu (B)=\nu(g(B)).\]
We emphasize that for hyperbolic flows, we can choose the metric on $X$ such that the function $g$ is Lipschitz \cite{MR0339281}.

In the study of the recurrence for observations of suspension flows, it appears that the recurrence rates are bounded from below by some new quantities, that is the reason why we introduce the definition of the local projection dimension of the associated suspension flow:
\begin{definition}Let $\Psi$ a flow metrically isomorphic to a suspension flow $\Phi$ over $(X,\mathcal{A},T,\mu)$ with height function $\phi$. Let $g:Y\rightarrow M$ the isomorphism linking $\Psi$ and $\Phi$. We define the lower and upper local projection dimension of the associated suspension flow for the observation $f$ at a point $x\in M$
\[\underline{d}^{f,g}_\mu(x)=\underset{r\rightarrow0}{\underline\lim}\frac{\log \mu\left(\pi (f\circ g)^{-1}B(f(x),r)\right)}{\log r}\] and \[ \overline{d}^{f,g}_\mu(x)=\underset{r\rightarrow0}{\overline{\lim}}\frac{\log \mu\left(\pi (f\circ g)^{-1}B(f(x),r)\right)}{\log r}\]
where $\pi:Y\rightarrow X$ is the projection on $X$, i.e. $\forall(u,t)\in Y$, $\pi(u,t)=u$.
\end{definition}

Even if these dimensions are not identical to the local dimensions of the push-forward measure, in some case we can link them (see Section~\ref{artoosfthesesecgeo} on the geodesic flow for example).
These dimensions appear naturally in our study. Indeed, when $f=id$, i.e when there is no observation, the return time of a point $(x,s)\in A\times B\subset Y$ in the set $AxB$ under the action  of the suspension flow is bounded from below by the return time of $x$ in $A$ under the action of $T$ and bounded from above by this same return time plus a constant (depending only on the height function $\phi$). In \cite{MR1833809} this idea was already used to prove that the asymptotic behaviour of the return time for a suspension flow of a point $(x,s)$ in $B((x,s),r)$ was equal to the asymptotic behaviour of the return time of $x$ in $B(x,r)$ under the action of $T$ which is linked to the asymptotic behaviour of the measure of $B(x,r)$. When there is an observation $f$, the return time of $f(x,s)$ in a set $C$ correspond to the return time of $(x,s)$ in $f^{-1}C$ under the action of the suspension flow. So, it is natural to try to link the return time of $(x,s)$ in $f^{-1}C$ to the measure of the projection on $X$ of the set $f^{-1}C$. For more details see Lemma \ref{artoosfthesepositive} and \ref{artoosftheseintervalle}.

 Moreover, we recall the definition of the decay of correlations:
\begin{definition}
$(X,T,\mu)$ has a super-polynomial decay of correlations if, for all $\varphi_1$, $\varphi_2$ Lipschitz functions from $X$ to $\R$ and for all $n\in\N^*$,  we have:
\[\left|\int_X\varphi_1\circ T^n\,\varphi_2 d\mu-\int_X \varphi_1 d\mu\int_X\varphi_2 d \mu\right|\leq\|\varphi_1\|\|\varphi_2\|\theta_n\]
with $\lim_{n\rightarrow\infty}n^k\theta_n =0$ for all $k>0$ and where $\|.\|$ is the Lipschitz norm.
\end{definition}
The second main theorem is the following:
\begin{theorem}\label{artoosfthesethrd}
Let $\Psi$ a flow from $M$ to $M$ and $\nu$ an invariant probability measure for $\Psi$. Let $f$ a Lipschitz observation from $M$ to $\R^N$. If $\Psi$ is metrically isomorphic to a suspension flow $\Phi$ over $T$ such that $T$ has a super-polynomial decay of correlations and if the isomorphism $g$ linking these two flows is Lipschitz, then
\[\underline{R}_\star^{\Psi,f}(x)\geq \underline{d}^{f,g}_\mu(x)\qquad and\qquad \overline{R}_\star^{\Psi,f}(x)\geq\overline{d}^{f,g}_\mu(x)\]
for $\nu$-almost every $x\in M$ such that $\underline{d}^{f,g}_\mu(x)>0$.
\end{theorem}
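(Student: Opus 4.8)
\medskip
\noindent The plan is to transport the statement to the suspension flow, reduce the observed return time of the flow to a return time of the base map $T$, and then use the super-polynomial decay of correlations of $T$.

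Write $F:=f\circ g\colon Y\to\R^N$ and $x=g(u,s)$. Since $g$ and $f$ are Lipschitz, so is $F$, and the conjugacy $\Psi_t\circ g=g\circ\Phi_t$ gives $f(\Psi_t(x))=F(\Phi_t(u,s))$; as $\nu=g_*\nu_\mu$, it is enough to bound from below, for $\nu_\mu$-almost every $(u,s)\in Y$, the quantity $\tau_{r,p}^{\Psi,f,\star}(x)=\inf\{t>p:\Phi_t(u,s)\in F^{-1}B(F(u,s),r)\}$. Set
\[E_r(u,s):=\pi F^{-1}B(F(u,s),r)\subseteq X,\]
so $\mu(E_r(u,s))$ is exactly the quantity appearing in $\underline d^{f,g}_\mu(x)$ and $\overline d^{f,g}_\mu(x)$. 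Over the fibre above $T^ju$ ($j\ge1$) one has $\Phi_t(u,s)=(T^ju,\,t-S_j\phi(u)+s)$ for $t\in[S_j\phi(u)-s,\,S_{j+1}\phi(u)-s)$, where $S_n\phi(u)=\sum_{i=0}^{n-1}\phi(T^iu)$, so a return at such a time forces $T^ju\in E_r(u,s)$. Using that for $\mu$-a.e.\ $u$ no iterate $T^ju$ equals $u$ (as $\mu$ is mixing, hence has no periodic atoms), that $S_n\phi(u)\sim n\int\phi\,d\mu$ by Birkhoff's theorem, and that the non-instantaneous cut-off $p$ forces the relevant index $j$ to be large, I would obtain: for $\nu_\mu$-a.e.\ $(u,s)$ and all large $p$,
\[\tau_{r,p}^{\Psi,f,\star}(x)\ \ge\ \tfrac12\Big(\textstyle\int\phi\,d\mu\Big)\ \tau^T_{E_r(u,s)}(u)\ -\ s\qquad\text{once }\tau^T_{E_r(u,s)}(u)\text{ is large,}\]
where $\tau^T_E(u):=\inf\{j\ge1:T^ju\in E\}$. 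Hence the whole problem reduces to proving, for $\nu_\mu$-a.e.\ $(u,s)$,
\begin{equation}\label{eq:plancore}
\liminf_{r\to0}\frac{\log\tau^T_{E_r(u,s)}(u)}{-\log r}\ \ge\ \underline d^{f,g}_\mu(x)\qquad\text{and}\qquad
\limsup_{r\to0}\frac{\log\tau^T_{E_r(u,s)}(u)}{-\log r}\ \ge\ \overline d^{f,g}_\mu(x),
\end{equation}
after which the theorem follows by letting $r\to0$, then $p\to\infty$, then the auxiliary $\eps$ below to $0$.

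To prove \eqref{eq:plancore} I would fix $\eps>0$, a dyadic sequence $r_k=2^{-k}$, and a constant $\gamma>0$ strictly smaller than the essential infimum of $\underline d^{f,g}_\mu$; cover the compact set $f(M)\subset\R^N$ by finitely many balls $B(z^{(k)}_i,r_k)$ with bounded overlap (their number being $O(r_k^{-N})$); set $G^{(k)}_i:=\pi F^{-1}B(z^{(k)}_i,2r_k)$; and keep only the ``light'' indices, i.e.\ those $i$ with $\mu(G^{(k)}_i)\le r_k^{\gamma}$. Since $\underline d^{f,g}_\mu(x)>0$, for $\nu_\mu$-a.e.\ $(u,s)$ the point $F(u,s)$ lies, for all large $k$, only in light balls, and for such a light index $i$ one has $E_{r_k}(u,s)\subseteq G^{(k)}_i\subseteq\pi F^{-1}B(F(u,s),3r_k)$. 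A Borel--Cantelli argument then reduces \eqref{eq:plancore} to the summability in $k$ of
\[\sum_{i\ \text{light}}\ \mu\Big(\big\{u\in G^{(k)}_i:\ T^ju\in G^{(k)}_i\ \text{for some}\ J_p\le j\le\mu(G^{(k)}_i)^{-1+\eps}\big\}\Big),\]
$J_p\to\infty$ being the number of base-steps corresponding to flow-time $p$. Each summand is at most $\sum_j\mu(G^{(k)}_i\cap T^{-j}G^{(k)}_i)$; approximating $\un_{G^{(k)}_i}$ from above by a Lipschitz function supported in a $\delta_k$-neighbourhood (Lipschitz norm $O(\delta_k^{-1})$, and $(G^{(k)}_i)^{\delta_k}\subseteq\pi F^{-1}B(z^{(k)}_i,2r_k+C\delta_k)$) and invoking the super-polynomial decay of correlations gives
\[\mu\big(G^{(k)}_i\cap T^{-j}G^{(k)}_i\big)\ \le\ \mu\big((G^{(k)}_i)^{\delta_k}\big)^2+O\big(\delta_k^{-2}\big)\,\theta_j .\]
Summing over $j$ — using that, after cutting at $J_p$, the tail $\sum_{j\ge J_p}\theta_j$ is smaller than any power of $r_k$ — and over $i$ — using bounded overlap in $X$, which costs only a factor $O(r_k^{-1})$ because $\int\phi\,d\mu<\infty$, together with the bound $\mu((G^{(k)}_i)^{\delta_k})\le r_k^{\gamma-o(1)}$ valid uniformly over light indices — one is left with a bound $O(r_k^{\beta})$, $\beta>0$, for a suitable $\delta_k$ equal to a small power of $r_k$; this is summable.

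The main obstacle is exactly this last quantitative step: the error coming from the decay of correlations gets multiplied both by the number $O(r_k^{-N})$ of balls and by $\delta_k^{-2}$, so it is essential to cut the sum over $j$ at the growing threshold $J_p$ — which is why the non-instantaneous return time is the right object here — and to use that $\theta_j$ decays faster than every polynomial (mere polynomial decay would not suffice). Two further points need care but are routine: the uniform control of $\mu((G^{(k)}_i)^{\delta_k})$ over light indices, which genuinely uses $\underline d^{f,g}_\mu(x)>0$ and the restriction to light balls, and the Bowen--Walters geometry — the Lipschitz behaviour of $\pi$ and $F$ and the identification $(u,\phi(u))\sim(Tu,0)$ — which only affects $\nu_\mu$-null sets. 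Apart from these, the argument runs parallel to the discrete-time observation case of \cite{prfo} and to \cite{MR1833809}.
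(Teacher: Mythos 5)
Your overall strategy --- reduce to the base map $T$ via the suspension structure, compare $\tau^{\Psi,f,\star}_{r,p}$ with a return time of $T$ to $\pi\tilde{f}^{-1}B(\tilde{f}(u,s),r)$, and run a Borel--Cantelli argument driven by the super-polynomial decay of correlations --- is exactly the paper's. However, there is a genuine gap at the point you yourself flag as ``the main obstacle''. In your Borel--Cantelli over $k$, the parameter $p$ (and hence $J_p\approx p/\int\phi\,d\mu$) is \emph{fixed}, because in $\underline{R}_\star^{\Phi,\tilde{f}}(y)=\lim_{p\to\infty}\liminf_{r\to0}(\cdots)$ the limit in $p$ is taken only after $r\to0$. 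Therefore the tail $\sum_{j\geq J_p}\theta_j$ is a constant depending on $p$ but not on $k$, and the term $\delta_k^{-2}\sum_{j\geq J_p}\theta_j$ diverges like $\delta_k^{-2}$ as $k\to\infty$ (and is then further multiplied by the number of balls). The assertion that the tail is ``smaller than any power of $r_k$'' is false for a constant cutoff; super-polynomial decay of $\theta_j$ does not help when the summation begins at a bounded index.

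The paper closes this gap with a bootstrap that your one-shot Borel--Cantelli lacks. Lemma~\ref{artoosfthesepositive} first proves $\underline{R}_\star^{\Phi,\tilde{f}}(y)\geq a/4>0$ a.e.\ on $\{\underline{d}^{f,g}_\mu>a\}$; the crucial trick there is to couple the radius to the time, taking $r_n=n^{-4/a}$, so that the correlation error $\theta_n\,r_n^{-(N+b+2)}=\theta_n\,n^{4(N+b+2)/a}$ is summable --- this is exactly where super-polynomial decay is used. This qualitative positivity forces $\tau_{r,p}^{\Phi,\tilde{f},\star}(y)\geq r^{-a}$ for small $r$, and it is this polynomial lower bound, not the fixed $J_p$, that is then inserted as the lower endpoint $r^{-\delta}$ of the time sum in Lemma~\ref{artoosftheseintervalle}. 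With a lower cutoff growing polynomially in $r^{-1}$, the tail $\sum_{n\geq r^{-\delta}}\theta_n=O(r^{\delta(k-1)})$ for every $k$, which dominates both the $\delta_k^{-2}$ factor and the number-of-balls factor. If you add such a positivity lemma and replace $J_p$ by a power of $r_k^{-1}$ in your cutoff, your argument goes through; without it the key estimate fails.
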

\begin{remark}We notice that the previous assumptions on the flow $\Psi$ are satisfied, for example, when the flow $\Psi$ is an Anosov flow and when the measure $\mu$ is an equilibrium state of an H\"older potential.
\end{remark} 
We recall the definition of an Anosov flow:
\begin{definition} A differentiable flow $\Psi_t:M\rightarrow M$ is called an Anosov flow (or uniformly hyperbolic) if it exists a constant $0<\lambda<1$, a constant $C>0$ and a decomposition of the tangent bundle:
\[TM=E^u\oplus E^0\oplus E^s\qquad(i.e.\, \forall x\in M, T_xM=E^s_x+E^0_x+E^u_x)\]
where $E^0_x$ is generated by $\frac{d}{dt}(\Psi_tx)\vert_{t=0}$, $E_u$ and $E_s$ are sub-bundles $D\psi_t$-invariant for all $t\in\R$ (i.e. $D_x\Psi_tE^u_x=E^u_{\Psi_tx}$ and $D_x\Psi_tE^s_x=E^s_{\Psi_tx}$ for all $t\in\R$) such that for all $x\in M$ and for all $t>0$
\[\|d_x\Psi_tv\|\leq C\lambda^t\|v\|\textrm{ for all $v\in E^s_x$}\]
and
\[\|d_x\Psi_{-t}v\|\leq C\lambda^t\|v\|\textrm{ for all $v\in E^u_x$}.\]
\end{definition}
When the observation is the identity map, we obtain an equality between recurrence rate and dimension (and for Anosov flows we obtain the Theorem~\ref{artoosftheseintroideg}):
\begin{corollary}\label{artoosfthesecoroid}
Let $\Psi$ a differentiable flow of $M$ and $\nu$ an invariant probability measure for $\Psi$. If $\Psi$ is metrically isomorphic to a suspension flow $\Phi$ over $T$ such that $T$ has a super-polynomial decay of correlations and if the isomorphism $g$ linking these two flows is Lipschitz, then
\[\underline{R}^{\Psi}(x)= \underline{d}_\nu(x)-1\qquad and\qquad \overline{R}^{\Psi}(x)= \overline{d}_\nu(x)-1\]
for $\nu$-almost every $x\in M$ not periodic and such that $\underline{d}_\nu(x)>1$.
\end{corollary}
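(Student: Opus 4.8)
The plan is to assemble three ingredients: the upper bound from Corollary~\ref{artoosfthesecorid}, the lower bound from Theorem~\ref{artoosfthesethrd} applied to the trivial observation $f=\mathrm{id}$, and a computation identifying the local projection dimension of the associated suspension flow with $\underline d_\nu(x)-1$ (respectively $\overline d_\nu(x)-1$). For the upper bound there is nothing to do: a non-periodic point is in particular not a fixed point, so Corollary~\ref{artoosfthesecorid} gives directly $\underline R^\Psi(x)\le\underline d_\nu(x)-1$ and $\overline R^\Psi(x)\le\overline d_\nu(x)-1$ for $\nu$-almost every such $x$.

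The first real step is to replace $\underline R^\Psi(x)$ and $\overline R^\Psi(x)$ by the non-instantaneous rates $\underline R_\star^{\Psi,\mathrm{id}}(x)$ and $\overline R_\star^{\Psi,\mathrm{id}}(x)$. Fix a non-periodic $x$. By the flow box theorem there are a neighbourhood $U\ni x$, a time $T_0>0$ and $\gamma>0$ with $d(z,\Psi_tz)\ge\gamma t$ for $z\in U$, $0<t<T_0$; hence for $r$ small the orbit of $x$ leaves $B(x,r)$ at a time $\eta_r(x)\le r/\gamma$ and, being a straight line in box coordinates meeting the nearly ellipsoidal image of $B(x,r)$ in an interval, it stays outside $B(x,r)$ for every $t\in(\eta_r(x),T_0)$. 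Moreover, for a fixed $p>T_0$, if the orbit returned to $B(x,r_k)$ at times $t_k\in[T_0,p]$ along a sequence $r_k\to0$, compactness would give a subsequential limit $t_\infty\in[T_0,p]$ with $\Psi_{t_\infty}x=x$, contradicting non-periodicity. Therefore, for each $p$, as soon as $r$ is small enough one has $\tau_r^\Psi(x)=\tau_{r,p}^{\Psi,\mathrm{id},\star}(x)$; taking $\liminf_{r\to0}$ and then $p\to\infty$ (and likewise $\limsup$) yields $\underline R^\Psi(x)=\underline R_\star^{\Psi,\mathrm{id}}(x)$ and $\overline R^\Psi(x)=\overline R_\star^{\Psi,\mathrm{id}}(x)$.

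Next I would apply Theorem~\ref{artoosfthesethrd} with $f=\mathrm{id}$, which is Lipschitz, and with the suspension flow $\Phi$ over $T$ and Lipschitz isomorphism $g$ supplied by the hypotheses: for $\nu$-almost every $x$ with $\underline d^{\mathrm{id},g}_\mu(x)>0$,
\[\underline R_\star^{\Psi,\mathrm{id}}(x)\ge\underline d^{\mathrm{id},g}_\mu(x)\qquad\text{and}\qquad\overline R_\star^{\Psi,\mathrm{id}}(x)\ge\overline d^{\mathrm{id},g}_\mu(x).\]
It then remains to show $\underline d^{\mathrm{id},g}_\mu(x)=\underline d_\nu(x)-1$ and $\overline d^{\mathrm{id},g}_\mu(x)=\overline d_\nu(x)-1$ for $\nu$-almost every $x$. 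Writing $y=g^{-1}(x)=(u,s)\in Y$, one uses that $g$ is bi-Lipschitz to sandwich $g^{-1}B(x,r)$ between two Bowen--Walters balls $B_Y(y,c_1r)$ and $B_Y(y,c_2r)$; for $r$ small these project under $\pi$ to sets sandwiched between $B_X(u,c_1'r)$ and $B_X(u,c_2'r)$, while Fubini gives $\nu_\mu(B_Y(y,r))=\bigl(\int\phi\,d\mu\bigr)^{-1}(\mu\otimes\mathrm{Leb})(B_Y(y,r))$, comparable for small $r$ to $\mu(B_X(u,r))\cdot r$. Since $\nu(B(x,r))=\nu_\mu(g^{-1}B(x,r))$, dividing logarithms by $\log r$ and letting $r\to0$ (constant factors on the radius being harmless in the liminf and limsup) yields exactly $\underline d_\nu(x)=\underline d^{\mathrm{id},g}_\mu(x)+1$ and $\overline d_\nu(x)=\overline d^{\mathrm{id},g}_\mu(x)+1$.

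Finally I would restrict to the full-measure set of non-periodic $x$ with $\underline d_\nu(x)>1$, on which $\underline d^{\mathrm{id},g}_\mu(x)=\underline d_\nu(x)-1>0$ so that Theorem~\ref{artoosfthesethrd} applies, and combine it with the previous two steps and Corollary~\ref{artoosfthesecorid}:
\[\underline R^\Psi(x)=\underline R_\star^{\Psi,\mathrm{id}}(x)\ge\underline d^{\mathrm{id},g}_\mu(x)=\underline d_\nu(x)-1\ge\underline R^\Psi(x),\]
forcing $\underline R^\Psi(x)=\underline d_\nu(x)-1$, with the identical chain giving $\overline R^\Psi(x)=\overline d_\nu(x)-1$. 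The main obstacle is the third step: the comparison of $g^{-1}B(x,r)$ with Bowen--Walters balls and the ensuing identification of the projection dimension with $\underline d_\nu-1$, which requires the isomorphism $g$ to be bi-Lipschitz (available for hyperbolic flows through the symbolic coding cited earlier) and a careful treatment of the Bowen--Walters metric near the base and the ceiling of $Y$; by comparison the flow-box and compactness argument of the second step is elementary, but it is precisely there that the non-periodicity hypothesis is used.
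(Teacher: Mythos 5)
The overall strategy is the paper's --- upper bound from Corollary~\ref{artoosfthesecorid}, passage from $\underline R^\Psi$ to $\underline R_\star^{\Psi,\mathrm{id}}$ via the flow-box theorem and non-periodicity, lower bound from Theorem~\ref{artoosfthesethrd} with $f=\mathrm{id}$, and the dimension identity $\underline d^{\mathrm{id},g}_\mu(x)=\underline d_\nu(x)-1$. The gap is precisely where you flag it: your route to the dimension identity sandwiches $g^{-1}B(x,r)$ between Bowen--Walters balls, which forces $g$ to be bi-Lipschitz, whereas the corollary assumes only that $g$ is Lipschitz. A one-sided Lipschitz bound gives $B_Y(y,r/L)\subset g^{-1}B(x,r)$ but no inclusion of $g^{-1}B(x,r)$ in a Bowen--Walters ball of comparable radius; without the converse inclusion neither your sandwich for $\pi g^{-1}B(x,r)$ nor the comparison $\nu(B(x,r))\asymp r\,\mu(\pi g^{-1}B(x,r))$ follows. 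And since the corollary is not restricted to hyperbolic flows, you cannot appeal to the symbolic coding to supply bi-Lipschitzness without strengthening the hypotheses.

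The paper closes this step without touching the Bowen--Walters metric and without any metric hypothesis on $g$ (the Lipschitz assumption is consumed elsewhere, in making $\tilde f=\mathrm{id}\circ g$ Lipschitz so that Theorem~\ref{artoosfthesethrd} applies). Since $g$ conjugates the flows, $g(u,t+s)=\Psi_s(g(u,t))$, and the flow-box estimate $\gamma_1 s\le d(z,\Psi_s z)\le\gamma_2 s$ for $z$ near a non-fixed point $x$ and $0<s\le\beta$ gives directly: if $g(u,t)\in B(x,r)$ then $g(u,t+s)\notin B(x,r)$ for all $s\in[2r/\gamma_1,\beta]$, so the fiber $\{t\in(0,\phi(u)):g(u,t)\in B(x,r)\}$ is a union of boundedly many intervals of length at most $2r/\gamma_1$; conversely, if $u\in\pi g^{-1}B(x,r)$ then $\{t:g(u,t)\in B(x,2r)\}$ contains an interval of length at least $r/\gamma_2$. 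Fubini applied to $\nu(B(x,r))=\nu_\mu(g^{-1}B(x,r))=\int_X\mathrm{Leb}\{t:g(u,t)\in B(x,r)\}\,d\mu(u)$ then yields $\nu(B(x,r))\le C r\,\mu(\pi g^{-1}B(x,r))$ and $\nu(B(x,2r))\ge(r/\gamma_2)\mu(\pi g^{-1}B(x,r))$, and dividing logarithms by $\log r$ gives the dimension identity. Replacing your third step with this direct Fubini computation removes the unjustified bi-Lipschitz assumption and brings the argument in line with the stated hypotheses; your remaining steps are correct and match the paper's.
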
 
For $C^1$ flow with strictly positive entropy (for flows, the entropy correspond to the entropy of the time-1 map, i.e. $h_\nu(\Psi)=h_\nu(\Psi_1)$), the lower local dimension satisfies another condition:
\begin{lemma}\label{artoosftheselemdun}
Let $\Psi$ a flow $C^1$ on $M$ and let $\nu$ an invariant probability measure such that $h_\nu(\Psi)>0$. Then, for $\nu$-almost every $x\in M$
\[\underline{d}_\nu(x)>1.\]
\end{lemma}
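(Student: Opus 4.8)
The plan is to deduce the bound from the \emph{local product structure} of a flow-invariant measure in flow box coordinates together with the Brin--Katok formula for the entropy; in particular no hyperbolicity will be used, only that $\Psi$ is $C^1$ and $M$ is compact.

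First I would reduce to the case where $\nu$ is ergodic (this is the only case needed in the sequel, where $\nu$ is an equilibrium state). Write $h:=h_\nu(\Psi)=h_\nu(\Psi_1)>0$. A $\Psi$-invariant probability measure carried by the set $\mathrm{Fix}$ of fixed points has zero entropy, so ergodicity forces $\nu(\mathrm{Fix})=0$; hence for $\nu$-almost every $x$ the point $x$ is regular and, by the flow box theorem \cite{MR0361232}, admits a $C^1$ chart $\psi_x\colon D\times(-a,a)\to U_x\subset M$ of the form $\psi_x(w,s)=\Psi_s(\sigma_x(w))$, in which $\Psi$ acts by $s\mapsto s+t$ and $x$ corresponds to $(w_x,0)$; being a $C^1$ diffeomorphism onto its image, $\psi_x$ is bi-Lipschitz with some finite constant $L_x$. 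By invariance, $(\psi_x^{-1})_\ast(\nu|_{U_x})$ disintegrates as $\lambda_x\otimes Leb$ on $D\times(-a,a)$ for a Borel measure $\lambda_x$ on $D$, and combining this with the bi-Lipschitz property one gets, for $r$ small, constants $0<c_x\le C_x<\infty$ with
\[
c_x\,\lambda_x\big(B_D(w_x,c_x r)\big)\,r\;\le\;\nu\big(B(x,r)\big)\;\le\;C_x\,\lambda_x\big(B_D(w_x,C_x r)\big)\,r,
\]
so that $\underline d_\nu(x)=1+\underline d_{\lambda_x}(w_x)$. It remains to prove $\underline d_{\lambda_x}(w_x)>0$ for $\nu$-almost every $x$.

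To this end I would use two inputs. By Gronwall's inequality there is $L<\infty$ with $d(\Psi_s y,\Psi_s x)\le e^{Ls}d(x,y)$ for all $x,y\in M$ and $s\ge0$. Fixing $a'\in(0,a)$ with $a'\sup_M\|X\|\le\eps$ (where $X$ generates $\Psi$), one checks that if $w\in B_D(w_x,r)$ then the segment $\psi_x(\{w\}\times(-a',a'))$ stays within $2\eps$ of $\{\Psi_s x:0\le s\le t\}$ for every $t\le t(r):=\frac1L\log\frac{\eps}{L_x r}$, so the tube $\psi_x\big(B_D(w_x,r)\times(-a',a')\big)$ lies inside the dynamical ball $B^\Psi_{[0,t(r)]}(x,2\eps):=\{y:d(\Psi_s y,\Psi_s x)<2\eps,\ 0\le s\le t(r)\}$. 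On the other hand, the Brin--Katok formula for the entropy of the flow gives, for $\nu$-almost every $x$, every small $\eps>0$ and every $\delta>0$, that $\nu\big(B^\Psi_{[0,t]}(x,2\eps)\big)\le e^{-t(h-\delta)}$ for all large $t$. Since the $\nu$-measure of the tube equals $\lambda_x\big(B_D(w_x,r)\big)\cdot 2a'$ by the product structure, for $r$ small we obtain
\[
\lambda_x\big(B_D(w_x,r)\big)\;\le\;\frac{1}{2a'}\,e^{-t(r)(h-\delta)}\;=\;\frac{1}{2a'}\Big(\frac{L_x r}{\eps}\Big)^{(h-\delta)/L},
\]
whence $\underline d_{\lambda_x}(w_x)\ge(h-\delta)/L$; letting $\delta\to0$ gives $\underline d_{\lambda_x}(w_x)\ge h/L>0$, and therefore $\underline d_\nu(x)\ge 1+h/L>1$ for $\nu$-almost every $x$.

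The main obstacle is the second paragraph, i.e.\ making the local product structure rigorous: one must justify that in flow box coordinates a $\Psi$-invariant measure disintegrates with conditionals equal to arc length along orbits, and control the bi-Lipschitz constants of the chart, which in turn relies on $\nu$-a.e.\ point being regular ($\|X(x)\|>0$) — this is exactly where the hypothesis $h_\nu(\Psi)>0$ is needed beyond the soft entropy estimate. The entropy input itself is mild: only the crude estimate $d(\Psi_s y,\Psi_s x)\le e^{Ls}d(x,y)$ is used, so neither Ruelle's inequality nor any hyperbolicity enters.
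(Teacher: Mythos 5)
Your proof is correct, and it takes a genuinely different route from the paper's. You exploit the local product structure of a flow-invariant measure in a flow box, $\nu\simeq\lambda_x\otimes\mathrm{Leb}$, use the global Lipschitz bound $d(\Psi_s y,\Psi_s x)\le e^{Ls}d(x,y)$ to embed the tube $\psi_x(B_D(w_x,r)\times(-a',a'))$ inside the flow Bowen ball $B^\Psi_{[0,t(r)]}(x,2\eps)$ with $t(r)=L^{-1}\log(\eps/(L_x r))$, and then control the measure of that Bowen ball by the Brin--Katok theorem; this yields the quantitative bound $\underline d_\nu(x)\ge 1+h_\nu(\Psi)/L$. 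The paper instead builds a Poincar\'e section $Z$ from finitely many transversal discs, works with the first-return map $T$ on $Z$ together with a finite partition $\xi$ whose boundary has controlled $\mu$-measure, compares metric balls in $Z$ to refined cylinders $\xi_n(x)$ through a Borel--Cantelli argument, and invokes Shannon--McMillan--Breiman for $(Z,T,\mu)$, obtaining $\underline d_\nu(x)\ge 1+\frac{h_\mu(T)}{2(1+\log L)}$. Your approach sidesteps the construction of the boundary-controlled partition (Barreira--Saussol's Lemma~5) at the cost of invoking Brin--Katok for flows (equivalently, Brin--Katok for the time-one map plus the standard comparison of flow and map Bowen balls). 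Two caveats worth recording. First, the quantifiers in your Brin--Katok step should be nested as ``fix $\delta>0$, then choose $\eps$ small depending on $\delta$ (and $x$), then take $t$ large,'' rather than ``for every small $\eps$ and every $\delta$''; this does not affect the argument since you fix $\delta$, obtain a bound with $\eps$-dependent constants, and only then let $\delta\to0$. Second, as you rightly flag, both your proof and the paper's really establish the conclusion for ergodic $\nu$, or more generally at $\nu$-a.e.\ $x$ whose ergodic component has positive entropy: $h_\nu(\Psi)>0$ alone does not preclude, say, a convex combination of a positive-entropy measure with a Dirac mass on a fixed point, on which $\underline d_\nu$ may fail to exceed~$1$. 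The paper does not address this, but the lemma is only applied to equilibrium states, which are ergodic, so the gap is harmless in context.
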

\begin{proof}
Let $\eps>0$. For every $x\in M$, we consider a submanifold $N_x$ of dimension $\dim M-1$ and transverse to the flow $\Psi$. We consider now the family of balls $\{D_\delta(x)\}_{\eps/2\leq\delta\leq\eps}$ of $N_x$, of center $x$ and diameter $\delta$. Using the construction from Lemma 5 of  \cite{MR1833809}, we can prove that it exists $\delta_0\in[\eps/2,\eps]$ such that $D_{\delta_0}(x)$ satisfies for every $r>0$ small enough:
\begin{equation}\label{artoosftheseeqmud}
\mu\left(\left\{y\in N_x:d(y,\partial D_{\delta_0}(x))<r\right\}\right)\leq cr
\end{equation}
where $c$ is a constant depending only on $\eps$ and $x$ and where $\mu$ is the induced measure by $\nu$ on $D_\eps(x)$. We denoted $D(x)$ the set $D_{\delta_0}(x)$ and we consider the cylinders $C(x)=\bigcup_{0\leq t<\eps}\Psi_tD(x)$. Since the cylinders $C(x)$ cover $M$, we can extract a finite subcovering $\bigcup_{i\in I}C_i$. Let $Z=\bigcup_{i\in I} D_i$ where $D_i$ is the disk associated to the cylinder $C_i$ for $i\in I$.

We define the transfer function $\varsigma: Z\rightarrow\R^+$ by
\[\varsigma(x)=\min\{t>0:\Psi_tx\in Z\},\]
and the transfer map $T:Z\rightarrow Z$ by
\[Tx=\Psi_{\varsigma(x)}x.\]
We remark that if $D_i\cap T^{-1}D_j\neq\emptyset$ then $T\vert_{D_i\cap T^{-1}D_j}$ is $L$-Lipschitz ($\Psi_t$ being $C^1$ on $M$). Since $\mu$ is the invariant measure for $T$ induced by $\nu$, and since $h_\nu(\Psi)>0$ we have $h_\mu(T)>0$.

Let $\xi$ a partition of $Z$ finer than $\{D_i\cap T^{-1}D_j\}_{i,j\in I}$, of diameter arbitrary small and satisfying for every $0<r<\diam \xi$
\begin{equation}\label{artoosftheseeqmufront}
\mu(x\in Z:d(x,\partial \xi)<r)\leq c_1r
\end{equation}
where $c_1$ is a positive constant depending only on the diameter of $\xi$ (the construction of the partition comes from \cite{MR1833809} and \eqref{artoosftheseeqmud}). Let us consider the set $A_n=\{x\in Z:d(T^nx,\partial\xi)<e^{-n}\}$, by \eqref{artoosftheseeqmufront} we have that for every $n$ large enough $\mu(A_n)\leq c_1 e^{-n}$. Then $\sum_{n\in\N}\mu(A_n)<+\infty$ and by the Borel-Cantelli lemma , for $\mu$-almost every $x$ in $Z$, it exists $c(x)>0$ such that for all $n\in\N$, $d(T^nx,\partial\xi)>c(x)e^{-n}$. This gives us that for $\mu$-almost every $x\in Z$ and all $n\in\N$
\[B_Z(x,c(x)L^{-n}e^{-n})\subset \xi_n(x).\]
Finally, we observe that for every $x\in Z$, $t$ and $r$ small enough, denoting $y=\Psi_t x$ we have
\[B_M(y,r)\subset\bigcup_{|s|<r}\Psi_{t+s}(B_Z(x,L_1r))\]
where $L_1=\sup_{0\leq s<r}\{\textrm{Lipschitz constant of $\Psi_{-s}$}\}$. Then, for $\mu$-almost every $x$, for all $t$ small enough and $y=\Psi_tx$, denoting $r_n=c(x)L_1^{-1}L^{-n}e^{-n}$ for $n$ large enough we have
\begin{eqnarray*}
\nu(B_M(y,r_n))&\leq&2r_n\mu(B_Z(x,c(x)L^{-n}e^{-n}))\\
&\leq&2r_n\mu(\xi_n(x)).
\end{eqnarray*}
If the diameter of $\xi$ is small enough then $h_\mu(T,\xi)>\frac{h_\mu(T)}{2}>0$ and for $n$ large enough $\mu(\xi_n)\leq e^{-nh_\mu(T)/2}$ and then \[\underline{d}_\nu(y)\geq 1+\frac{h_\mu(T)}{2(1+\log L)}>1.\]
Finally, choosing  $\eps$ arbitrary small, we obtain that for $\nu$-almost every $y\in M$
\[\underline{d}_\nu(y)>1.\]
\end{proof}
\begin{remark}Using finer inequalities than in the previous proof, it seems that we will obtain that for $\nu$-almost every $x\in M$
\[\underline{d}_\nu(x)\geq1+ h_\nu(\Psi)\left(\frac{1}{\Lambda_u(x)}-\frac{1}{\Lambda_s(x)}\right)\]
where $\Lambda_u(x)$ is the greatest Lyapunov exponent of the flow and where $\Lambda_s(x)$ the smallest Lyapunov exponent.
\end{remark}
We are now ready to prove Theorem~\ref{artoosftheseintroideg}:
\begin{proof}[Proof of Theorem~\ref{artoosftheseintroideg}] This theorem comes from Corollary~\ref{artoosfthesecoroid} and Lemma~\ref{artoosftheselemdun}. Indeed, an Anosov flow is metrically isomorphic to an hyperbolic symbolic flow (a suspension flow whose base is a subshift of finite type) \cite{MR0339281}. Since $\nu$ is an equilibrium state of an H\"older potential, $\mu$ is also an equilibrium state for the dynamical system $(X,\mathcal{A},T)$, then the decay of correlations is super-polynomial and the entropy is strictly positive. We notice that the metric on $X$ can be chosen such that the isomorphism linking these two flows is Lipschitz \cite{MR0339281}.

Finally, for an Anosov flow, the number of periodic orbits is enumerable and since the measure is an equilibrium state, the set of periodic points has zero measure. 
\end{proof}
We emphasize that this result is an extension of the result for suspension flow of \cite{MR1833809}.
In fact, Theorem~\ref{artoosfthesethrd} comes from the following theorem for suspension flow:
\begin{theorem}\label{artoosftheseth2}
Let $(X,\mathcal{A},\mu,T)$ a dynamical system with a super-polynomial decay of correlations and $\Phi$ the suspension  flow over $T$ with height function $\phi$. Let $\tilde{f}=f\circ g$ a Lipschitz function. Then, we have
\[\underline{R}_\star^{\Phi,\tilde{f}}(y)\geq\underline{d}^{f,g}_\mu(g(y))\qquad and\qquad \overline{R}_\star^{\Phi,\tilde{f}}(y)\geq \overline{d}^{f,g}_\mu(g(y))\]
for $\mu\otimes Leb$-almost every $y\in Y$ such that $\underline{d}^{f,g}_\mu(g(y))>0$.
\end{theorem}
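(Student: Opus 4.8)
The plan is to prove the equivalent statement that for \emph{every fixed} $p\in\N$ and every rational $\beta$ and $\eps>0$ one has, for $\nu_\mu$-a.e.\ $y$ in the set $G_{\beta,\eps}:=\{y\in Y:\underline d^{f,g}_\mu(g(y))>\beta+\eps\}$, the inequality $\tau^{\Phi,\tilde f,\star}_{r,p}(y)\ge r^{-\beta}$ for all sufficiently small $r$; since $p\mapsto\liminf_{r}\frac{\log\tau^{\Phi,\tilde f,\star}_{r,p}}{-\log r}$ is nondecreasing, a countable union over $\beta,\eps$ then yields $\underline R^{\Phi,\tilde f}_\star(y)\ge\underline d^{f,g}_\mu(g(y))$, and the limsup version is obtained identically after replacing the fixed exponent $\beta$ by the $r$-dependent threshold $\mu(\pi\tilde f^{-1}B(\tilde f(y),r))^{-1+\eps}$ and partitioning $Y$ according to the dyadic size of $\mu(\pi\tilde f^{-1}B(\tilde f(y),r_k))$. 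By monotonicity of $r\mapsto\tau^{\Phi,\tilde f,\star}_{r,p}$ it suffices to work along $r_k=e^{-k}$ and interpolate, so the whole proof reduces to a Borel--Cantelli estimate: $\sum_k\nu_\mu(B_k)<\infty$, where $B_k:=G_{\beta,\eps}\cap\{\tau^{\Phi,\tilde f,\star}_{r_k,p}<r_k^{-\beta}\}$.

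To bound $\nu_\mu(B_k)$ I would first turn the self-return into a hitting problem: cover the compact set $\tilde f(Y)\subset\R^N$ by balls $U_1,\dots,U_K$ of radius $r_k$ with bounded overlap ($K\lesssim r_k^{-N}$); if $y\in B_k$ and $\tilde f(y)\in U_i$ then the orbit point realizing the return lies in $\tilde f^{-1}(3U_i)$. Next I would use the suspension structure (the content of Lemmas~\ref{artoosfthesepositive} and \ref{artoosftheseintervalle}): writing $y=(u,s)$, and using that $\Phi_t$ crosses the base at times comparable to the Birkhoff sums $S_n\phi(u)$ (one may assume $0<\inf\phi\le\sup\phi<\infty$, as in the applications), a return at time $t\in(p,r_k^{-\beta})$ forces $T^nu\in\pi\tilde f^{-1}(3U_i)$ for some $n$ in a window $[n_{\min}(p),N_k]$ with $N_k\asymp r_k^{-\beta}$. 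The genuinely new difficulty compared with the identity observation of \cite{MR1833809} is that $\tilde f$ does not factor through $\pi$, so throwing the fibre coordinate away loses a power of $r_k$; I would avoid this by \emph{weighting by the fibre}. Introduce $\varphi_i(u):=\int_0^{\phi(u)}\chi_i(\tilde f(u,s))\,ds$ for Lipschitz bumps $\chi_i$ with $\un_{U_i}\le\chi_i\le\un_{(U_i)_\delta}$, and Lipschitz majorants $\chi_{\pi_i}$ of $\un_{\pi\tilde f^{-1}(3U_i)}$ realized likewise as fibre integrals; these are Lipschitz, and crucially $\sum_i\varphi_i\le C\phi$ pointwise, so $\sum_i\int\varphi_i\,d\mu$ stays bounded uniformly in $k$. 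One then gets $\nu_\mu(B_k)\lesssim\frac1{\bar\phi}\sum_i\sum_{n=n_{\min}(p)}^{N_k}\int_X\varphi_i\,(\chi_{\pi_i}\circ T^n)\,d\mu$.

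The final step inserts the super-polynomial decay of correlations of $T$: for each $i,n$ one writes $\int\varphi_i(\chi_{\pi_i}\circ T^n)\,d\mu\le\int\varphi_i\,d\mu\int\chi_{\pi_i}\,d\mu+\|\varphi_i\|\,\|\chi_{\pi_i}\|\,\theta_n$. The product terms sum, using $\sum_i\int\varphi_i\,d\mu\lesssim1$ and the good-set bound $\int\chi_{\pi_i}\,d\mu\lesssim\mu(\pi\tilde f^{-1}B(\tilde f(y_i),Cr_k))\lesssim r_k^{\beta+\eps}$ for the $\lesssim r_k^{-N}$ indices $i$ meeting $G_{\beta,\eps}$, to $\lesssim N_k\cdot r_k^{\beta+\eps}\lesssim r_k^{\eps}$, which is summable in $k$; the error terms are handled by regrouping the cover into boundedly many subfamilies of pairwise disjoint (enlarged) balls — on each subfamily the relevant Lipschitz norms do not accumulate with $K$ — and exploiting $n^j\theta_n\to0$ for all $j$ together with a careful matching of the approximation scale $\delta$ to $r_k$ and of $n_{\min}(p)$ to $p$. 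I also need the mild boundary-type fact that $\pi\tilde f^{-1}$ of a slightly enlarged ball has $\mu$-measure comparable to that of the ball, which follows from the Lipschitz control of $\tilde f$ and of the Bowen--Walters metric.

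I expect the main obstacle to be exactly this error-summation: keeping the decay-of-correlations errors summable simultaneously over all $\asymp r_k^{-N}$ balls and all $\asymp r_k^{-\beta}$ times, while preserving the per-ball smallness that makes the product terms summable — this is where the super-polynomial (not merely polynomial) rate, the regrouping, and the fibre-measure normalization keeping $\sum_i\int\varphi_i\,d\mu$ bounded are all essential, and it is also where the hypothesis that $g$ and $f$ (hence $\tilde f$) are Lipschitz is used. Once the estimate $\sum_k\nu_\mu(B_k)<\infty$ is established, Borel--Cantelli on $G_{\beta,\eps}$, the interpolation between consecutive $r_k$, the countable union over $\beta,\eps$, and passing $p\to\infty$ complete the proof; the argument for $\overline R^{\Phi,\tilde f}_\star$ against $\overline d^{f,g}_\mu$ is the same with the measure-dependent threshold.
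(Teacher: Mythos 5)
Your plan correctly identifies the high-level machinery (cover $+$ Lipschitz bump functions $+$ decay of correlations $+$ Borel--Cantelli), and the fibre-weighting device $\varphi_i(u)=\int_0^{\phi(u)}\chi_i(\tilde f(u,s))\,ds$ with $\sum_i\int\varphi_i\,d\mu\lesssim1$ is a nice way to organize the \emph{product} terms. But the argument as you have set it up has a genuine gap precisely at the step you flag as the ``main obstacle''. With $B_k=G_{\beta,\eps}\cap\{\tau^{\Phi,\tilde f,\star}_{r_k,p}<r_k^{-\beta}\}$ and a \emph{fixed} integer window $[n_{\min}(p),N_k]$, the decay-of-correlations error is of order
$\sum_i\sum_{n\ge n_{\min}(p)}\|\varphi_i\|\,\|\chi_{\pi_i}\|\,\theta_n\;\lesssim\;r_k^{-N}\cdot r_k^{-2}\cdot\sum_{n\ge n_{\min}(p)}\theta_n\;=\;C_p\,r_k^{-N-2}$,
which diverges in $k$ no matter how fast $\theta_n$ decays, because the tail sum $\sum_{n\ge n_{\min}(p)}\theta_n$ is a positive constant once $p$ is frozen. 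The regrouping into boundedly many subfamilies of disjoint balls does not repair this: after summing the $\chi_i$ and $\chi_{\pi_i}$ over a disjoint subfamily the Lipschitz norms are still $\asymp r_k^{-1}$ (so the error is still $\gtrsim r_k^{-2}$ per subfamily per $n$, not summable), and, worse, the product term $\int(\sum_i\varphi_i)\,d\mu\cdot\int(\sum_i\chi_{\pi_i})\,d\mu$ then loses the per-ball pairing and becomes of order $r_k^{\beta+\eps-N}$, no longer summable after multiplying by $N_k\asymp r_k^{-\beta}$. You do mention ``matching $n_{\min}(p)$ to $p$'', but since $p$ is held fixed throughout the Borel--Cantelli argument and only sent to $\infty$ at the very end, this cannot make the time lower bound grow with $1/r_k$.

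What is missing is the paper's two-step \emph{bootstrap}. The paper first proves a positivity lemma (Lemma~\ref{artoosfthesepositive}): for each fixed $n$ it tests only a single radius $r_n=n^{-4/a}$, so the bad event has probability $\lesssim n^{(N+b+2)\cdot4/a}\theta_n+n^{-2}$, summable by super-polynomial decay; this yields $\underline R_\star^{\Phi,\tilde f}\ge a/4>0$ a.e.\ on $\{\underline d^{f,g}_\mu>a\}$. Only once this a priori positivity is known may one restrict, in the quantitative lemma (Lemma~\ref{artoosftheseintervalle}), to return times $t\in[r^{-\delta},\mu(\cdots)^{-1+\eps}]$ with a lower endpoint \emph{growing polynomially in $1/r$}; for $n\ge r^{-\delta}$ one has $\sum_{n\ge r^{-\delta}}\theta_n=O(r^{\delta(k-1)})$ for any chosen $k$, which beats the $r^{-N-2}$ factor. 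The second, smaller gap in your proposal is the line $\int\chi_{\pi_i}\,d\mu\lesssim r_k^{\beta+\eps}$: since $\underline d^{f,g}_\mu$ is a $\liminf$, this inequality is only guaranteed for $r$ small depending on the point; you need a further ``good set'' restriction (the paper's $G_1$, with the base points of the cover chosen inside $G$) to make it uniform in $i$. Both of these need to be supplied for the proof to close.
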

From now on without loss of generality let us assume that $\int_X\phi d\mu=1$ and so the invariant measure for the suspension flow is $\nu_\mu=\mu\otimes Leb$.
\section{Observations of the geodesic flow}\label{artoosfthesesecgeo}
Let $(M,h)$ a $C^\infty$ compact Riemannian manifold, i.e. $M$ is a $C^\infty$ compact manifold and $h$ is a $C^\infty$ Riemannian metric on $M$. The metric $h$ induces a distance $d$ on $M$. We suppose that the metric space $(M,d)$ is complete. So, by Hopf-Rinow Theorem $M$ is geodesically complete. Let $T^1M$ the unit tangent bundle of $(M,h)$. For $(p,v)\in T^1M$, $\gamma_{(p,v)}:\R\rightarrow M$ denotes the geodesic of initial conditions $\gamma_{(p,v)}(0)=p$ and $\dot\gamma_{(p,v)}(0)=v$. We now define the geodesic flow $\Psi_t:T^1M\rightarrow T^1M$ given by $\Psi_t(p,v)=(\gamma_{(p,v)}(t),\dot\gamma_{(p,v)}(t))$. Let $\Pi$ the canonical projection
\begin{eqnarray*}
\Pi\,\,:T^1M&\longrightarrow& M\\
(p,v)&\longmapsto& p.
\end{eqnarray*}
We recall that for every $(p,v)\in T^1M$ if $t>0$ is small enough
\begin{equation}\label{artoosftheseeqgeooo}
d(\Pi(p,v),\Pi(\Psi_t(p,v)))=t.
\end{equation}
Let $\nu$ an invariant probability measure for the geodesic flow.
\begin{theorem}\label{artoosfthesethgeogenintroid}Let $\Psi$ the geodesic flow defined on $T^1M$. Then for $\nu$-almost every $(p,v)\in T^1M$
\[\underline{R}^{\Psi}(p,v)\leq\underline{d}_{\nu}(p,v)-1\qquad and \qquad \overline{R}^{\Psi}(p,v)\leq\overline{d}_{\nu}(p,v)-1.\]
Moreover, if $M$ has a strictly negative curvature and if $\nu$ is an equilibrium state of an H\"older potential, then
\[\underline{R}^{\Psi}(p,v)=\underline{d}_{\nu}(p,v)-1\qquad and\qquad
\overline{R}^{\Psi}=\overline{d}_{\nu}(p,v)-1\]
for $\nu$-almost every $(p,v)\in T^1M$.
\end{theorem}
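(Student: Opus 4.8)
The plan is to deduce both assertions directly from the general results already established, after checking that the geodesic flow meets their hypotheses; there is essentially no new machinery needed here.

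\textbf{Upper bounds.} Since $(M,h)$ is $C^\infty$ and compact, the geodesic flow $\Psi_t(p,v)=(\gamma_{(p,v)}(t),\dot\gamma_{(p,v)}(t))$ is a $C^\infty$, hence differentiable, flow on the compact manifold $T^1M$. Moreover $\Psi$ has no fixed point: for every $(p,v)\in T^1M$ the curve $\gamma_{(p,v)}$ is a unit-speed geodesic, so $t\mapsto\Psi_t(p,v)$ is non-constant and $(p,v)$ is not a fixed point. Therefore Corollary~\ref{artoosfthesecorid} (equivalently Theorem~\ref{artoosftheseintroid}) applies to $\Psi$ on $T^1M$ with the invariant probability measure $\nu$, and yields $\underline{R}^{\Psi}(p,v)\le\underline{d}_\nu(p,v)-1$ and $\overline{R}^{\Psi}(p,v)\le\overline{d}_\nu(p,v)-1$ for $\nu$-almost every $(p,v)$, which is the first half of the statement.

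\textbf{Equalities in negative curvature.} Assume now that $M$ has strictly negative sectional curvature. By the classical theorem of Anosov, the geodesic flow of a compact Riemannian manifold of strictly negative curvature is an Anosov (uniformly hyperbolic) flow on $T^1M$. Hence, if $\nu$ is an equilibrium state of a H\"older potential for $\Psi$, Theorem~\ref{artoosftheseintroideg} applies verbatim with ambient manifold $T^1M$ and gives the reverse inequalities $\underline{R}^{\Psi}(p,v)\ge\underline{d}_\nu(p,v)-1$ and $\overline{R}^{\Psi}(p,v)\ge\overline{d}_\nu(p,v)-1$ for $\nu$-almost every $(p,v)$; together with the upper bounds above this is the desired pair of equalities. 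Recall that Theorem~\ref{artoosftheseintroideg} itself rests on Corollary~\ref{artoosfthesecoroid} combined with Lemma~\ref{artoosftheselemdun} (which gives $\underline{d}_\nu>1$ $\nu$-almost everywhere, since a H\"older equilibrium state of an Anosov flow has positive entropy) and on the fact that the countable union of periodic orbits carries no $\nu$-mass, so the genericity conditions of Corollary~\ref{artoosfthesecoroid} are automatically satisfied and no extra hypothesis on $(p,v)$ is needed.

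\textbf{Where the difficulty lies.} All the substantive work has already been carried out: the optimal escape function and the upper bound of Section~\ref{artoosfthesesecmajo}, and the suspension-flow lower bound of Section~\ref{artoosfthesesecsusp}. Consequently I do not anticipate a genuine obstacle in proving Theorem~\ref{artoosfthesethgeogenintroid}; the only points requiring a word of justification are that the geodesic flow is differentiable and fixed-point-free (immediate from the unit-speed property) and that strictly negative curvature forces it to be Anosov (Anosov's theorem), after which the two cited results close the argument.
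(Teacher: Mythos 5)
Your proof is correct and follows essentially the same route as the paper: reduce the upper bound to Corollary~\ref{artoosfthesecorid} (noting that a unit-speed geodesic flow has no fixed points, so the caveat there is vacuous), and obtain the equalities in negative curvature by invoking the Anosov property, which is precisely what Theorem~\ref{artoosftheseintroideg} packages from Corollary~\ref{artoosfthesecoroid} and Lemma~\ref{artoosftheselemdun}. The paper spells out the intermediate ingredients (symbolic flow isomorphism, super-polynomial decay, positive entropy, null set of periodic orbits) rather than citing Theorem~\ref{artoosftheseintroideg} outright, but the argument is the same.
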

\begin{proof} The first part of this theorem can be proved using the same technic of Corollary~\ref{artoosfthesecorid} and using \eqref{artoosftheseeqgeooo}.

For the second part of the theorem, since $M$ has a strictly negative curvature, $\Psi$ is an Anosov flow and is metrically isomorphic to an hyperbolic symbolic flow \cite{MR0339281}. Moreover, the metric on $X$ can be chosen such that the isomorphism linking these two flows is Lipschitz. Since $\nu$ is an equilibrium state of an H\"older potential, $\mu$ is also an equilibrium state for the dynamical system $(X,\mathcal{A},T)$, then the decay of correlations is super-polynomial. We can apply Corollary ~\ref{artoosfthesecoroid} and Lemma~\ref{artoosftheselemdun} (the entropy of the flow being strictly positive).
\end{proof}
We are now interested by the return time of the position on the manifold $M$, i.e. the return time of the projection of the flow on the manifold:
\begin{theorem}\label{artoosfthesethgeogen}For $\nu$-almost every $(p,v)\in T^1M$
\[\underline{R}^{\Psi,\Pi}(p,v)\leq\underline{d}_{\Pi_*\nu}(p)-1\qquad and \qquad \overline{R}^{\Psi,\Pi}(p,v)\leq\overline{d}_{\Pi_*\nu}(p)-1.\]
\end{theorem}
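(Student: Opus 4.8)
The plan is to deduce this from the general upper bound, Theorem~\ref{artoosftheseth1}, applied to the observation $f=\Pi$ with a judicious choice of escape function. Recall that the recurrence rate $\underline{R}^{\Psi,\Pi}$ and $\overline{R}^{\Psi,\Pi}$ defined in the introduction correspond exactly to the rates $\underline{R}^{\Psi,f}_\rho$, $\overline{R}^{\Psi,f}_\rho$ with $f=\Pi$ once we show the escape time from $\Pi^{-1}B(p,r)$ is governed by the function $\rho_r(p)=r$. So first I would set $\rho_r(p):=r$ (or, to be safe with the definition of escape function, $\rho_r(p):=r|\log r|$) and check it is an escape function in the sense of Definition~\ref{artoosftheseacceptable}: since $\rho$ does not depend on the point $z$ at all and is just a power of $r$ times a slowly varying term, the required inequality $r^\eps\rho_{\xi_1 r}(z_2)\le\rho_r(z_1)\le\rho_{\xi_2 r}(z_2)r^{-\eps}$ is immediate for $r$ small. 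Then Theorem~\ref{artoosftheseth1} with $f=\Pi$ gives, for $\nu$-almost every $(p,v)$,
\[
\underline{R}^{\Psi,\Pi}_\rho(p,v)\le\liminf_{r\to0}\left(\frac{\log\Pi_*\nu(B(p,r))}{\log r}-\frac{\log\rho_r(p)}{\log r}\right)=\underline{d}_{\Pi_*\nu}(p)-1,
\]
and similarly with $\limsup$ for the upper rate, because $\frac{\log(r|\log r|)}{\log r}\to1$.

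The remaining point is to compare the return time $\tau^{\Psi,\Pi}_r$ from the introduction with $\tau^{\Psi,\Pi}_{r,\rho}$: I must show $\tau^{\Psi,\Pi}_r(p,v)\le\tau^{\Psi,\Pi}_{r,\rho}(p,v)$ for $r$ small, so that $\underline{R}^{\Psi,\Pi}(p,v)\le\underline{R}^{\Psi,\Pi}_\rho(p,v)$ and likewise for the $\limsup$. This is exactly the analogue of the last step in the proof of Corollary~\ref{artoosfthesecorid}, and here it is even cleaner: by \eqref{artoosftheseeqgeooo}, for $(p,v)\in T^1M$ and $t>0$ small one has $d(\Pi(p,v),\Pi(\Psi_t(p,v)))=t$, hence the first escape time of $\Pi(\Psi_t(p,v))$ from $B(p,r)$ is precisely $r$ for $r$ small. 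Thus the threshold $t>r$ in the definition of $\tau^{\Psi,\Pi}_r$ coincides with (or is dominated by) the threshold $t>\rho_r(p)$ used in $\tau^{\Psi,\Pi}_{r,\rho}$, which yields the desired inequality between return times. Combining the two displays gives the conclusion.

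I do not expect a serious obstacle here; the only care needed is bookkeeping. One should make sure the choice of $\rho$ genuinely satisfies Definition~\ref{artoosftheseacceptable} (the safest choice being $\rho_r(p)=r|\log r|$ as in Corollary~\ref{artoosfthesecorid}, whose $\log$-asymptotics are still $1$, so the bound $\underline{d}_{\Pi_*\nu}(p)-1$ is unaffected), and one should verify that the null set of $(p,v)$ on which Theorem~\ref{artoosftheseth1} fails, together with the null set where \eqref{artoosftheseeqgeooo} might be problematic (there is none—it holds for every $(p,v)$), is still $\nu$-null. Since the geodesic flow is differentiable and \eqref{artoosftheseeqgeooo} is universal, no extra hypothesis on $M$ or $\nu$ beyond invariance is required, matching the statement. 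Hence the whole argument is a direct transcription of Corollary~\ref{artoosfthesecorid} with the identity observation replaced by $\Pi$ and with the flow-box estimate replaced by the exact identity \eqref{artoosftheseeqgeooo}.
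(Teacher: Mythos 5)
Your proposal is essentially identical to the paper's proof: the paper also applies Theorem~\ref{artoosftheseth1} with $f=\Pi$ and $\rho_r(p)=r$, observing via \eqref{artoosftheseeqgeooo} that the escape time from $\Pi^{-1}B(p,r)$ equals $r$ exactly, so that $\tau_r^{\Psi,\Pi}=\tau_{r,\rho}^{\Psi,\Pi}$ literally. Your extra caution with $\rho_r(p)=r|\log r|$ is harmless but unnecessary — $\rho_r(z)=r$ already satisfies Definition~\ref{artoosftheseacceptable} since $\xi_1 r^\eps\le 1\le\xi_2 r^{-\eps}$ for $0<\xi_1<1<\xi_2$ and $r<1$.
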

\begin{remark} This result is still valid when the manifold $M$ is not compact and when $\nu$ is a probability measure.
\end{remark}
\begin{proof}[Proof of Theorem \ref{artoosfthesethgeogen}] The result comes from Theorem \ref{artoosftheseth1} using the escape function $\rho_r(p)=r$. In fact, it is important to notice that by \eqref{artoosftheseeqgeooo} for $r$ small enough
\[\inf\{t>0:\Pi(\Psi_t(p,v))\notin B(p,r)\}=r.\]
\end{proof}
\begin{definition}
A point $(p,v)\in T^1M$ is multiple if it exists $t>0$ such that $\Pi(\Psi_t(p,v))=p$.
\end{definition}
\begin{theorem}\label{artoosfthesethgeoneg}If $M$ has a strictly negative curvature and if $\nu$ is an equilibrium state of an H\"older potential then
\[\underline{R}^{\Psi,\Pi}(p,v)=\underline{d}_{\Pi_*\nu}(p)-1\qquad and \qquad
\overline{R}^{\Psi,\Pi}(p,v)=\overline{d}_{\Pi_*\nu}(p)-1\]
for $\nu$-almost every $(p,v)\in T^1M$ not multiple such that $\underline{d}_{\Pi_*\nu}(p)>1$.
\end{theorem}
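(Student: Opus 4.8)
The upper bounds $\underline R^{\Psi,\Pi}(p,v)\le\underline d_{\Pi_*\nu}(p)-1$ and $\overline R^{\Psi,\Pi}(p,v)\le\overline d_{\Pi_*\nu}(p)-1$ are exactly Theorem~\ref{artoosfthesethgeogen}, so the plan is to produce the matching lower bounds for $\nu$-almost every non-multiple $(p,v)$ with $\underline d_{\Pi_*\nu}(p)>1$. The main tool is Theorem~\ref{artoosfthesethrd}, whose hypotheses are available here: since $M$ has strictly negative curvature $\Psi$ is an Anosov flow, hence metrically isomorphic---via a conjugacy $g$ that, for a suitable metric on the base, is Lipschitz---to a suspension flow over a subshift of finite type $T$; since $\nu$ is an equilibrium state of a H\"older potential the induced base measure $\mu$ is one too, so $T$ has a super-polynomial decay of correlations; and $\Pi$ is $1$-Lipschitz. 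Theorem~\ref{artoosfthesethrd} thus gives $\underline R^{\Psi,\Pi}_\star(p,v)\ge\underline d^{\Pi,g}_\mu(p,v)$ and $\overline R^{\Psi,\Pi}_\star(p,v)\ge\overline d^{\Pi,g}_\mu(p,v)$ at every point with $\underline d^{\Pi,g}_\mu(p,v)>0$. The whole argument then rests on two reductions: (i) $\underline d^{\Pi,g}_\mu(p,v)\ge\underline d_{\Pi_*\nu}(p)-1$ (and likewise for $\overline d$), and (ii) $\underline R^{\Psi,\Pi}(p,v)=\underline R^{\Psi,\Pi}_\star(p,v)$ (and likewise for $\overline R$) at non-multiple points.

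For (i), fix $p$ and note that by \eqref{artoosftheseeqgeooo} and the uniform positivity of the convexity/injectivity radius of the compact manifold $M$, for $r$ small every unit vector based in $B(p,r)$ has projected geodesic meeting $B(p,r)$ in a single arc of length $\le 2r$; hence $\Pi^{-1}B(p,r)$ is a genuine flow box of flow-time extent $\le 2r$, and applying $g^{-1}$ (which conjugates the flows and preserves flow time) produces a flow box $W_r\subset Y$ of the same height. Because a suspension fibre $\{u\}\times[0,\phi(u))$ is a single orbit-arc, $W_r$ meets it in an interval of length $\le 2r$; integrating over $X$ gives $\Pi_*\nu(B(p,r))=\nu_\mu(W_r)\le 2r\,\mu(\pi(W_r))$, i.e. the easy half $\underline d^{\Pi,g}_\mu\le\underline d_{\Pi_*\nu}-1$. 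For the half we actually need, $r\,\mu(\pi(W_r))\le C\,\Pi_*\nu(B(p,r))$ with $C$ independent of $r$, one must show that the base points of $\pi(W_r)$ whose orbit-arc stays in $\Pi^{-1}B(p,r)$ for time $<r/C$ form, in $\mu$-measure, a bounded fraction of $\pi(W_r)$: these are the points within flow time $r/C$ of the entry/exit surface of the box (a set of measure $\lesssim (r/C)\,\mu(\pi(W_r))$, negligible for small $r$), plus the points on ``grazing'' geodesics nearly tangent to $\partial B(p,r)$---and for the latter one invokes the local product structure of the Gibbs measure $\nu$ (transverse measure times Lebesgue along the flow) together with the elementary fact that the chord length and the transverse flux factor of a nearly tangent geodesic both vanish proportionally, so the grazing vectors carry only an $O(1/C^2)$ share of the transverse mass of the entry surface. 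Once (i) holds, $\underline d_{\Pi_*\nu}(p)>1$ forces $\underline d^{\Pi,g}_\mu(p,v)>0$, so Theorem~\ref{artoosfthesethrd} applies.

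For (ii), let $(p,v)$ be non-multiple. By \eqref{artoosftheseeqgeooo} (uniform on compact $M$) there is a fixed $t_0>0$ with $\tau^{\Psi,\Pi}_r(p,v)\ge t_0$ for all small $r$; and if $\tau^{\Psi,\Pi}_{r_n}(p,v)$ stayed bounded along some $r_n\to0$, a compactness argument would produce $t_*\ge t_0$ with $\Pi(\Psi_{t_*}(p,v))=p$, contradicting non-multiplicity---so $\tau^{\Psi,\Pi}_r(p,v)\to\infty$. Hence for each fixed integer $p'$ and all sufficiently small $r$ the projected orbit does not return to $B(p,r)$ during the window $(r,p']$, so $\tau^{\Psi,\Pi}_r(p,v)=\tau^{\Psi,\Pi,\star}_{r,p'}(p,v)$; letting $r\to0$ and then $p'\to\infty$ gives (ii). Combining: $\underline d_{\Pi_*\nu}(p)-1\le\underline d^{\Pi,g}_\mu(p,v)\le\underline R^{\Psi,\Pi}_\star(p,v)=\underline R^{\Psi,\Pi}(p,v)\le\underline d_{\Pi_*\nu}(p)-1$, so equality holds throughout, and the same chain with $\overline d,\overline R$ finishes the proof.

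The hard part is the bound $r\,\mu(\pi(W_r))\le C\,\Pi_*\nu(B(p,r))$ in (i): the flow-box picture is soft, but estimating the $\nu$-mass of grazing geodesics uniformly in $r$ genuinely uses the hyperbolic (Gibbs / local-product) structure of $\nu$, and it requires some care since $\nu$ is an arbitrary equilibrium state rather than the Liouville measure. Everything else is routine: the hypotheses of Theorem~\ref{artoosfthesethrd} are the same ones already verified for Theorems~\ref{artoosftheseintroideg} and \ref{artoosfthesethgeogenintroid}, and reduction (ii) is an immediate consequence of \eqref{artoosftheseeqgeooo} together with non-multiplicity.
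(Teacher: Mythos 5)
Your overall strategy coincides with the paper's: reduce to the suspension flow via Theorem~\ref{artoosfthesethrd}, identify the projection dimension $\underline d^{\Pi,g}_\mu$ with $\underline d_{\Pi_*\nu}-1$, and show that non-multiplicity of $(p,v)$ makes the instantaneous and non-instantaneous recurrence rates agree. Reduction (ii) is essentially verbatim what the paper does. For reduction (i), however, you have misjudged where the difficulty lies. You keep the radius $r$ fixed on both sides and therefore have to worry about geodesics grazing $\partial B(p,r)$, for which you propose to invoke the local product structure of the Gibbs measure and a transverse-flux estimate; this is not needed and is not what the paper does. The paper's Lemma~\ref{artoosftheselempi} simply doubles the radius: if $g(x,t)\in\Pi^{-1}B(p,r)$, then by the exact relation $d\bigl(\Pi(q,u),\Pi(\Psi_s(q,u))\bigr)=s$ (valid uniformly for $s\leq\beta$ on the compact $T^1M$) one has $g(x,t+s)\in\Pi^{-1}B(p,2r)$ for all $0\leq s\leq r$, so each fibre over $\pi(\Pi\circ g)^{-1}B(p,r)$ contributes at least Lebesgue measure $r$ to $g^{-1}\Pi^{-1}B(p,2r)$, yielding
\[
\nu\bigl(\Pi^{-1}B(p,2r)\bigr)\ \geq\ r\,\mu\bigl(\pi(\Pi\circ g)^{-1}B(p,r)\bigr).
\]
Since passing from $r$ to $2r$ does not affect the $\liminf$/$\limsup$ of $\log(\cdot)/\log r$, this gives $\underline d^{\Pi,g}_\mu(p,v)\geq\underline d_{\Pi_*\nu}(p)-1$ (and likewise for $\overline d$) with no reference to equilibrium-state structure whatsoever; the grazing geodesics you were concerned about are absorbed harmlessly by the enlarged ball. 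In short, your approach is the same as the paper's, but your treatment of (i) is an unnecessary detour that you yourself flagged as delicate; the paper's doubling argument is both simpler and works for an arbitrary invariant probability measure, which is why Lemma~\ref{artoosftheselempi} is stated without any Gibbs hypothesis.
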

\begin{remark}
 For every point $p\in M$ it exists enumerably many $v\in T^1_pM$ such that $(p,v)$ is multiple. We can wonder if when the measure $\nu$ is an equilibrium state of an H\"older potential, the set of multiple points is a zero measure set.
\end{remark}
Using a result of Ledrappier-Lindenstrauss \cite{MR1951544} we obtain the following corollary:
\begin{corollary}If $M$ is a compact Riemannian surface with striclty negative curvature, if $\nu$ is an equilibrium state of an H\"older potential such that $\dim_H\nu>2$ then
\[R^{\Psi,\Pi}(p,v)=1\]
for $\nu$-almost every $(p,v)\in T^1M$ not multiple.
\end{corollary}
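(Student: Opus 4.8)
The plan is to combine Theorem~\ref{artoosfthesethgeoneg} with the Ledrappier--Lindenstrauss theorem on projections of geodesic-flow invariant measures \cite{MR1951544}. By Theorem~\ref{artoosfthesethgeoneg}, under the hypotheses on $M$ and $\nu$ we have, for $\nu$-almost every non-multiple $(p,v)$ with $\underline{d}_{\Pi_*\nu}(p)>1$,
\[\underline{R}^{\Psi,\Pi}(p,v)=\underline{d}_{\Pi_*\nu}(p)-1\qquad\textrm{and}\qquad\overline{R}^{\Psi,\Pi}(p,v)=\overline{d}_{\Pi_*\nu}(p)-1.\]
So the corollary reduces to showing that $\underline{d}_{\Pi_*\nu}(p)=\overline{d}_{\Pi_*\nu}(p)=2$ for $\Pi_*\nu$-almost every $p\in M$: this gives $R^{\Psi,\Pi}(p,v)=2-1=1$, and at the same time the value $2>1$ supplies the missing hypothesis of Theorem~\ref{artoosfthesethgeoneg}.

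To obtain the local dimension of $\Pi_*\nu$, first I would apply the Ledrappier--Lindenstrauss theorem: $M$ is a compact surface, $\nu$ is an ergodic invariant measure of the geodesic flow on $T^1M$ (an equilibrium state of a H\"older potential for an Anosov flow is ergodic), and $\dim_H\nu>2$; hence $\Pi_*\nu$ is absolutely continuous with respect to the Riemannian volume $m$ on $M$. Here it is useful to recall that, $M$ being a surface, the geodesic flow is an Anosov flow with one-dimensional strong stable and unstable bundles, so it is conformal and the equilibrium state $\nu$ is exact dimensional; thus $\dim_H\nu$ equals the $\nu$-almost sure common value of $\underline{d}_\nu$ and $\overline{d}_\nu$, which is the quantity controlled by the Ledrappier--Lindenstrauss hypothesis.

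Then I would run a standard density argument on the surface. Write $\Pi_*\nu=\varphi\,dm$ with $\varphi\in L^1(m)$. By the Lebesgue differentiation theorem, for $\Pi_*\nu$-almost every $p$ one has $0<\varphi(p)<\infty$ and $\Pi_*\nu(B(p,r))/m(B(p,r))\to\varphi(p)$ as $r\to0$. Since $m(B(p,r))$ is asymptotically a positive constant times $r^{2}$ as $r\to0$ on a Riemannian surface, it follows that $\log\Pi_*\nu(B(p,r))/\log r\to2$, i.e. $\underline{d}_{\Pi_*\nu}(p)=\overline{d}_{\Pi_*\nu}(p)=2$ for $\Pi_*\nu$-almost every $p$, hence for $\nu$-almost every $(p,v)$, the exceptional set being the $\Pi$-preimage of a $\Pi_*\nu$-null set. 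Plugging this into Theorem~\ref{artoosfthesethgeoneg} yields $R^{\Psi,\Pi}(p,v)=1$ for $\nu$-almost every non-multiple $(p,v)$.

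The only delicate point is the precise form of the Ledrappier--Lindenstrauss statement: one must check that the dimension hypothesis there is indeed implied by $\dim_H\nu>2$ rather than by some larger dimensional quantity, which is exactly where exact-dimensionality of equilibrium states for the conformal surface geodesic flow is used to identify $\dim_H\nu$ with the relevant pointwise dimension. The density computation and the appeal to Theorem~\ref{artoosfthesethgeoneg} are then routine.
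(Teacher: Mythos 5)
Your proof follows the paper's own route exactly: the paper deduces the corollary by combining Theorem~\ref{artoosfthesethgeoneg} with Theorem~1.1 of Ledrappier--Lindenstrauss~\cite{MR1951544} (absolute continuity of $\Pi_*\nu$ when $\dim_H\nu>2$ on a compact surface), and you have supplied the same two ingredients together with the routine density computation showing that absolute continuity forces $\underline{d}_{\Pi_*\nu}(p)=\overline{d}_{\Pi_*\nu}(p)=2$ for $\Pi_*\nu$-a.e.\ $p$, which simultaneously verifies the hypothesis $\underline{d}_{\Pi_*\nu}(p)>1$ and yields $R^{\Psi,\Pi}(p,v)=2-1=1$. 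Your side remark about matching $\dim_H\nu$ to the pointwise dimension of $\nu$ used in~\cite{MR1951544} (via exact-dimensionality of equilibrium states for the conformal geodesic flow, or simply via $\dim_H\nu=\mathrm{ess\,inf}\,\underline{d}_\nu$) is a reasonable precaution and does not change the argument.
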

\begin{proof} We just need to use Theorem~\ref{artoosfthesethgeoneg} and Theorem~1.1. of \cite{MR1951544}.
\end{proof}
\begin{proof}[Proof of Theorem \ref{artoosfthesethgeoneg}] We already noticed is the proof of Theorem \ref{artoosfthesethgeogenintroid} that since $M$ has a strictly negative curvature, $\Psi$ is metrically isomorphic to an hyperbolic symbolic flow and that the metric on $X$ can be choosen such that the isomorphism (denoted $g$) linking these two flows is Lipschitz. Moreover, since $\nu$ is an equilibrium state of an H\"older potential, $\mu$ is also an equilibrium state for the dynamical system $(X,\mathcal{A},T)$ and the decay of correlations is super-polynomial. The end of the proof used the following lemma (which will be proved later):
\begin{lem}\label{artoosftheselempi}For every $(p,v)\in T^1M$
\[\underline{d}^{\Pi,g}_\mu(p,v)=\underline{d}_{\Pi_*\nu}(p)-1\]
and
\[\overline{d}^{\Pi,g}_\mu(p,v)=\overline{d}_{\Pi_*\nu}(p)-1.\]
\end{lem}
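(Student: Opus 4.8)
The plan is to transport the problem to the base $X$ through the isomorphism $g$ and Fubini's theorem, and then to exploit the fact that the geodesic flow, once projected to $M$, moves at unit speed (equation~\eqref{artoosftheseeqgeooo}); this lets me compare $\mu\big(\pi(\Pi\circ g)^{-1}B(p,r)\big)$ with $\Pi_*\nu(B(p,r))$ up to a factor essentially equal to $r$. First I would record the identity coming from the defining properties of $g$ together with the normalisation $\nu_\mu=\mu\otimes Leb$ (recall $\int_X\phi\,d\mu=1$): for every Borel $A\subseteq M$,
\[\Pi_*\nu(A)=\nu(\Pi^{-1}A)=\nu_\mu\big(g^{-1}\Pi^{-1}A\big)=(\mu\otimes Leb)\big((\Pi\circ g)^{-1}A\big).\]
Writing $C_r:=(\Pi\circ g)^{-1}B(p,r)\subseteq Y$ and, for $u\in X$, $C_r(u):=\{s\in[0,\phi(u)]:(u,s)\in C_r\}$ for the vertical fiber, Fubini gives
\[\Pi_*\nu(B(p,r))=\int_X Leb\big(C_r(u)\big)\,d\mu(u)=\int_{\pi(C_r)} Leb\big(C_r(u)\big)\,d\mu(u).\]
Since $g(u,s)=\Psi_s(g(u,0))$ for $\mu$-almost every $u$ and all admissible $s$, the fiber $C_r(u)$ is exactly the set of times $s\in[0,\phi(u)]$ at which the geodesic issued from $g(u,0)$ lies in $B(p,r)$. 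Hence the whole lemma reduces to two pointwise estimates on $Leb(C_r(u))$, uniform in $u$; here $\phi$ is the roof function of the symbolic model, which is Hölder on a compact base, hence bounded with $\inf_X\phi>0$.

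\emph{Upper estimate.} I claim there is a constant $K=K(M,\phi)$ with $Leb(C_r(u))\le Kr$ for all $u$ and all small $r$. This is the geometric heart of the argument, and the step I expect to be the main obstacle. Since $M$ is compact it has a positive convexity radius, so for $r$ below it the ball $B(p,r)$ is geodesically convex; a unit-speed geodesic then meets $B(p,r)$ in finitely many arcs, each of length at most $2r$ (a geodesic arc contained in a convex ball is minimising, so its length equals the distance between its endpoints, which is at most the diameter of the ball), and two consecutive such arcs are separated by a time bounded below by the convexity radius (otherwise convexity would force the intervening segment to stay in the ball). Since the curve $s\mapsto g(u,s)$, $0\le s\le\phi(u)$, has length $\phi(u)\le\sup_X\phi$, the number of such arcs is bounded in terms of $M$ and $\sup\phi$ only, which gives the claim. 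Feeding this into the Fubini identity yields $\Pi_*\nu(B(p,r))\le Kr\,\mu(\pi(C_r))$, i.e.\ $\mu(\pi(C_r))\ge (Kr)^{-1}\Pi_*\nu(B(p,r))$; dividing by $\log r<0$ and letting $r\to0$ (the constant $K$ disappearing) gives $\underline d^{\Pi,g}_\mu(p,v)\le\underline d_{\Pi_*\nu}(p)-1$ and $\overline d^{\Pi,g}_\mu(p,v)\le\overline d_{\Pi_*\nu}(p)-1$.

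\emph{Lower estimate.} If $u\in\pi(C_r)$ there is $s\in[0,\phi(u)]$ with $g(u,s)\in\Pi^{-1}B(p,r)$; by the triangle inequality and \eqref{artoosftheseeqgeooo} (unit speed of the projected geodesic), $g(u,s')\in\Pi^{-1}B(p,2r)$ for every $s'$ with $|s'-s|\le r$, and since $\phi(u)\ge\inf_X\phi>2r$ for $r$ small, an interval of such times of length $r$ can be chosen inside $[0,\phi(u)]$. Thus $Leb(C_{2r}(u))\ge r$ for every $u\in\pi(C_r)$, and applying the Fubini identity at scale $2r$ gives $\Pi_*\nu(B(p,2r))\ge r\,\mu(\pi(C_r))$, i.e.\ $\mu(\pi(C_r))\le r^{-1}\Pi_*\nu(B(p,2r))$. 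Dividing by $\log r$ and letting $r\to0$ (the rescaling $2r\leadsto r$ and the additive constants being absorbed in the limit) yields the reverse inequalities $\underline d^{\Pi,g}_\mu(p,v)\ge\underline d_{\Pi_*\nu}(p)-1$ and $\overline d^{\Pi,g}_\mu(p,v)\ge\overline d_{\Pi_*\nu}(p)-1$, which completes the proof, the degenerate cases where these quantities are infinite being immediate.

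In short, apart from the uniform bound $Leb(C_r(u))=O(r)$ — where the special structure of the projected geodesic flow is essential and where one has to rule out a thin fiber accumulating a large amount of Lebesgue measure through many returns of a bounded-length geodesic to a tiny ball — everything is bookkeeping with $g$, Fubini, and the elementary behaviour of $\liminf$ and $\limsup$ of $\log(\cdot)/\log r$.
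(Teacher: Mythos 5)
Your proposal is correct and follows the same strategy as the paper's proof: use $g$ and Fubini to reduce the comparison of $\mu(\pi(\Pi\circ g)^{-1}B(p,r))$ with $\Pi_*\nu(B(p,r))$ to a fiber-wise estimate, then show $Leb(C_r(u))=O(r)$ uniformly and $Leb(C_{2r}(u))\geq r$ for $u\in\pi(C_r)$, and take logarithms. The only difference is cosmetic: you derive the $O(r)$ upper bound from geodesic convexity and a bound on the number of visits, while the paper gets it from the unit-speed fact \eqref{artoosftheseeqgeooo} together with a triangle-inequality escape argument (once $\Pi(g(x,t))\in B(p,r)$, then $\Pi(g(x,t+s))\notin B(p,r)$ for all $s\in[2r,\beta]$), which are the same geometric observation phrased differently.
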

Since $M$ is a compact manifold and since the flow is the geodesic flow on $T^1M$, it exists $\beta>0$ such that for every $t\leq\beta$ and for every $(x,u)\in T^1M$, $d(\Pi(x,u),\Pi(\Psi_t(x,u)))=t$. Then, for every $r$ small enough, $\tau_{r}^{\Psi,\Pi}(p,v)\geq\beta$ and so for $\nu$-almost every $(p,v)\in T^1M$ which is not multiple, $\tau_{r}^{\Psi,\Pi}(p,v)\rightarrow+\infty$ when $r\rightarrow0$. Let $k\in\N^*$. For every point $(p,v)$ not multiple it exists $r(k,p,v)>0$ such that for every $0<r<r(k,p,v)$, $\tau_{r}^{\Psi,\Pi}(p,v)>k$ which implies that $\tau_{r}^{\Psi,\Pi}(p,v)=\tau_{r,k}^{\Psi,\Pi,\star}(p,v)$. We obtain
\begin{equation}\label{artoosftheseregalrstargeo}
\underline{R}^{\Psi,\Pi}(p,v)=\underline{R}^{\Psi,\Pi}_\star(p,v)\qquad \textrm{and} \qquad \overline{R}^{\Psi,\Pi}(p,v)=\overline{R}^{\Psi,\Pi}_\star(p,v).
\end{equation}
The theorem is proved using \eqref{artoosftheseregalrstargeo}, Theorem~\ref{artoosftheseth1}, Theorem~\ref{artoosftheseth2} and Lemme~\ref{artoosftheselempi}.
\end{proof}
We now prove the essential lemma used in the previous proof.
\begin{proof}[Proof of Lemma \ref{artoosftheselempi}]
Let $(p,v)\in T^1M$. Let $r>0$.
\begin{eqnarray*}
\nu\left(\Pi^{-1}B(p,r)\right)&=&\mu\otimes Leb(g^{-1}\left(\Pi^{-1}B(p,r)\right))\\
&=&\int_X\int_{0}^{\phi(x)}\bold{1}_{\Pi^{-1}B(p,r)}(g(x,t))dt\,d\mu(x)
\end{eqnarray*}
Since $\Psi$ is the geodesic flow on $T^1M$ and since $M$ is compact, it exists $\beta>0$ such that for every $t\leq\beta$ and for every $(p,u)\in T^1M$, $d(\Pi(p,u),\Pi(\Psi_t(p,u)))=t$.
Let $x\in X$ such that it exists $t\in(0,\phi(x))$ such that $g(x,t)\in \Pi^{-1}B(p,r)$, then if $r<\frac{\beta}{2}$, for every $s\in [2r,\beta]$ we have $g(x,t+s)\notin \Pi^{-1}B(p,r)$. Indeed, 
\[g(x,t+s)=g(\Phi_{s}(x,t))=\Psi_{s}(g(x,t))\]
and so
\begin{eqnarray*}
d(p, \Pi(g(x,t+s)))&\geq& d(\Pi(g(x,t)),\Pi(g(x,t+s)))-d(p,\Pi(g(x,t)))\\
&\geq& d(\Pi(g(x,t)),\Pi(\Psi_{s}g(x,t)))-r\\
&\geq& s-r\geq 2r-r=r.
\end{eqnarray*}
Then,
\begin{equation}\label{artoosftheseinegboulepi}
\nu\left(\Pi^{-1}B(p,r)\right)\leq \frac{\sup_x\phi(x)}{\beta}2r\mu\left(\pi (\Pi\circ g)^{-1}B(p,r)\right).\end{equation}
Moreover, we notice
\begin{eqnarray*}
\nu\left(\Pi^{-1}B(p,2r)\right)&=&\int_X\int_{0}^{\phi(x)}\bold{1}_{\Pi^{-1}B(p,2r)}(g(x,t))dt\,d\mu(x)\\
&\geq& \int_{\pi (\Pi\circ g)^{-1}B(p,r)}\int_{0}^{\phi(x)}\bold{1}_{\Pi^{-1}B(p,2r)}(g(x,t))dt\,d\mu(x).
\end{eqnarray*}
Let $x\in \pi (\Pi\circ g)^{-1}B(p,r)$, it exists $t\in (0,\phi(x))$ such that $(x,t)\in (\Pi\circ g)^{-1}B(p,r)$. Then, if $r$ is small enough 
\begin{eqnarray*}
d(\Pi(g(x,t+r)),p)&\leq&d(\Pi(g(x,t+r)),\Pi(g(x,t)))+d(\Pi(g(x,t)),p)\\
&\leq&d(\Pi(\Psi_r(g(x,t))),\Pi(g(x,t)))+r\\
&\leq& r+r=2r
\end{eqnarray*}
which gives
\begin{equation}\label{artoosftheseinegpiboule}
\nu\left(\Pi^{-1}B(p,2r)\right)\geq r\mu(\pi (\Pi\circ g)^{-1}B(p,r)).
\end{equation}
Using \eqref{artoosftheseinegboulepi} and \eqref{artoosftheseinegpiboule}, we obtain
\begin{eqnarray*}\underline{d}_{\Pi_*\nu}(p)&=&\underset{r\rightarrow0}{\underline\lim}\frac{\log\nu\left(\Pi^{-1}B(p,r)\right)}{\log r}\\&=&\underset{r\rightarrow0}{\underline\lim}\frac{\log\mu\left(\pi (\Pi\circ g)^{-1}B(p,r)\right)}{\log r}+1\\&=&\underline{d}^{\Pi,g}_\mu(p,v)+1\end{eqnarray*}
and
\begin{eqnarray*}\overline{d}_{\Pi_*\nu}(p)&=&\underset{r\rightarrow0}{\overline\lim}\frac{\log\nu\left(\Pi^{-1}B(p,r)\right)}{\log r}\\&=&\underset{r\rightarrow0}{\overline\lim}\frac{\log\mu\left(\pi (\Pi\circ g)^{-1}B(p,r)\right)}{\log r}+1\\&=&\overline{d}^{\Pi,g}_\mu(p,v)+1.\end{eqnarray*}
\end{proof}
\section{Observations of suspensions flows}\label{artoosfthesesecexemple}
We now give some examples of observations of a suspension flow where we can compute the different dimensions and apply our theorems. In these examples, we will notice how important is the choice of the escape function.

Let $(X,\mathcal{A},\mu,d, T)$ a metric measure preserving system with a super-polynomial decay of correlations. We consider the suspension flow $\Phi$ over $T$ with height function $1$.
\subsection{Projection on the base}
Let $f$ the observation of the projection on the base $X$, i.e.:
 \[\begin{array}{cccl}
f:&Y&\longrightarrow&X\\
 &(x,s)&\longmapsto&x.
\end{array}\]
Then, for every $(x,s)\in Y$ we have
\[\underline{d}_{f_\star(\mu\otimes Leb)}(f(x,s))=\underline{d}_\mu(x)\qquad\textrm{and}\qquad\overline{d}_{f_\star(\mu\otimes Leb)}(f(x,s))=\overline{d}_\mu(x)\]
and
\[\underline{d}_\mu^{f,id}(x,s)=\underline{d}_\mu(x)\qquad\textrm{and}\qquad\overline{d}_\mu^{f,id}(x,s)=\underline{d}_\mu(x).\]
Choosing the escape function $\rho_r=|\log r|$, and comparing $\tau_{r,\rho}^{\Phi,f}$ to $\tau_{r,p}^{\Phi,f,\star}$, Theorem~\ref{artoosftheseth1} and Theorem~\ref{artoosfthesethrd} give us that for $\mu\otimes Leb$-almost every $(x,s)\in Y$
\[\underline{R}_\rho^{\Phi,f}(x,s)=\underline{R}_\star^{\Phi,f}(x,s)=\underline{d}_\mu(x) \]
and
\[\overline{R}_\rho^{\Phi,f}(x,s)=\overline{R}_\star^{\Phi,f}(x,s)=\overline{d}_\mu(x)\]
for $\mu\otimes Leb$-almost every $(x,s)\in Y$ such that $\underline{d}_\mu(x)>0$.

We emphasize that with this observation we are only interested in the return time of a point $x\in X$ under the action of $T$ and that we obtain the results of \cite{MR2191396} for discrete dynamical systems.

We must notice that, in this case, to obtain a non-trivial result the escape function must be chosen cautiously. Indeed, if the escape function is $\rho_r(x)=r$ for every $x$ for example, we obtain immediately that $\tau_{r,\rho}^{\Phi,f}(x,s)=r$ for every $(x,s)\in Y$ and for $r$ small enough. This come from the fact that $f^{-1}B(f(x,s),r)=B(x,r)\times[0,1]$ and so the first escape time of $\Phi_t(x,s)$ of this set does not depend on $r$ but only on $(x,s)$. In this case, we cannot obtain an equality between recurrence rates and dimensions.

\subsection{Projection on the time}
Let now $f$ the projection on the time, i.e.:
 \[\begin{array}{cccl}
f:&Y&\longrightarrow&[0,1)\\
 &(x,s)&\longmapsto&s.
\end{array}\]
Then, for every $(x,s)\in Y$ we have
\[\underline{d}_{f_\star(\mu\otimes Leb)}(f(x,s))=1\qquad\textrm{and}\qquad\overline{d}_{f_\star(\mu\otimes Leb)}(f(x,s))=1.\]
Choosing the escape function $\rho_r(s)=2r$ (which corresponds for example to the essential supremum of of the escape times of $f^{-1}B(f(x,s),r)$) we obtain that for $\mu\otimes Leb$-almost every $(x,s)\in Y$
\[\underline{R}_\rho^{\Phi,f}(x,s)=\underline{R}_\star^{\Phi,f}(x,s)=\overline{R}_\rho^{\Phi,f}(x,s)=\overline{R}_\star^{\Phi,f}(x,s)=0 .\]
We can also notice that in this case we can compute the projection dimension for the observation $f$:
\[\underline{d}_\mu^{f,id}(x,s)=0\qquad\textrm{and}\qquad\overline{d}_\mu^{f,id}(x,s)=0.\]
This example and the previous example show us that for any flow and any observation we have to choose an escape function depending on their parameters. Naturally, we can wonder if we can find a unique escape function giving optimal results for every flow. We notice that for the two previous examples, we obtain optimal results with the following escape function:
\begin{equation}\label{artoosfthesefoncfuitebon}
\rho_r(f(y))=\frac{|\log r|}{f_*\nu(B(f(y),r))}\int_{f^{-1}B(f(y),r)}\inf\{t>0:f(\Phi_t(z))\notin B(f(y),r)\}d\nu
\end{equation}
where $\nu=\mu\otimes Leb$.
\subsection{Mixed projection}
Let us suppose that $X$ is a product space, i.e. it exists two spaces $X_1$ and $X_2$ such that $X=X_1\times X_2$.
Let $f$ the following observation:
 \[\begin{array}{cccl}
f:&Y&\longrightarrow&X_1\times[0,1)\\
 &(x_1,x_2,s)&\longmapsto&(x_1,s).
\end{array}\]
Then, for every $(x_1,x_2,s)\in Y$ we have
\[\underline{d}_\mu^{f,id}(x_1,x_2,s)=\underset{r\rightarrow0}{\underline{\lim}}\frac{\log\mu(B(x_1,r)\times X_2)}{\log r}\]
and
\[\overline{d}_\mu^{f,id}(x_1,x_2,s)=\underset{r\rightarrow0}{\overline{\lim}}\frac{\log\mu(B(x_1,r)\times X_2)}{\log r}.\]
Moreover, we can easily prove that
\[\underline{d}_{f_\star\mu\otimes Leb}(f(x_1,x_2,s))=\underline{d}_\mu^{f,id}(x_1,x_2,s)+1\]
and
\[\overline{d}_{f_\star\mu\otimes Leb}(f(x_1,x_2,s))=\overline{d}_\mu^{f,id}(x_1,x_2,s)+1.\]
Then, with the escape function $\rho_r(x_1,s)=r|\log r|$ (which is in fact the escape function defined by \eqref{artoosfthesefoncfuitebon}) and using Proposition~\ref{artoosftheseremreg}, Theorem~\ref{artoosftheseth1} and Theorem~\ref{artoosfthesethrd} we can prove that for $\mu\otimes Leb$-almost every $(x_1,x_2,s)\in Y$ such that $\underline{R}_\rho^{\Phi,f}(x_1,x_2,s)>0$ we have
\[\underline{R}_\rho^{\Phi,f}(x_1,x_2,s)=\underline{R}_\star^{\Phi,f}(x_1,x_2,s)=\underline{d}_\mu^{f,id}(x_1,x_2,s)\]
and
\[\overline{R}_\rho^{\Phi,f}(x_1,x_2,s)=\overline{R}_\star^{\Phi,f}(x_1,x_2,s)=\overline{d}_\mu^{f,id}(x_1,x_2,s)\]
whenever $\underline{d}_\mu^{f,id}(x_1,x_2,s)>0$.


\section{Upper bound of the recurrence rates}\label{artoosfthesesecmajo}
To prove Theorem~\ref{artoosftheseth1}, we first need the definition of weakly diametrically regular measure:
\begin{definition}
A measure $\mu$ is weakly diametrically regular on the set $Z\subset X$ if for any $\eta>1$, for $\mu$-almost every $x\in Z$ and every $\eps>0$, there exists $\delta>0$ such that if $r<\delta$ then $\mu\left(B\left(x,\eta r\right)\right)\leq\mu\left(B\left(x,r\right)\right)r^{-\eps}$.
\end{definition} 
\begin{proof}[Proof of Theorem~\ref{artoosftheseth1}] Since any probability measure is weakly diametrically regular on $\R^d$ (for any $d\in\N^*$)~\cite{MR1833809}, the measure $f_*\nu$ is weakly diametrically regular on $\R^N$. We notice that in the definition of weakly diametrically regular measure and in Definition~\ref{artoosftheseacceptable}, the functions $\delta(f(\cdot),\eps,\eta)$ and $\zeta(f(.),\eps,\xi_1,\xi_2)$ can be made measurable for every $\eps$, $\eta$, $\xi_1$ and $\xi_2$. Let us fix $\eps>0$, $\eta=4$, $\xi_1=\frac{3}{4}$ and $\xi_2=\frac{5}{4}$. We choose $\delta>0$ small enough to have
 $$\nu(M_\delta)>\nu(M)-\eps=1-\eps$$
where $M_\delta:=\left\{x\in M:\,\delta(f(x),\eps,\eta)>\delta\textrm{ and }\zeta(f(x),\eps,\xi_1,\xi_2)>\delta\right\}$. For $\delta>r>0$ we define
\[A_\eps(r):=\left\{y\in M_\delta:\tau_{3r,\alpha}^{\Psi,f}(y)\nu\left(B(f(y),r)\right)\frac{1}{\rho_{5r}(f(y))}\geq r^{-4\eps}\right\}.\]
Let $C\subset M_\delta$ such that $(f(x))_{x\in C}$ maximal $r$-separated set for $f(M_\delta)$. We have
\[\nu(A_\eps(r))\leq \sum_{x\in C}\nu\left\{y\in f^{-1}B(f(x),r):\tau_{3r,\alpha}^{\Psi,f}(y)\nu\left( f^{-1}B(f(y),r)\right)\frac{1}{\rho_{5r}(f(y))}\geq r^{-4\eps}\right\}.\]
For $y\in f^{-1}B(f(x),2r)$ we define
\[\tau_{2r,\alpha}^{\Psi,f}(y,x)=\inf\left\{t>\rho_{4r}(f(x))r^{-\eps}\,:\,f(\Psi_t y)\in B(f(x),2r)\right\}.\]
If $y\in f^{-1}B(f(x),r)$, since $\rho_{4r}(f(x))r^{-\eps}\geq \rho_{3r}(f(y))$ and since $B(f(x),2r)\subset B(f(y),3r)$, we have
\[\tau_{2r,\alpha}^{\Psi,f}(y,x)\geq\tau_{3r,\alpha}^{\Psi,f}(y).\]
Moreover 
\[\nu\left(f^{-1}B(f(y),r)\right)\leq\nu\left(f^{-1}B(f(x),2r)\right)\]
and 
\[\rho_{4r}(f(x))r^{\eps}\leq \rho_{5r}(f(y))\]
which give us
\begin{eqnarray*}
& &\nu(A_\eps(r))\\
&\leq& \sum_{x\in C}\nu\left\{y\in f^{-1}B(f(x),r):\tau_{2r,\alpha}^{\Psi,f}(y,x)\nu\left(f^{-1}B(f(x),2r)\right)\frac{1}{\rho_{4r}(f(x))r^\eps}\geq r^{-4\eps}\right\}.
\end{eqnarray*}
Then, using Markov's inequality
\begin{equation}\label{artoosfthesemarkov}
\nu(A_\eps(r))\leq \sum_{x\in C}\frac{r^{3\eps}}{\rho_{4r}(f(x))}\nu\left( f^{-1}B(f(x),2r)\right)\int_{ f^{-1}B(f(x),2r)}\tau_{2r,\alpha}^{\Psi,f}(y,x)d\nu(y).
\end{equation}
We denote $D_r$ the set $f^{-1}B(f(x),2r)$. We define the application $T_r$ from $M$ to $M$ by
\begin{eqnarray*}
T_r\,\,:M&\longrightarrow& M\\
y&\longmapsto& \Psi_r(y)
\end{eqnarray*}
and the non-instantaneous return time for $y\in D_r$
\[\tau_{D_r}^{T_r}(y):=\inf\{n>\frac{\rho_{4r}(f(x))r^{-\eps}}{r}\,:\,T^n_ry\in D_r\}.\]
Since this return time is inferior to the $\frac{\rho_{4r}(f(x))r^{-\eps}}{r}$-th return time of $y$ in $D_r$, Ka\v c Lemma gives
\[\int_{D_r}\tau_{D_r}^{T_r}(y)d\nu(y)\leq \frac{\rho_{4r}(f(x))r^{-\eps}}{r}.\]
Moreover, for $y\in D_r$ we observe that
\[\tau_{2r,\alpha}^{\Psi,f}(y,x)\leq r\tau_{D_r}^{T_r}(y)\]
and so we obtain
\begin{equation}\label{artoosfthesekac}
\int_{f^{-1}B(f(x),2r)}\tau_{2r,\alpha}^{\Psi,f}(y,x)d\nu(y)\leq \rho_{4r}(f(x))r^{-\eps}.
\end{equation}
Using \eqref{artoosfthesemarkov} and \eqref{artoosfthesekac}, we have
\[\nu(A_\eps(r))\leq\sum_{x\in C}r^{2\eps}\nu\left(f^{-1}B(f(x),2r)\right).\]
Since the measure $f_*\nu$ is weakly diametrically regular and by definition of $C$ we obtain
\begin{eqnarray*}
\nu(A_\eps(r))&\leq&\sum_{x\in C}r^{2\eps}r^{-\eps}\nu\left(f^{-1}B\left(f(x),\frac{r}{2}\right)\right)\\
&\leq&r^{\eps}.
\end{eqnarray*}
Finally, since
\[\sum_{n\in\N}\nu(A_\eps(e^{-n}))<+\infty,\]
by the Borel-Cantelli Lemma, for $\nu$-almost every $x\in M_\delta$, it exists $N(x)\in\N$ such that for every $n>N(x)$
\[\tau_{3e^{-n},\alpha}^{\Psi,f}(x)\nu\left( f^{-1}B(f(x),e^{-n})\right)\leq e^{4\eps n}\rho_{5e^{-n}}(f(x))\]
and then
\begin{equation}\label{artoosftheseeqen}\frac{\log \tau_{3e^{-n},\alpha}^{\Psi,f}(x)}{n}\leq\frac{\log\nu\left( f^{-1}B(f(x),e^{-n})\right)}{-n}-\frac{\log \rho_{5e^{-n}}(f(x))}{-n}+4\eps.\end{equation}
Since we can prove easily that for $a>0$
\[\underline{R}^{\Psi,f}_\rho(x)=\liminf_{n\rightarrow+\infty}\frac{\log\tau_{ae^{-n},\rho}^{\Psi,f}(x)}{n}\qquad\textrm{,}\qquad\overline{R}^{\Psi,f}_\rho(x)=\limsup_{n\rightarrow+\infty}\frac{\log\tau_{ae^{-n},\rho}^{\Psi,f}(x)}{n},\]
\[\underset{r\rightarrow0}{\underline\lim}\frac{\log\nu\left( f^{-1}B(f(x),r)\right)}{\log r}-\frac{\log\rho_{r}(f(x))}{\log r}=\underset{n\rightarrow+\infty}{\underline\lim}\frac{\log\nu\left( f^{-1}B(f(x),e^{-n})\right)}{-n}-\frac{\log\rho_{5e^{-n}}(f(x))}{-n},\]
\[\underset{r\rightarrow0}{\overline\lim}\frac{\log\nu\left( f^{-1}B(f(x),r)\right)}{\log r}-\frac{\log\rho_{r}(f(x))}{\log r}=\underset{n\rightarrow+\infty}{\overline\lim}\frac{\log\nu\left( f^{-1}B(f(x),e^{-n})\right)}{-n}-\frac{\log\rho_{5e^{-n}}(f(x))}{-n},\]
and since $\eps$ can be chosen arbitrarily small, the theorem is proved taking the limit superior or the limit inferior when $n\rightarrow+\infty$ in \eqref{artoosftheseeqen}.
\end{proof}

\section{Lower bound of the recurrence rates}
In the section, we are going to prove Theorem~\ref{artoosfthesethrd}, proving in a first time Theorem~\ref{artoosftheseth2}.

Since the function $\tilde{f}$ is Lipschitz, we denote by $\tilde{L}$ its Lipschitz constant. Let $a>0$, $b>0$, $\beta>0$ and $\eta>0$. Let $Y_a:=\{y\in Y\,,\,\underline{d}^{f,g}_\mu(g(y))>a\}$. We define
\begin{eqnarray*}
G_1&=&\left\{y\in Y_a:\,\forall r\leq \eta,\,\mu\left(\pi \tilde{f}^{-1}B\left(\tilde{f}(y),r\right)\right)\leq r^a\right\},\\
G_2&=&\left\{y=(x,s)\in Y_a:\,\forall r\leq \eta,\,\mu\left(B\left(x,\frac{r}{2}\right)\right)\geq r^{N+b}\right\},\\
G_3&=&\left\{y=(x,s)\in Y_a:\,\forall r\leq \eta,\,\mu\left(B\left(x,\frac{r}{2}\right)\right)\geq \mu\left(B\left(x,2r\right)\right)r^{\beta}\right\}.
\end{eqnarray*}
We notice that $G(a,b,\beta,\eta):=G_1\cap G_2\cap G_3$ satisfies
\begin{equation}\label{artoosfthesemug2}
{\mu\otimes Leb}(G(a,b,\beta,\eta))\underset{\eta\rightarrow0}{\longrightarrow}{\mu\otimes Leb}(Y_a).
\end{equation}
Indeed, by definition of $\underline{d}^{f,g}_\mu$, we have ${\mu\otimes Leb}(G_1)\rightarrow{\mu\otimes Leb}(Y_a)$. Moreover, since $\overline{d}_\mu\leq N$ $\mu$-almost everwhere, ${\mu\otimes Leb}(G_2)\rightarrow {\mu\otimes Leb}(Y_a)$, and since the measure $\mu$ is weakly diametrically regular, ${\mu\otimes Leb}(G_3)\rightarrow{\mu\otimes Leb}(Y_a)$.

Let $\alpha>0$, we define
\[Y(\alpha,a)=\{y=(x,s)\in Y_a\,:\,\alpha<s<\phi(x)-\alpha\}.\]
We consider the set $G=G(a,b,\beta,\eta)\cap Y(\alpha,a)$. Following the ideas of P\`ene and Saussol  \cite{MR2566164} for return times in billiard, we will use a special cover of $Y$. Since $X$ is compact, it exists, for $r>$ small enough, a finite subset $E=(m_i)_{i\in I}\subset X$ and a finite sequence $(s_{ij})_{(i,j)\in I\times J}\subset\R$ such that $\left\{P_{ij}(r):=\left\{\Phi_s(B(m_i,r)\times\{s_{ij}\}),0\leq s\leq r\right\}\right\}_{(i,j)\in I\times J}$ satisfies
\begin{enumerate}
\item $(m_i,s_{ij})\in G$ for every ${(i,j)\in I\times J}$;
\item $G\subset\underset{(i,j)\in I\times J}{\bigcup}P_{ij}(r)$;
\item $B(m_{i_1},\frac{r}{2})\cap B(m_{i_2},\frac{r}{2})=\emptyset$ for every $i_1\neq i_2$;
\item $s_{ij}\in(0,\phi(m_i))$ for every $i,j$;
\item $Y(\alpha,a)\subset \underset{i,j}{\bigcup}P_{ij}(r)\subset Y(\frac{\alpha}{2},a)$;
\item $P_{ij}(\frac{r}{2})\cap P_{kl}(\frac{r}{2})=\emptyset$ for every $(i,j)\neq (k,l)$.
\end{enumerate}
Let $r\leq\eta$, we define:
\[A_r^{\tilde{f}}(y)=\left\{x\in X\,:\,\exists t\in[0,\phi(x)[\,,\,\Phi_t(x,0)\in \tilde{f}^{-1}B(\tilde{f}(y),r)\right\}.\]

\begin{lem}\label{artoosfthesedoc}Under the hypothesis of Theorem \ref{artoosftheseth2}, for every $(x,s)\in Y$, for every $n\in \N$, for every $K>0$ and for every $r>0$ we have
\[\mu\left(B(x,r)\cap T^{-n}A_{Kr}^{\tilde{f}}(x,s)\right)\leq \frac{\tilde{L}c}{K}\frac{1}{r^2}\theta_n+\mu\left(B(x,2r)\right)\mu\left(A_{2Kr}^{\tilde{f}}(x,s)\right)\]
where $c$ is a strictly positive constant depending only on the different metrics.
\end{lem}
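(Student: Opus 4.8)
The plan is to read off the inequality as a decorrelation estimate for the base system $(X,T,\mu)$: approximate the indicator functions $\mathbf{1}_{B(x,r)}$ and $\mathbf{1}_{A_{Kr}^{\tilde f}(x,s)}$ from above by Lipschitz functions of controlled norm, apply the super-polynomial decay of correlations of $T$, and keep only the resulting product term together with the correlation error $\theta_n$.

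First I would fix $(x,s)\in Y$ and choose a Lipschitz function $\varphi_2:X\to[0,1]$ with $\mathbf{1}_{B(x,r)}\le\varphi_2\le\mathbf{1}_{B(x,2r)}$ whose Lipschitz constant is at most $c_1/r$ — the tent function $\varphi_2(\cdot)=\max\{0,\min\{1,2-d(x,\cdot)/r\}\}$ works. The less routine object is a Lipschitz function $\varphi_1:X\to[0,1]$ with
\[\mathbf{1}_{A_{Kr}^{\tilde f}(x,s)}\le\varphi_1\le\mathbf{1}_{A_{2Kr}^{\tilde f}(x,s)}\]
and Lipschitz constant at most $\tilde L c_2/(Kr)$. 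I would take $\varphi_1(x')=\max\{0,1-\tfrac{\tilde L c_2}{Kr}d(x',A_{Kr}^{\tilde f}(x,s))\}$, so that the one thing to verify is that the neighbourhood of $A_{Kr}^{\tilde f}(x,s)$ of radius $Kr/(\tilde L c_2)$ is contained in $A_{2Kr}^{\tilde f}(x,s)$. This is where the Lipschitz isomorphism $g$ and the Bowen–Walters metric enter: comparing the Bowen–Walters distance on $Y$ with the metric on $X$ (this comparison supplies the metric constant $c$ of the statement), if $x''$ is $\delta$-close to some $x'\in A_{Kr}^{\tilde f}(x,s)$ then the fibre over $x''$ stays within Bowen–Walters distance $c_2\delta$ of the fibre over $x'$, hence $\tilde f=f\circ g$, which is $\tilde L$-Lipschitz, sends some point of the $x''$-fibre into $B(\tilde f(x,s),2Kr)$ as soon as $\delta<Kr/(\tilde L c_2)$; that is, $x''\in A_{2Kr}^{\tilde f}(x,s)$.

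With $\varphi_1,\varphi_2$ at hand I would simply estimate
\[\mu\bigl(B(x,r)\cap T^{-n}A_{Kr}^{\tilde f}(x,s)\bigr)=\int_X\bigl(\mathbf{1}_{A_{Kr}^{\tilde f}(x,s)}\circ T^n\bigr)\mathbf{1}_{B(x,r)}\,d\mu\le\int_X(\varphi_1\circ T^n)\varphi_2\,d\mu,\]
and then apply the super-polynomial decay of correlations to the pair $(\varphi_1,\varphi_2)$:
\[\int_X(\varphi_1\circ T^n)\varphi_2\,d\mu\le\int_X\varphi_1\,d\mu\int_X\varphi_2\,d\mu+\|\varphi_1\|\|\varphi_2\|\theta_n\le\mu\bigl(B(x,2r)\bigr)\mu\bigl(A_{2Kr}^{\tilde f}(x,s)\bigr)+\frac{\tilde L c}{Kr^2}\theta_n,\]
using $\varphi_1\le\mathbf{1}_{A_{2Kr}^{\tilde f}(x,s)}$, $\varphi_2\le\mathbf{1}_{B(x,2r)}$, and the bound $\|\varphi_1\|\|\varphi_2\|\le\tilde L c/(Kr^2)$ obtained by collecting the metric constants $c_1,c_2$ into a single constant $c$. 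This is precisely the asserted inequality.

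The main obstacle is the neighbourhood-containment step: one must control how the fibres $\{x'\}\times[0,\phi(x'))$ of the suspension move in the Bowen–Walters metric when the base point $x'$ varies in $X$. This is straightforward in the interior of a fibre, where $(x',t)$ and $(x'',t)$ are at Bowen–Walters distance $O(d(x',x''))$ by Lipschitz continuity of $T$, but near the top of a fibre one has to pass through the identification $(x',\phi(x'))\sim(Tx',0)$, which forces one to use Lipschitz continuity of $g$ (and enough regularity of the roof function $\phi$) to chain the estimates. Checking that this can be done uniformly, and that the resulting constant depends only on the metrics, is the delicate part of the argument.
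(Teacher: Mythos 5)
Your proposal is correct and follows essentially the same route as the paper: sandwich the two indicator functions between Lipschitz bump functions with controlled Lipschitz constants, apply the super-polynomial decay of correlations to the pair, and absorb the metric-comparison factors into the constant $c$. The only cosmetic difference is where you place the metric comparison: the paper defines ${h'}_{(x,s),r}(u)=\sup_{t\in[0,\phi(u))}\max\{0,1-\tfrac{1}{Kr}d(B(\tilde f(x,s),Kr),\tilde f(u,t))\}$ — a tent in the image $\R^N$ pulled back through $\tilde f$ and maximized over the fibre — so that the sandwich $\mathbf 1_{A_{Kr}}\le h'\le\mathbf 1_{A_{2Kr}}$ is immediate and the nontrivial step is the Lipschitz bound on $X$ (which it offloads to Prop.~17 of \cite{MR1796025}); you instead define a tent in $X$ around the set $A_{Kr}^{\tilde f}(x,s)$, so that the Lipschitz bound is automatic and the nontrivial step is the neighbourhood containment $A_{Kr}^{\tilde f}\subset A_{2Kr}^{\tilde f}$ under a $Kr/(\tilde L c_2)$-enlargement. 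Both hinge on exactly the same comparison of the base metric with the Bowen--Walters metric across fibres (including near the identification at the roof, where Lipschitz regularity of $\phi$ is used), which is where the constant $c$ comes from in either version; you correctly identify this as the delicate point.
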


\begin{proof}
Let $(x,s)\in Y$ and $r>0$. Let ${h}_{x,r}$ and ${h'}_{(x,s),r}$ defined as follow
\[\begin{array}{cccl}
h_{x,r}:&X&\longrightarrow&\R\\
 &u&\longrightarrow &h(u)=\max\left\{0,1-\frac{1}{r}d(u,B(x,r))\right\}
 \end{array}\]
and
 \[\begin{array}{cccl}
{h'}_{(x,s),r}:&X&\longrightarrow&\R\\
 &u&\longrightarrow&\underset{t\in[0,\phi(u)[}{\sup}\max\left\{0,1-\frac{1}{Kr}d(B(\tilde{f}(x,s),Kr),\tilde{f}(u,t))\right\}.
\end{array}\]
$h_{x,r}$ is $\frac{1}{r}$-Lipschitz and ${h'}_{(x,s),r}$ is $\frac{\tilde{L}c}{Kr}$-Lipschitz with $c$ the constant given by Proposition~17 of \cite{MR1796025}. Moreover, we have $1_{B(x,r)}\leq h_{x,r}\leq 1_{B(x,2r)}$ and $1_{A_{Kr}^{\tilde{f}}(x,s)}\leq {h'}_{(x,s),r}\leq 1_{A_{2Kr}^{\tilde{f}}(x,s)}$. Since the decay of correlations of $(X,T,\mu)$ is super-polynomial,we obtain
\begin{eqnarray*}
\mu\left(B(x,r)\cap T^{-n}A_{Kr}^{\tilde{f}}(x,s)\right)&\leq&\int_X h_{x,r}(u){h'}_{(x,s),r}(T^nu)d\mu(u)\\
&\leq& \|h_{x,r}\|\|{h'}_{(x,s),r}\|\theta_n+\int_X h_{x,r}d\mu\int_X {h'}_{(x,s),r}d\mu\\
&\leq& \frac{\tilde{L}c}{K}\frac{1}{r^2}\theta_n+\mu\left(B(x,2r)\right)\mu\left(A_{2Kr}^{\tilde{f}}(x,s)\right).
\end{eqnarray*}
\end{proof}


\begin{lem}\label{artoosfthesepositive}Under the hypothesis of Theorem \ref{artoosftheseth2},\[\underline{R}_\star^{\Phi,\tilde{f}}(y)> 0\textrm{  for ${\mu\otimes Leb}$-almost every $y$ such that }\underline{d}^{f,g}_\mu(g(y))>0.\]
\end{lem}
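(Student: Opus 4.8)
The plan is to prove that $\mathcal Z:=\{y\in Y:\underline d^{f,g}_\mu(g(y))>0\text{ and }\underline R_\star^{\Phi,\tilde f}(y)=0\}$ is $\mu\otimes Leb$-null, which is exactly the statement since $\underline R_\star^{\Phi,\tilde f}\ge0$ always. Since $\{\underline d^{f,g}_\mu(g(\cdot))>0\}=\bigcup_{a\in\Q,\,a>0}Y_a$, and since by \eqref{artoosfthesemug2} together with $\mu\otimes Leb(Y(\alpha,a))\uparrow\mu\otimes Leb(Y_a)$ as $\alpha\to0$ the sets $G(a,b,\beta,\eta)\cap Y(\alpha,a)$ exhaust $Y_a$ in measure, I would fix $a>0$, then $0<\beta<a$, $b>0$, $\alpha,\eta>0$, and prove $\underline R_\star^{\Phi,\tilde f}(y)>0$ for $\mu\otimes Leb$-a.e. $y$ in $G:=G(a,b,\beta,\eta)\cap Y(\alpha,a)$; letting $\eta,\alpha\to0$ and then $a\downarrow0$ finishes.

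The key point is to express ``$\underline R_\star^{\Phi,\tilde f}(y)=0$'' as a Borel--Cantelli event indexed by the \emph{floor number}, with the scale slaved to that index. Fixing $\delta\in(0,a-\beta)$ and putting $\rho_n:=n^{-1/\delta}$, I set
\[V_n:=\bigl\{y=(u,s)\in G:\ T^n u\in A^{\tilde f}_{\rho_n}(u,s)\bigr\}.\]
If $\tilde f\circ\Phi$ sends $y$ into $B(\tilde f(y),\rho_n)$ at a time lying in the $n$-th floor above $y$, then $T^n u\in A^{\tilde f}_{\rho_n}(u,s)$, by the very definitions of $A^{\tilde f}_{\rho_n}$ and of the suspension flow. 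Now if $y\in G\cap\mathcal Z$ then, for every $p$, there are $r\to0$ with $\tau_{r,p}^{\Phi,\tilde f,\star}(y)<r^{-\delta/2}$; using $0<\inf\phi\le\sup\phi<\infty$, the corresponding return time falls in a floor $n$ with $p/\sup\phi-1<n<r^{-\delta}$ once $r$ is small, whence $r<\rho_n$ and $y\in V_n$. Letting $p\to\infty$ forces $n\to\infty$, so $y\in\limsup_n V_n$; hence it remains to show $\sum_n\mu\otimes Leb(V_n)<\infty$.

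For that I would use the special cover $\{P_{ij}(\rho_n)\}_{(i,j)\in I\times J}$ of $G$ with properties (1)--(6) together with Lemma~\ref{artoosfthesedoc}. If $y=(u,s)\in V_n\cap P_{ij}(\rho_n)$ then $u\in B(m_i,\rho_n)$, and since $\tilde f$ is $\tilde L$-Lipschitz and $P_{ij}(\rho_n)$ has Bowen--Walters diameter $\le c_0\rho_n$ (this is where $Y(\alpha,a)$ is needed, to keep the pieces away from the floor identifications), one gets $A^{\tilde f}_{\rho_n}(u,s)\subset A^{\tilde f}_{K\rho_n}(m_i,s_{ij})$ with $K$ a constant depending only on $\tilde L$, $\alpha$ and the metrics; therefore $u\in B(m_i,\rho_n)\cap T^{-n}A^{\tilde f}_{K\rho_n}(m_i,s_{ij})$ and $\mu\otimes Leb(V_n\cap P_{ij}(\rho_n))\le\rho_n\,\mu\bigl(B(m_i,\rho_n)\cap T^{-n}A^{\tilde f}_{K\rho_n}(m_i,s_{ij})\bigr)$. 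Plugging this into Lemma~\ref{artoosfthesedoc}, bounding $\mu(A^{\tilde f}_{2K\rho_n}(m_i,s_{ij}))\le(2K\rho_n)^a$ via $G_1$ (legitimate once $2K\rho_n\le\eta$), $\#(I\times J)\le 2\rho_n^{-(N+b+1)}$ via $G_2$ and the disjointness (6), and $\sum_{ij}\rho_n\,\mu(B(m_i,2\rho_n))\le2\rho_n^{-\beta}$ via $G_3$ and (6), and summing over $(i,j)$, I would obtain
\[\mu\otimes Leb(V_n)\ \le\ \frac{2\tilde Lc}{K}\,\theta_n\,\rho_n^{-(N+b+2)}+2(2K)^a\,\rho_n^{\,a-\beta}\ =\ C_1\,\theta_n\,n^{(N+b+2)/\delta}+C_2\,n^{-(a-\beta)/\delta},\]
and both series converge: the second because $\delta<a-\beta$ makes the exponent $<-1$, the first because $\theta_n$ decays faster than every power of $n$. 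Borel--Cantelli then gives $\mu\otimes Leb(\limsup_n V_n)=0$, hence $\mu\otimes Leb(\mathcal Z)=0$, which is the claim.

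The delicate point, and the reason one cannot simply run a Borel--Cantelli over a geometric sequence of radii, is the term $\frac{\tilde Lc}{K}r^{-2}\theta_n$ of Lemma~\ref{artoosfthesedoc}: it carries no smallness in $r$, so at any fixed scale its sum over the $\asymp r^{-(N+b+1)}$ pieces of the cover blows up. Slaving the scale to the floor index, $r=\rho_n=n^{-1/\delta}$, converts $r^{-2}$ into the fixed power $n^{(N+b+2)/\delta}$, which the super-polynomial decay of $\theta_n$ then absorbs --- this substitution is the heart of the argument. (It also uses $0<\inf\phi\le\sup\phi<\infty$, valid for the symbolic suspensions relevant to the applications, so that return times and floor indices are comparable.)
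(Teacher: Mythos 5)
Your proposal is correct and follows essentially the same route as the paper's proof: you exhaust $Y_+$ by the sets $G(a,b,\beta,\eta)\cap Y(\alpha,a)$, define Borel--Cantelli events indexed by the floor number $n$ with the observation radius slaved to $n$ as a negative power $r_n=n^{-c}$, control each event via the special cover $\{P_{ij}(r_n)\}$ and Lemma~\ref{artoosfthesedoc}, and absorb the $\theta_n/r_n^2$ term using the super-polynomial decay. The paper makes the concrete choice $r_n=n^{-4/a}$ with $\beta=a/2$ and directly deduces $\underline R_\star^{\Phi,\tilde f}\ge a/4$ on $G$, whereas you keep a free $\delta\in(0,a-\beta)$ and phrase the conclusion as nullity of $\mathcal Z$; these are cosmetic variants of the same computation.
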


\begin{proof}Let $Y_+:=\{\underline{d}^{f,g}_\mu(g(y))>0\}$. Let $1>\eps>0$ and $a>0$ such that ${\mu\otimes Leb}(Y_+)\geq{\mu\otimes Leb}(Y_a)>{\mu\otimes Leb}(Y_+)-\eps$. Let $\alpha>0$. We fix $b>0$, $\beta=\frac{a}{2}$ and for $\eta>0$ we consider the set $G=G(a,b,\beta,\eta)\cap Y(\alpha,a)$ defined previously.
Let $n_0\in\N$ such that $\forall n\geq n_0$, we have $\eps_n=\frac{1}{n^{4/a}}<\eta$ and we define
\[H_n:=\{y=(x,s)\in Y(\alpha,a)\,:\,T^nx\in A_{r_n}^{\tilde{f}}(y)\}.\]
We consider the set $\left\{P_{ij}(r_n)\right\}_{(i,j)\in I\times J}$ defined previously. Then,
\begin{eqnarray*}
{\mu\otimes Leb}(G\cap H_n)&\leq&\sum_{(i,j)\in I\times J}{\mu\otimes Leb}\left((x,s)\in P_{ij}(r_n):T^nx\in A_{r_n}^{\tilde{f}}(x,s)\right)\\
&\leq&\sum_{(i,j)\in I\times J}{\mu\otimes Leb}\left((x,s)\in P_{ij}(r_n):T^nx\in A_{(1+2\tilde{L}c)r_n}^{\tilde{f}}(m_i,s_{ij})\right).
\end{eqnarray*}
The definition of $P_{ij}(r_n)$ gives us for every ${(i,j)\in I\times J}$
\begin{eqnarray}
& &{\mu\otimes Leb}\left((x,s)\in P_{ij}(r_n):T^nx\in A_{(1+2\tilde{L}c)r_n}^{\tilde{f}}(m_i,s_{ij})\right)\nonumber \\
&=&{\mu\otimes Leb}\left((x,s)\in B(m_i,r_n)\times[s_{ij},s_{ij}+r_n]:T^nx\in A_{(1+2\tilde{L}c)r_n}^{\tilde{f}}(m_i,s_{ij})\right)\nonumber\\
 &=&r_n\mu\left(x\in B(m_i,r_n):T^nx\in A_{(1+2\tilde{L}c)r_n}^{\tilde{f}}(m_i,s_{ij})\right).\label{artoosftheseeqpij}
\end{eqnarray}
Then, by Lemma~\ref{artoosfthesedoc} and \eqref{artoosftheseeqpij}, we obtain
\begin{eqnarray*}
& &{\mu \otimes Leb}(G\cap H_n)\\
&\leq&\sum_{(i,j)\in I\times J}r_n\left[\frac{\tilde{L}c}{2\tilde{L}c+1}\frac{1}{r_n^2}\theta_n+\mu\left(B(m_i,2r_n)\right)\mu\left(A_{2(1+2\tilde{L}c)r_n}^{\tilde{f}}(m_i,s_{ij})\right)\right].
\end{eqnarray*}
By definition of $G$ we have
\begin{eqnarray*}
& &{\mu\otimes Leb}(G\cap H_n)\\
&\leq&\sum_{(i,j)\in I\times J}\left[\frac{\tilde{L}c}{2\tilde{L}c+1}r_n^{-N-b-1}\theta_n+r_n^{1-\beta}(2(1+2\tilde{L}c)r_n)^a\right]\mu\left(B(m_i,\frac{r_n}{2})\right)
\end{eqnarray*}
and by definition of $P_{ij}$
\[{\mu\otimes Leb}(G\cap H_n)\leq\|\phi\|\left[\frac{\tilde{L}c}{2\tilde{L}c+1}r_n^{-N-b-2}\theta_n+r_n^{-\beta}(2(1+2\tilde{L}c)r_n)^a\right].\]
Since $\sum_{n\in\N^*}r_n^{a-\beta}=\sum_{n\in\N^*}\frac{1}{n^2}<+\infty$ and since the decay of correlations is super-polynomial, we have
\[\sum_{n\in\N^*}{\mu\otimes Leb}(G\cap H_n)<+\infty.\]
By the Borel-Cantelli lemma and using (\ref{artoosfthesemug2}), we have for ${\mu\otimes Leb}$-almost every $y=(x,s)\in Y(\alpha,a)$, it exists $n_1(y)$ such that for every $n\geq n_1(y)$, $T^nx\notin A_{\frac{1}{n^{4/a}}}^{\tilde{f}}$, i.e. for every $n\geq n_1(y)$ and for every $t\in[0,\phi(T^nx)[$, $\tilde{f}(T^nx,t)\notin B(\tilde{f}(x,s),\frac{1}{n^{4/a}})$. Then, for ${\mu\otimes Leb}$-almost every $y\in Y(\alpha,a)$, for every $p\geq n_1(y)$ and for every $n\geq n_1(y)$
\begin{equation}
\tau_{\frac{1}{n^{4/a}},p}^{\Phi,\tilde{f},\star}(x,s)>n
\end{equation}
which gives
\begin{eqnarray*}
\underline{R}_\star^{\Phi,\tilde{f}}(y)&=&\lim_{p\rightarrow+\infty}\liminf_{r\rightarrow0}\frac{\log\tau_{r,p}^{\Phi,\tilde{f},\star}(y)}{-\log r}\\
&=&\lim_{p\rightarrow+\infty}\liminf_{n\rightarrow+\infty}\frac{\log\tau_{\frac{1}{n^{4/a}},p}^{\Phi,\tilde{f},\star}(y)}{-\log \frac{1}{n^{4/a}}}\\
&\geq&\lim_{p\rightarrow+\infty}\lim_{n\rightarrow+\infty}\frac{\log n}{\log n^{4/a}}=\frac{a}{4}>0.
\end{eqnarray*}
Since we can choose $\alpha$ and $\eps$ arbitrarily small, the lemma is proved.
\end{proof}

\begin{lem}\label{artoosftheseintervalle} Let $a>0$, $\delta>0$ and $1>\eps>0$. For ${\mu\otimes Leb}$-almost every $y\in Y_a:=\{ \underline{d}^{f,g}_\mu(g(y))>a\}$, it exists $r(y)>0$ such that for every $r \in]0,r(y)[$ and for every $t\in \left[r^{-\delta}, \mu\left(\pi \tilde{f}^{-1}B(\tilde{f}(y),(2\tilde{L}c+1)er)\right)^{-1+\eps}\right]$, we have $d(\tilde{f}(\Phi_ty),\tilde{f}(y))\geq r$.
\end{lem}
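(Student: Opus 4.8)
The plan is to reduce the claim to a statement about the base transformation $T$ and to bound the latter by the decay-of-correlations estimate of Lemma~\ref{artoosfthesedoc} along the special cover of $Y$ used above; combined with Lemma~\ref{artoosfthesepositive} this is the estimate behind the lower bounds of Theorem~\ref{artoosftheseth2}. Fix $a>0$, $0<\eps<1$ and $\delta>0$. It is enough to prove the statement along $r_k=e^{-k}$, $k\to\infty$ (the general $r$ then follows by interpolating between consecutive scales, which is what produces the factor $e$ in the constant of the statement): given $r\in(e^{-(k+1)},e^{-k}]$ one applies the dyadic statement at $r_k=e^{-k}\ge r$, so that $d(\tilde f(\Phi_ty),\tilde f(y))\ge e^{-k}\ge r$, and one checks that the admissible range of $t$ for $r$ is contained in the one for $r_k$ (the lower endpoint because $r^{-\delta}\ge r_k^{-\delta}$, the upper one because $r>e^{-(k+1)}$ keeps the relevant ball around $\tilde f(y)$ at least as large).

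Fix $k$, write $y=(x,s)$ and $\Phi_ty=(T^nx,s_t)$ with $n=n(x,s,t)$ the number of laps of the orbit. If $d(\tilde f(\Phi_ty),\tilde f(y))<r_k$ then $T^nx\in\pi\tilde f^{-1}B(\tilde f(y),r_k)=A_{r_k}^{\tilde f}(y)$, and, $\phi$ being bounded above and below, the constraint on $t$ forces $n$ into a range $[n_1,N_2]$ with $n_1\gtrsim r_k^{-\delta}$ and $N_2\lesssim\mu(\pi\tilde f^{-1}B(\tilde f(y),C_0r_k))^{-1+\eps}$ ($C_0$ the radius constant of the statement; the negligible loss in the exponents of $r_k$ is absorbed in the symbols $\gtrsim,\lesssim$). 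So it suffices, for the good set $G=G(a,b,\beta,\eta)\cap Y(\alpha,a)$ (with $b$ small, $\beta$ chosen below, $\eta,\alpha$ small, and intersected with the full-measure set where $\tilde f(y)$ lies in the support of $\tilde f_*(\mu\otimes Leb)$, so that the $\mu$-measures above are positive), to show that $\mathcal B_k:=\{\,y=(x,s)\in G:\ T^nx\in A_{r_k}^{\tilde f}(y)\ \text{for some}\ n\in[n_1,N_2]\,\}$ obeys $\sum_k(\mu\otimes Leb)(\mathcal B_k)<\infty$; the conclusion then follows from Borel-Cantelli, the convergence \eqref{artoosfthesemug2} and $Y(\alpha,a)\nearrow Y_a$ as $\alpha\to0$.

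To estimate $(\mu\otimes Leb)(\mathcal B_k)$ I would cover $G$ by the family $\{P_{ij}(r_k)\}_{(i,j)\in I\times J}$ with properties (1)--(6). On a cell $P_{ij}(r_k)$ the map $\tilde f$ varies by at most $2\tilde{L}cr_k$ (Lipschitz continuity and Proposition~17 of \cite{MR1796025} for the Bowen-Walters metric); hence the part of $\mathcal B_k$ in that cell lies in $\{x\in B(m_i,r_k):T^nx\in A_{(2\tilde{L}c+1)r_k}^{\tilde f}(m_i,s_{ij})\ \text{for some}\ n\le N_2^{(ij)}\}\times[s_{ij},s_{ij}+r_k]$, where --- and this is the point --- the cap $N_2^{(ij)}$ may be taken uniform over the cell: the Lipschitz bound on the variation of $\tilde f$ gives $\mu(\pi\tilde f^{-1}B(\tilde f(y),C_0r_k))\ge\underline M:=\mu(A_{\rho r_k}^{\tilde f}(m_i,s_{ij}))$ for all $y\in P_{ij}(r_k)$, with $\rho$ a suitable constant chosen both so that this holds and so that $A_{2(2\tilde{L}c+1)r_k}^{\tilde f}(m_i,s_{ij})\subseteq A_{\rho r_k}^{\tilde f}(m_i,s_{ij})$; thus $N_2^{(ij)}\le\underline M^{-1+\eps}$. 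Summing Lemma~\ref{artoosfthesedoc} over $n_1\le n\le N_2^{(ij)}$, the cell contributes at most
\[
r_k\Bigl[\frac{\tilde{L}c}{(2\tilde{L}c+1)\,r_k^{2}}\sum_{n\ge n_1}\theta_n+N_2^{(ij)}\,\mu\bigl(B(m_i,2r_k)\bigr)\,\mu\bigl(A_{2(2\tilde{L}c+1)r_k}^{\tilde f}(m_i,s_{ij})\bigr)\Bigr],
\]
and since $\mu(A_{2(2\tilde{L}c+1)r_k}^{\tilde f}(m_i,s_{ij}))\le\underline M$, the product term is at most $\underline M^{\eps}\mu(B(m_i,2r_k))\le(\rho r_k)^{a\eps}\mu(B(m_i,2r_k))$ by $G_1$, while $\mu(B(m_i,2r_k))\le r_k^{-\beta}\mu(B(m_i,r_k/2))$ by $G_3$. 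Summing over $(i,j)$ and using properties (3),(6) together with $G_2$ to bound $|I\times J|\lesssim r_k^{-(N+b)-1}$ and $\sum_{(i,j)}\mu(B(m_i,r_k/2))\lesssim r_k^{-1}$, one obtains $(\mu\otimes Leb)(\mathcal B_k)\lesssim r_k^{-(N+b)-2}\sum_{n\ge n_1}\theta_n+r_k^{\,a\eps-\beta}$. Taking $\beta=a\eps/2$, and using the super-polynomial decay with $n_1\gtrsim r_k^{-\delta}$ to get $\sum_{n\ge n_1}\theta_n\le C_m r_k^{\delta(m-1)}$ for $m$ so large that $\delta(m-1)>N+b+4$, this is $\lesssim r_k^{2}+r_k^{\,a\eps/2}=e^{-2k}+e^{-ka\eps/2}$, which is summable in $k$.

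The hard part is this last estimate, and within it the fact that the cap $N_2^{(ij)}$ --- although as large as $\mu(\pi\tilde f^{-1}B(\tilde f(y),\cdot))^{-1+\eps}$ --- still keeps the product term of Lemma~\ref{artoosfthesedoc} summable when added over the whole range $n\le N_2^{(ij)}$: this works only because $N_2^{(ij)}\,\mu(A_{2(2\tilde{L}c+1)r_k}^{\tilde f}(m_i,s_{ij}))\le\underline M^{\eps}\to0$, i.e. because the cap is of order $\mu(\pi\tilde f^{-1}B(\tilde f(y),\cdot))^{-1}$ while the enlarged target set has comparable $\mu$-measure (which forces $\rho$, and hence the radius constant in the statement, to be chosen large relative to the Lipschitz enlargement $\tilde{L}c$; the precise value requires keeping careful track of the successive enlargements of the radii). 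At the same time the decay-of-correlations error, summed over the same long range of $n$, must beat the polynomial growth $r_k^{-(N+b)-2}$ of the number of cells, and this succeeds precisely because one sums only over $n\gtrsim r_k^{-\delta}$.
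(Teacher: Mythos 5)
Your proof is correct and follows essentially the same route as the paper: cover the good set $G=G(a,b,\beta,\eta)\cap Y(\alpha,a)$ by the cells $P_{ij}(r_k)$, sum the decay-of-correlations estimate of Lemma~\ref{artoosfthesedoc} over the integer range determined by $t\in[r_k^{-\delta},\mu(\cdot)^{-1+\eps}]$, control the two resulting terms via super-polynomial decay (for $n\gtrsim r_k^{-\delta}$) and via $G_1,G_3$ (which is exactly where $N_2^{(ij)}\mu(A_{\cdot})\lesssim\mu(\cdot)^{\eps}$ produces the $r_k^{a\eps/2}$ term), and finish with $G_2$, the cell count, and Borel--Cantelli along $r_k=e^{-k}$. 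The interpolation to general $r$ with the factor $e$ is also the paper's closing step, though there (and in your sketch) one must track the nested radius constants with some care to keep the required inclusions monotone.
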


\begin{proof}
Let $\alpha>0$. Let $1>\eps>0$. We fix $b>0$, $\beta=\frac{a\eps}{2}$ and for $\eta>0$ we consider the set $G=G(a,b,\beta,\eta)\cap Y(\alpha,a)$. Let $\delta>0$ and $r\leq\eta$, we define:
\begin{eqnarray*}
C_\eps(r):=\bigg\{w\in Y&:& \exists t\in\left[r^{-\delta},\mu\left(\pi \tilde{f}^{-1}B(\tilde{f}(w),(6\tilde{L}c+2)r)\right)^{-1+\eps}\right]\\ 
 & & \textrm{such that }d(\tilde{f}(\Phi_tw),\tilde{f}(w))<r\bigg\}.
 \end{eqnarray*}
For $x\in G$ and $s>0$, we define $B_{r,s}(x):=B(x,r)\times[s,s+r]$ and defining $I_r=\left[\lfloor r^{-\delta}\rfloor,\mu\left(\pi \tilde{f}^{-1}B(\tilde{f}(x,s),(4\tilde{L}c+2)r)\right)^{-1+\eps}\right]\cap\N$ we can prove that we have:
\begin{equation}\label{artoosftheseeqcomplique}
B_{r,s}(x)\cap C_\eps(r)\subset\underset{I_r}{\bigcup}B(x,r)\cap T^{-n}A_{(2\tilde{L}c+1)r}^{\tilde{f}}(x,s)\times[s,s+r].
\end{equation}
Indeed, let us choose an element $w$ of $B_{r,s}(x)\cap C_\eps(r)$. By definitition, this means that $w\in B_{r,s}(x)$ and that it exists $t\in[r^{-\delta},\mu(\{\pi \tilde{f}^{-1}B(\tilde{f}(w),(6\tilde{L}c+2)r)\})^{-1+\eps}]$ such that $d(\tilde{f}(\Phi_tw),\tilde{f}(w))<r$. 

But noticing that $B(\tilde{f}(x,s),(4\tilde{L}c+2)r)\subset B(\tilde{f}(w),(6\tilde{L}c+2)r)$, the previous time $t$ is smaller than $\mu(\{\pi \tilde{f}^{-1}B(\tilde{f}(x,s),(4\tilde{L}c+2)r)\})^{-1+\eps}$. 

Now, since we work with a suspension flow over an application $T$, we denote $w=(v,u)$ and we can divide our interval in which $t$ exists in time intervals of length $1$ and with integer bounds.

Then, if $(v,u)\in B_{r,s}(x)$ then it exists an integer $n\in I_r$ and it exists a time $t\in[0,\phi(T^nv)[$ such that we have $d(\tilde{f}(T^nv,t)),\tilde{f}(v,u))<r$. 

Noticing that $\tilde{f}$ is Lipschitz and that $(v,u)\in B_{r,s}(x)$, we can see that for the previous $n$ and $t$ we also have $d(\tilde{f}(T^nv,t)),\tilde{f}(x,s))<(2\tilde{L}c+1)r$. This implies that $v\in T^{-n}A_{(2\tilde{L}c+1)r}^{\tilde{f}}(x,s)$ and so we obtain equation \eqref{artoosftheseeqcomplique}.

Let $k>1$ such that $\delta(k-1)-1\geq N+2b$ and let $\eta>0$ such that $n\geq\eta^{-\delta}$implies $(k-1)(n+1)^{-k}\geq\theta_n$ (which is possible by definition of $\theta_n$). We have by Lemma~\ref{artoosfthesedoc}
\begin{eqnarray*}
& &{\mu\otimes Leb}\left(B_{r,s}(x)\cap C_\eps(r)\right)\\
&\leq&r\underset{n\in I_r}{\sum}\left[\frac{\tilde{L}c}{(2\tilde{L}c+1)r^2}\theta_n+\mu\left(B(x,2r)\right)\mu\left(A_{2(2\tilde{L}c+1)r}^{\tilde{f}}(x,s)\right)\right]\\
&\leq&\frac{\tilde{L}c}{(2\tilde{L}c+1)}r^{\delta(k-1)-1}+r\mu(B(x),2r))\mu\left(A_{2(2\tilde{L}c+1)r}^{\tilde{f}}(x,s)\right)^{\eps}
\end{eqnarray*}
and by definition of $G$
\begin{eqnarray*}
{\mu\otimes Leb}\left(B_{r,s}(x)\cap C_\eps(r)\right)&\leq&\frac{\tilde{L}c}{(2\tilde{L}c+1)} r^{N+2b}+(4\tilde{L}c+2)^{a\eps}r^{1+\frac{a\eps}{2}}\mu(B(x,\frac{r}{2}))\\
&\leq& \mu(B(x,\frac{r}{2}))\left(\frac{\tilde{L}c}{2\tilde{L}c+1}r^b+(4\tilde{L}c+2)^{a\eps}r^{1+\frac{a\eps}{2}}\right).
\end{eqnarray*}
We consider the set $\left\{P_{ij}(r)\right\}_{(i,j)\in I\times J}$ defined previously. We have
\begin{eqnarray*}
{\mu\otimes Leb}(G\cap C_\eps(r))&\leq&\sum_{(i,j)\in I\times J}{\mu\otimes Leb}(C_\eps(r)\cap P_{ij}(r))\\
&\leq&\sum_{(i,j)\in I\times J}\mu(B(m_{i},\frac{r}{2}))\left(\frac{\tilde{L}c}{2\tilde{L}c+1}r^b+(4\tilde{L}c+2)^{a\eps}r^{1+\frac{a\eps}{2}}\right)\\
&\leq&\sum_{j\in J}\left(\frac{\tilde{L}c}{2\tilde{L}c+1}r^b+(4\tilde{L}c+2)^{a\eps}r^{1+\frac{a\eps}{2}}\right)\\
&\leq&\frac{\|\phi\|}{r}\left(\frac{\tilde{L}c}{2\tilde{L}c+1}r^b+(4\tilde{L}c+2)^{a\eps}r^{1+\frac{a\eps}{2}}\right)\\
&\leq&\frac{\tilde{L}c}{2\tilde{L}c+1}r^{b-1}+(4\tilde{L}c+2)^{a\eps}r^{\frac{a\eps}{2}}.
\end{eqnarray*}
Then, choosing $b=1$, we obtain
\[\sum_{k\in\N}{\mu\otimes Leb}(G\cap C_\eps(e^{-k}))<+\infty.\]
And so, by the Borel-Cantelli lemma, for ${\mu\otimes Leb}$-almost every $y\in G$, it exists $n_1(y)$ such that for every $k\geq n_1(y)$, $y\notin C_\eps(e^{-k})$. Then, for $r$ small enough, it exists $k\in\N$ such that $e^{-k-1}<r\leq e^{-k}\leq e^{-n_1(y)}$. Moreover, since $e^{\delta k}\leq r^{-\delta}$ and $(2\tilde{L}c+1)e^{-k}<(2\tilde{L}c+1)er$, it does not exist integer $t\in[r^{-\delta},\mu\left(\pi \tilde{f}^{-1}B(\tilde{f}(y),(2\tilde{L}c+1)er)\right)^{-1+\eps}]$ such that $d(\tilde{f}(\Phi_ty),\tilde{f}(y))\geq r$. The lemma is proved choosing $\alpha$ arbitrarily small. \end{proof}
%
%
%
%
\begin{proof}[Proof of Theorem~\ref{artoosftheseth2}]Let $\zeta>0$. Since $ \underline{R}_\star^{\Phi,\tilde{f}}(y)>0$ for ${\mu\otimes Leb}$-almost every $y\in Y_+=\{\underline{d}^{f,g}_\mu(g(y))>0\}$ by Lemma~\ref{artoosfthesepositive}, it exists $a>0$ such that ${\mu\otimes Leb}(Y_+)\geq{\mu\otimes Leb}(\{ \underline{R}_\star^{\Phi,\tilde{f}}(y)>a\})>{\mu\otimes Leb}(Y_+)-\zeta$. For every $y\in\{\underline{R}_\star^{\Phi,\tilde{f}}(y)>a\}$ , for $p$ large enough and for $r$ small enough, we have
\[\tau_{r,p}^{\Phi,\tilde{f},\star}(y)\geq r^{-a}.\]
Thanks to Lemma~\ref{artoosftheseintervalle} choosing $\delta=a$ and $\eps>0$, for ${\mu\otimes Leb}$-almost every $y\in \{\underline{R}_\star^{\Phi,\tilde{f}}(y)>a\}$, if $r$ is small enough and $p$ is large enough, then $\tau_{r,p}^{\Phi,\tilde{f},\star}(y)\geq \mu\left(\pi \tilde{f}^{-1}B(\tilde{f}(y),(2\tilde{L}c+1)er)\right)^{-1+\eps}$. Then, ${\mu\otimes Leb}$-almost everywhere on $\{\underline{R}_\star^{\Phi,\tilde{f}}(y)>a\}$ we have $\underline{R}_\star^{\Phi,\tilde{f}}(y)\geq(1-\eps)\underline{d}^{f,g}_\mu(g(y))$ and $\overline{R}_\star^{\Phi,\tilde{f}}(y)\geq(1-\eps)\overline{d}^{f,g}_\mu(g(y))$. The theorem is proved choosing $\eps>0$ arbitrarily small and then $\zeta>0$ arbitrarily small. \end{proof}

\begin{proof}[Proof of Theorem \ref{artoosfthesethrd}]
It exists $Y_g\subset Y$ such that $\mu\otimes Leb(Y_g)=\nu (g(Y_g))=1$ and such that $g$ is one-to-one on $Y_g$. Then for $\nu$-almost every $x\in M$, it exists $y\in Y_g$ such that $x=g(y)$ and noticing that
\[\underline{R}_\star^{\Psi,f}(x)=\underline{R}_\star^{\Phi,\tilde{f}}(y)\]
and that
 \[\overline{R}_\star^{\Psi,f}(x)=\overline{R}_\star^{\Phi,\tilde{f}}(y),\]
the theorem is proved using Theorem \ref{artoosftheseth2}.

\end{proof}


\begin{proof}[Proof of Corollary \ref{artoosfthesecoroid}] Let $x\in M$ a non-fixed point. We have already seen in the proof of Corollary~\ref{artoosfthesecorid}, that by the flow box theorem, it exists a neightboorhood $U$ of $x$, $\beta>0$, $\gamma_1>0$ and $\gamma_2>0$ such that for every $0<t\leq\beta$ and for every $z\in U$, $ \gamma_2 t\geq d(z,\Psi_t(z))\geq \gamma_1 t$. Let $\frac{\beta}{2}>r>0$ such that $B(x,r)\subset U$.
\begin{eqnarray*}
\nu\left(B(x,r)\right)&=&\mu\otimes Leb(g^{-1}B(x,r))\\
&=&\int_X\int_{0}^{\phi(u)}\bold{1}_{B(x,r)}(g(u,t))dt\,d\mu(u).
\end{eqnarray*}
Let $u\in X$ such that it exists $t\in(0,\phi(u))$ satisfying $g(u,t)\in B(x,r)$ then since $r<\frac{\beta}{2}$, for every $s\in [\frac{2r}{\gamma_1},\beta]$ we have $g(u,t+s)\notin B(x,r)$. Indeed, 
\[g(u,t+s)=g(\Phi_{s}(u,t))=\Psi_{s}(g(u,t))\]
and so
\begin{eqnarray*}
d(x, g(u,t+s))&\geq& d(g(u,t),g(u,t+s))-d(x,g(u,t))\\
&\geq& d(g(u,t),\Psi_{s}g(u,t))-r\\
&\geq& \gamma_1 s-r\geq 2r-r=r.
\end{eqnarray*}
Then,
\begin{equation}\label{artoosftheseinegboulepi1}
\nu\left(B(x,r)\right)\leq 2r\frac{\sup_u\phi(u)}{\gamma_1}\mu\left(\pi g^{-1}B(x,r)\right).\end{equation}
Moreover, we observe that
\begin{eqnarray*}
\nu\left(B(x,2r)\right)&=&\int_X\int_{0}^{\phi(u)}\bold{1}_{B(x,2r)}(g(u,t))dt\,d\mu(u)\\
&\geq& \int_{\pi  g^{-1}B(x,r)}\int_{0}^{\phi(u)}\bold{1}_{B(x,2r)}(g(u,t))dt\,d\mu(u).
\end{eqnarray*}
Let $u\in \pi g^{-1}B(x,r)$, it exists $t\in (0,\phi(u))$ such that $(u,t)\in g^{-1}B(x,r)$. Then, if $r$ is small enough 
\begin{eqnarray*}
d(g(u,t+\frac{r}{\gamma_2}),x)&\leq&d(g(u,t+\frac{r}{\gamma_2}),g(u,t))+d(g(u,t),x)\\
&\leq&d(\Psi_\frac{r}{\gamma_2}(g(u,t)),g(u,t))+r\\
&\leq& r+r=2r
\end{eqnarray*}
which gives
\begin{equation}\label{artoosftheseinegpiboule1}
\nu\left(B(x,2r)\right)\geq \frac{r}{\gamma_2}\mu(\pi g^{-1}B(x,r)).
\end{equation}
Using \eqref{artoosftheseinegboulepi1} and\eqref{artoosftheseinegpiboule1}, we obtain
\begin{equation}\label{artoosfthesednuegaldplusun}
\underline{d}_{\nu}(x)=\underset{r\rightarrow0}{\underline\lim}\frac{\log\nu\left(B(x,r)\right)}{\log r}=\underset{r\rightarrow0}{\underline\lim}\frac{\log\mu\left(\pi g^{-1}B(x,r)\right)}{\log r}+1=\underline{d}^{id,g}_\mu(x)+1
\end{equation}
and
\begin{equation}\label{artoosfthesednuegaldplusun2}
\overline{d}_{\nu}(x)=\underset{r\rightarrow0}{\overline\lim}\,\frac{\log\nu\left(B(x,r)\right)}{\log r}=\underset{r\rightarrow0}{\overline\lim}\,\frac{\log\mu\left(\pi g^{-1}B(x,r)\right)}{\log r}+1=\overline{d}^{id,g}_\mu(x)+1.
\end{equation}
Since for every $t\leq\beta$ and for every $y\in M$, $d(y,\Psi_t(y))\geq \gamma t$, then, for every $r$ small enough, $\tau_{r}^{\Psi}(x)\geq\gamma\beta$. And so, for $\nu$-almost every $x\in M$ which is not periodic, $\tau_{r}^{\Psi}(x)\rightarrow+\infty$ when $r\rightarrow0$. Let $p\in\N^*$. Then for every point $x$ non-periodic it exists $r(p,x)>0$ such that for every $0<r<r(p,x)$, $\tau_{r}^{\Psi}(x)>p$ which implies that $\tau_{r}^{\Psi}(x)=\tau_{r,p}^{\Psi,id,\star}(x)$. We obtain
\begin{equation}\label{artoosftheseregalrstar}
\underline{R}^{\Psi}(x)=\underline{R}^{\Psi,id}_\star(x)\qquad \textrm{and} \qquad \overline{R}^{\Psi}(x)=\overline{R}^{\Psi,id}_\star(x).
\end{equation}
Finally, the corollay is proved by \eqref{artoosfthesednuegaldplusun}, \eqref{artoosfthesednuegaldplusun2}, \eqref{artoosftheseregalrstar}, Corollary~\ref{artoosfthesecorid} and Theorem~\ref{artoosfthesethrd}. 
\end{proof}
\subsection*{Aknowledgements}I would like to thank Beno\^it Saussol for his help, his comments and all the fruitful discussions we had during the preparation of this work.

\bibliographystyle{siam} 
\bibliography{biblio}

\end{document}